\numberwithin{equation}{section}
\def\M{\mathcal{M}}
\def\C{\mathcal{C}}
\def\E{\mathcal{E}}
\def\mcO{\mathcal{O}}
\def\V{\mathcal{V}}
\def\Y{\mathcal{Y}}
\def\W{\mathcal{W}}
\def\Z{\mathcal{Z}}
\def\PP{\mathbb{P}}
\def\CC{\mathbb{C}}
\def\ov#1{\overline{#1}}
\def\Pic{\operatorname{Pic}}
\def\rk{\operatorname{rk}}
\def\ext{{\E}xt}
\def\oc2{\mathcal{O}_{\C_2}}
\newtheorem{thm}{Theorem}[section]
\newtheorem{cor}[thm]{Corollary}
\newtheorem{lem}[thm]{Lemma}
\newtheorem{prop}[thm]{Proposition}
\newtheorem{conj}[thm]{Conjecture}
\theoremstyle{definition}
\newtheorem{rem}{Remark}
\newcommand{\be}{\begin{equation}}
\newcommand{\ee}{\end{equation}}
\begin{document}

\title {Irreducibility of Severi varieties on $K3$ surfaces}

\author{Andrea Bruno}
\address{Andrea Bruno: Dipartimento di Matematica e Fisica, Universit\`a Roma Tre
\hfill \newline\texttt{}  \indent Largo San Leonardo Murialdo 1-00146 Roma, Italy} \hfill \newline\texttt{}
 \email{{\tt andrea.bruno@uniroma3.it}}

\author{Margherita Lelli-Chiesa}
\address{Margherita Lelli-Chiesa: Dipartimento di Matematica e Fisica, Universit\`a Roma Tre
\hfill \newline\texttt{}  \indent Largo San Leonardo Murialdo 1-00146 Roma, Italy} \hfill \newline\texttt{}
 \email{{\tt margherita.lellichiesa@uniroma3.it}}
\begin{abstract}
Let $(Y,L)$ be a general primitively polarized $K3$ surface of genus $g$. For every $0\leq \delta \leq g$ we consider the Severi variety parametrizing integral curves in $|L|$  with exactly $\delta$ nodes as singularities. We prove that its closure in $|L|$ is connected as soon as $\delta\leq g-1$. If $\delta\leq g-4$, we obtain the stronger result that the Severi variety is irreducible, as predicted by a well-known conjecture. The results are obtained by degeneration to Halphen surfaces.
\end{abstract}

\maketitle
\section{Introduction.}

Let $L$ be a polarization on a smooth irreducible projective surface $S$ defined over the field of complex numbers, and denote by $g$ the arithmetic genus of all curves in $|L|$. For any fixed integer $0\leq \delta \leq g$ the {\em Severi variety} of $\delta$-nodal curves in $|L|$ is the  locally closed subscheme of $|L|$ defined as
$$
V_{\delta}(S, L):=\left\{ C\in |L|\textrm{ s.t. }C \textrm{ is integral with exactly }\delta\textrm{ nodes as singularities}   \right\}\!\!;
$$
the same definition applies to singular surfaces $S$ with the further requirement that the curves $C$ lie in the smooth locus of $S$.
These varieties are named after Severi, who introduced them in the case $S=\PP^2$ \cite{Se}, where he proved that they are nonempty and smooth of the expected dimension, namely, $\dim |L|-\delta$. Severi also claimed that they are irreducible, but a rigorous proof of this fact was accomplished only some sixty years later by Harris \cite{Ha}. Since then, Severi varieties were thoroughly investigated for many types of surfaces, in particular as regards their nonemptiness, their local geometry and their irreducibility; the last issue became known as the {\it Severi problem}. Nonemptiness has been established in many cases, as for instance $K3$ surfaces \cite{MM, Ch1}, abelian surfaces \cite{KLM,KL}, Enriques surfaces \cite{CDGK}.
As concerns their local geometry, Severi varieties behave well on rational surfaces and surfaces of Kodaira dimension $0$, while on surfaces of general type wild unexpected phenomena occur, as highlighted in \cite{CS,CC}. 

On the other hand, very little is known about the global geometry of Severi varieties even for surfaces of non-maximal Kodaira dimension. In particular, the Severi problem proves very challenging  and has been solved in very few cases: for Hirzebruch surfaces  by Tyomkin \cite{Tyo}, for Del Pezzo surfaces  in the case of rational curves (that is, for maximal $\delta$) by Testa \cite{Tes}, while partial results for blow-ups of the projective plane are due to Greuel-Lossen-Shustin \cite{GLS}. In recent times, many papers focused on the case of toric surfaces \cite{Bo,LT}, and Zahariuc \cite{Za} worked out the Severi problem for a general abelian surface with a polarization of any primitive type. 

A vast literature is devoted to the case of $K3$ surfaces, motivated by the following  well-known folklore conjecture.
\begin{conj}\label{uno}
Let $(Y,L)$ be a general polarized $K3$ surface of genus $g\geq 2$. Then, for any fixed $0\leq \delta\leq g-1$, the Severi variety $V_{\delta}(Y,L)$ is irreducible.
\end{conj}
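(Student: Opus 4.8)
The plan is to combine the good local behaviour of Severi varieties on surfaces of Kodaira dimension zero with a degeneration-and-regeneration argument. Since $(Y,L)$ is a general polarized $K3$ surface, every curve in $V_\delta(Y,L)$ is expected to be unobstructed, so that $V_\delta(Y,L)$ is smooth of the expected dimension $\dim|L|-\delta=g-\delta$; consequently its irreducible components coincide with its connected components, and it suffices to prove that $V_\delta(Y,L)$ is connected. The subtle point, which I would flag at the outset, is that connectedness of the \emph{open} stratum $V_\delta$ is genuinely stronger than connectedness of its closure $\overline{V_\delta}$ in $|L|$: two distinct components of $V_\delta$ could a priori meet only along the boundary locus $\overline{V_\delta}\setminus V_\delta$, where nodes collide or further singularities appear. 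Upgrading connectedness of the closure to connectedness of the open part is therefore the conceptual core of the problem.

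To prove connectedness I would degenerate $(Y,L)$ to the central fibre of a one-parameter semistable (Type II) degeneration $\mathcal{X}\to\Delta$ with smooth total space, whose special fibre $X_0=V_0\cup_E V_1$ is a union of two Halphen surfaces glued transversally along a common anticanonical elliptic curve $E$. The polarization specializes to a line bundle $L_0$ restricting to the relevant anticanonical multiple on each $V_i$, and one forms the relative Severi variety $\mathcal{V}_\delta\to\Delta$. The point of passing to $X_0$ is that nodal curves on the rational surfaces $V_i$ are far better understood than on a $K3$: here one can attempt to establish connectedness of the limiting Severi variety by induction on $\delta$, degenerating further within each $V_i$ and invoking transitivity of the monodromy action on the $\delta$ nodes, in the spirit of Harris's treatment of $\PP^2$.

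The heart of the argument is then the regeneration step: one must show that $\mathcal{V}_\delta$ is smooth, or at worst mildly singular in a controlled way, along the special fibre, so that connected components of the limit are in bijection with connected components of the general fibre $V_\delta(Y,L)$. This forces a careful local study of limit curves according to how their $\delta$ nodes distribute among $V_0$, $V_1$ and $E$, and according to how each branch meets $E$; the dangerous limits are precisely the curves acquiring a node \emph{on} $E$, or tangent to $E$, since these are the boundary strata along which otherwise separate components might be glued together. Their deformation theory has to be analyzed through logarithmic/admissible deformations of the nodal central fibre along $E$, and one must verify that the regenerated curves sweep out, in a single family, the nearby nodal curves on the general $K3$.

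I expect the main obstacle to be exactly this boundary analysis in the regime where $\delta$ approaches $g-1$. When $\delta$ is small the expected dimension $g-\delta$ is large, there is ample room to slide the nodes off $E$, and the regeneration is unobstructed; this is the range in which one most easily promotes connectedness of $\overline{V_\delta}$ to connectedness of $V_\delta$ and hence to irreducibility. As $\delta$ grows the limit curves become nearly rational: their components are increasingly forced through the base points of the Halphen pencils and meet $E$ with high contact, so that deciding whether two such limits lie in a single component of $\mathcal{V}_\delta$—rather than merely touching along the boundary—becomes delicate, and the monodromy on the nodes is harder to control. Closing this final gap, namely extending the regeneration and monodromy arguments uniformly up to $\delta=g-1$, is the crucial and hardest part of the program.
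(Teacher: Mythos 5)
Your text is a program outline, not a proof, and the statement you are addressing is in fact stated in the paper as a conjecture: the paper itself establishes it only for $\delta\leq g-4$ (plus connectedness of the closure for all $\delta\leq g-1$), so the full range remains open even there. Each of the three pillars of your sketch --- connectedness of the limit Severi variety on the Halphen central fibre, the regeneration statement, and the promotion from connectedness of $\overline{V_{\delta}(Y,L)}$ to irreducibility of $V_{\delta}(Y,L)$ --- is asserted or declared ``expected'' but never established, and you concede the last gap yourself. Concretely, where you appeal to ``transitivity of the monodromy action on the nodes, in the spirit of Harris,'' the paper deliberately avoids monodromy: the known monodromy-style irreducibility arguments (Keilen, Kemeny, Ciliberto--Dedieu) only reach $\delta$ roughly up to $g/4$ and were known not to extend. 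Instead, connectedness on a single Halphen surface is proved by a mechanism absent from your proposal: the excess-intersection identity $|L_{g-1}|\cap \overline{V_\delta(S,L_{g})}=\bigcup_{h=0}^{\delta}\overline{V_{\delta-h}(S,L_{g-h-1})}$, its resolution by an explicit tower of blow-ups of $|L_{g+k}|$ for $k\gg 0$ (where $\overline{V_\delta(S,L_{g+k})}$ is irreducible), and a surjection $\psi:\widetilde{V_\delta(S,L_{g+k})}\to\widetilde{\mathbb P^{k}}$ admitting a section, exhibiting $\widetilde{V_\delta(S,L_{g})}$ as a fibre of a morphism all of whose fibres are connected. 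Nothing in your degeneration-within-$V_i$ induction substitutes for this step, and your central fibre analysis (nodes on $E$, tangency to $E$) is handled in the paper not by logarithmic deformation ad hoc but by Jun Li's expanded degenerations and stacks of relative stable maps, which are also what make the limit Severi variety and its relative normalization well defined.

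The deeper missing idea is the bridge from connectedness of the closure to irreducibility of the open stratum, which you correctly flag as the conceptual core but then leave unspecified. The paper's bridge has two ingredients you do not supply: first, a proof that two components of $\overline{V_\delta}$, if they meet, meet in pure codimension one and generically reduced, obtained by realizing the Severi variety as the image of a degeneracy locus in $|L|\times S^{[\delta]}$ of the expected dimension (hence locally Cohen--Macaulay, so Hartshorne's connectedness theorem applies), together with a delicate finiteness lemma for the fibres of the projection to $|L|$; second, a classification of the codimension-one components of $\mathrm{Sing}\,\overline{V_\delta}$: along $\overline{V_{\delta+1}}$ one uses connectedness of the relative normalization (available only through the stable-maps formulation), while any other such component generically parametrizes either a curve with a single cusp --- where $\overline{V_\delta}$ is unibranch, so two components cannot meet --- or a curve with hyperelliptic normalization. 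The hyperelliptic locus moves in dimension $2$, and excluding it requires the singular component to have dimension at least $3$; this is precisely what forces $\delta\leq g-4$. Your proposal contains no mechanism that would detect, let alone exclude, the hyperelliptic degenerations, so even granting all your ``expected'' steps the argument could not close the range $g-3\leq\delta\leq g-1$ --- which is exactly why the statement remains a conjecture rather than a theorem.
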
 
We recall that $\dim |L|=g$. The constraint $\delta \leq g-1$ is necessary because it is well-known that the linear system $|L|$ contains finitely many rational curves: since this number is computed by the Yau-Zaslow formula \cite{Be} and is different from $1$, the Severi variety $V_\delta(Y,L)$ is definitely reducible for $\delta=g$. Quite surprisingly, the above conjecture has remained open until now, despite numerous attempts.  We will prove it for primitive linear systems as soon as $\delta\leq g-4$ and $g\geq 5$.
\begin{thm}\label{andreamarghe}
Let $(Y,L)$ be a general primitively polarized $K3$ surface of genus $g\geq 2$. Then the following hold:
\begin{enumerate}
\item for every $0\leq \delta\leq g-1$, the closure of the Severi variety $\ov{V_{\delta}(Y,L)}\subset |L|$ is connected;
\item if $g\geq 5$ and $0\leq \delta\leq g-4$, the Severi variety $V_{\delta}(Y,L)$ is irreducible.
\end{enumerate}
\end{thm}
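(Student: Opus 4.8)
The plan is to prove both statements by a single degeneration of $(Y,L)$ to a Type~II degenerate $K3$ whose central fibre is a union of two Halphen surfaces, and to transfer connectivity (resp.\ irreducibility) from that central fibre to the general one. Concretely, I would realise $(Y,L)$ as the general member of a semistable family $\mathcal{X}\to B$ over a disc whose central fibre is $X_0=R\cup_E R'$, with $R,R'$ rational elliptic (Halphen) surfaces meeting along a smooth elliptic curve $E$; triviality of the dualising sheaf of $X_0$ forces $E\in|{-}K_R|\cap|{-}K_{R'}|$. The degenerate polarization restricts to $L_R,L_{R'}$ inducing the same bundle $L_E:=L|_E$ on $E$, and the arithmetic genus $g$ is preserved by adjunction; moreover one arranges the family so that its general fibre is general in the $19$-dimensional moduli of primitively polarized $K3$s. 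A curve $C\in|L|$ then specialises to $C_0=C_R\cup C_{R'}$ with $C_R\in|L_R|$, $C_{R'}\in|L_{R'}|$ and $C_R|_E=C_{R'}|_E$ in $|L_E|$, the $\delta$ nodes of $C$ distributing into nodes of $C_R$ away from $E$, nodes of $C_{R'}$ away from $E$, and $\delta_E$ limit nodes forced onto $E$ by tangencies.

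The analysis then has two ingredients. On each Halphen surface I would pin down the Severi variety $V_{\delta_R}(R,L_R)$: since $R$ is rational with $-K_R$ effective and $L_R$ ample, so that $L_R\cdot({-}K_R)>0$, the nodal deformations are unobstructed, hence $V_{\delta_R}(R,L_R)$ is smooth of expected dimension, and by exploiting the elliptic pencil to degenerate curves into fibres one shows that it is irreducible and that the restriction map $\rho_R\colon V_{\delta_R}(R,L_R)\to|L_E|$ is dominant with irreducible general fibre. The limit of the $K3$ Severi variety is then the union over $\delta_R+\delta_{R'}+\delta_E=\delta$ of the fibre products
\[
V_{\delta_R}(R,L_R)\times_{|L_E|}V_{\delta_{R'}}(R',L_{R'}),
\]
cut out by the matching $\rho_R=\rho_{R'}$ and refined by the nodes on $E$. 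For part~(1) I would exhibit in every component of this central fibre one common, maximally degenerate limit in which both $C_R$ and $C_{R'}$ break completely into fibres of the two pencils through a single distinguished divisor of $|L_E|$; this makes the central fibre connected, and since the relative Severi scheme is proper over $B$ and the number of connected components is upper semicontinuous, $\overline{V_\delta(Y,L)}$ is connected for all $\delta\leq g-1$. For part~(2), when $\delta\leq g-4$ the moving parts of $C_R,C_{R'}$ retain at least four moduli, which is exactly what makes the monodromy of $\rho_R,\rho_{R'}$ act transitively on the matching divisors on $E$; together with irreducibility of the two factors this forces the fibre product to be irreducible, whence, again by upper semicontinuity, $V_\delta(Y,L)$ is irreducible.

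The step I expect to be the main obstacle is controlling the specialisation tightly enough that the number of components behaves semicontinuously: one must prove that the relative Severi scheme is flat with central fibre of the expected dimension, equal to the fibre product above, so that no component of $V_\delta(Y,L)$ is lost and none is spuriously created in the limit. The delicacy concentrates along $E$ and along the singular and multiple fibres of the two elliptic pencils, where $K_R\neq0$ makes the nodal deformation theory depart from the $K3$ picture and where the limit of a node of $C$ may require blowing up the total space before it can be read off; excluding extraneous limit components supported on these loci, and verifying that the passage from connectivity to full irreducibility occurs precisely at the threshold $\delta=g-4$ where transitive monodromy on the $E$-matchings first becomes available, is where the genuine difficulty lies.
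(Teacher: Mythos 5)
Your overall frame---degenerating $(Y,L)$ to a type II fibre glued from two anticanonical rational elliptic (Halphen) surfaces and reading off the Severi variety on the limit---is indeed the paper's strategy, but your transfer step goes in the wrong direction and this is a genuine gap. For a proper family, connectedness (or irreducibility) of the \emph{central} fibre does not propagate to the general fibre: two distinct components of $V_\delta(Y_t,L_t)$ can perfectly well collapse onto a single component of the limit (think of two points colliding), so "the number of connected components is upper semicontinuous" is false in the direction you need, and the same objection kills your irreducibility transfer in part (2). What actually makes the argument work in the paper is a reducedness statement: the central fibre $\overline{\mathcal V_\delta(Y_0^{\mathrm{exp}},L)}$ is generically reduced and any two of its components can be joined by a chain of components whose pairwise intersections are generically reduced (Lemma \ref{tosto}, Propositions \ref{dimension}(iii), \ref{ruled}, \ref{paths}); if two components of the general fibre merged in the limit, the central fibre would be non-reduced at a general point of the image, a contradiction. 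Proving those reducedness statements forces the whole apparatus of Jun Li's expanded degenerations: the limit polarization on $Y_0$ is not unique, and limit curves acquire components in ruled surfaces inserted over the double curve, so the central fibre is not your naive fibre product over $|L_E|$ refined by "$\delta_E$ limit nodes on $E$". In fact, in the paper's setting $L_g\cdot J=1$, so the restriction map to the double curve is \emph{constant} (every curve meets $J$ at the fixed point $p_{10}(g)$), the matching condition is vacuous, and the combinatorics lives instead in the chains $S\cup_J R\cup_J\cdots\cup_J R\cup_J S'$.

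The second gap is that you assume precisely what is hardest: irreducibility of $V_{\delta_R}(R,L_R)$ on the rational elliptic surface (with irreducible general fibre of $\rho_R$) is essentially the Severi problem on the Halphen surface, and the paper does \emph{not} prove it --- it proves only \emph{connectedness} of $\overline{V_\delta(S,L_g)}$, by embedding $|L_g|\subset|L_{g+k}|$ for $k\gg 0$ (where irreducibility holds because $\delta$ is small relative to $g+k$, Proposition \ref{irreducible}), blowing up to repair the excess intersection $|L_{g+k-1}|\cap\overline{V_\delta(S,L_{g+k})}=\bigcup_h\overline{V_{\delta-h}(S,L_{g+k-h-1})}$ of Proposition \ref{cz}, and exhibiting $\widetilde{V_\delta(S,L_g)}$ as a fibre of a surjection $\psi$ onto $\widetilde{\mathbb P^k}$ admitting a section (Lemma \ref{blow}, Theorem \ref{main}), whence connectedness via Stein factorization. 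Irreducibility on the $K3$ is then obtained by a separate local analysis, not by monodromy on matchings: components of $\overline{V_\delta(Y,L)}$ meet in pure codimension $1$ (a Cohen--Macaulay degeneracy-locus argument, Proposition \ref{two}), and a codimension $1$ component of the singular locus not inside $\overline{V_{\delta+1}(Y,L)}$ generically parametrizes either a curve with at most one cusp --- along which $\overline{V_\delta(Y,L)}$ is unibranched, hence not an intersection of components --- or a curve with hyperelliptic normalization; the latter move only in dimension $2$ by \cite[Rmk. 5.6]{KLM}, which is contradictory once $\dim W=g-\delta-1\geq 3$, i.e. exactly when $\delta\leq g-4$. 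So the threshold has nothing to do with transitivity of monodromy "first becoming available"; your proposal offers no proof of either the reducedness mechanism or the Halphen-side input, and both are the real content of the theorem.
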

Previous results in the literature due to Keilen \cite{Kei}, Kemeny \cite{Ke}, Ciliberto-Dedieu \cite{CD2}, Dedieu \cite{De2} only concerned cases where $\delta$ is small with respect to the arithmetic genus $g$ (roughly bounded by $g/4$) and it was clear that they cannot be further improved with similar proof techniques. A weaker form of the conjecture concerning the so-called {\em universal Severi variety} $\mathcal{V}_{g,\delta}$ was considered more approachable. Let $\mathcal F_g$ be the irreducible $19$-dimensional moduli stack of genus $g$ primitively polarized $K3$ surfaces. The stack $\mathcal{V}_{g,\delta}$ is smooth of pure dimension $19+g-\delta$ and admits a morphism $\phi_{g,\delta}:\mathcal{V}_{g,\delta}\to \mathcal{F}_g^\circ$ to a suitable open substack $\mathcal{F}_g^\circ$ of $\mathcal{F}_g$ whose fiber over a general point $(Y,L)\in \mathcal F_g$ equals the Severi variety $V_{\delta}(Y,L)$. \begin{conj}\label{universal}
For every $0\leq \delta\leq g$, the universal Severi variety $\mathcal{V}_{g,\delta}$ is irreducible.
\end{conj}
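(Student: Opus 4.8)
The plan is to deduce the universal statement from the fibrewise results of Theorem~\ref{andreamarghe} together with a monodromy argument. Recall that $\mathcal{V}_{g,\delta}$ is smooth, so it is irreducible if and only if it is connected, and that it carries the morphism $\phi_{g,\delta}\colon \mathcal{V}_{g,\delta}\to \mathcal{F}_g^\circ$ to the irreducible base $\mathcal{F}_g^\circ$. I would first observe that every irreducible component of $\mathcal{V}_{g,\delta}$ dominates $\mathcal{F}_g^\circ$: since $\mathcal{V}_{g,\delta}$ is pure of dimension $19+g-\delta$ while the fibres of $\phi_{g,\delta}$ have dimension $g-\delta$ over all of $\mathcal{F}_g^\circ$, a component mapping into a proper closed substack (of dimension $\le 18$) would have dimension at most $18+(g-\delta)$, a contradiction. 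Consequently $\mathcal{V}_{g,\delta}$ is irreducible precisely when the monodromy of $\phi_{g,\delta}$ acts transitively on the set of connected components of the general fibre, which by smoothness coincides with the set of irreducible components of $V_\delta(Y,L)$.

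With this reformulation, the range $0\le \delta\le g-4$ (and $g\ge 5$) is immediate: by Theorem~\ref{andreamarghe}(2) the general fibre $V_\delta(Y,L)$ is irreducible, so it has a single component, the monodromy action is trivially transitive, and $\mathcal{V}_{g,\delta}$ is irreducible. The content of Conjecture~\ref{universal} beyond Theorem~\ref{andreamarghe} therefore lies entirely in the range $g-3\le \delta\le g$, where the general Severi variety may well be reducible; for $\delta=g$ it certainly is, by the Yau--Zaslow count. Here the available input is the connectedness of the closure $\overline{V_\delta(Y,L)}$ for $\delta\le g-1$ from Theorem~\ref{andreamarghe}(1). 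The strategy is to promote this connectedness to monodromy transitivity: the distinct components of $V_\delta(Y,L)$ meet, inside the connected set $\overline{V_\delta(Y,L)}$, along boundary strata of higher nodality, typically components of $\overline{V_{\delta+1}(Y,L)}$, and one wants to exhibit, for each such wall, a loop in $\mathcal{F}_g^\circ$ whose monodromy interchanges the two sheets crossing it.

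To produce these wall-crossing monodromies I would again degenerate to Halphen surfaces, as in the proof of Theorem~\ref{andreamarghe}. On the Halphen surface the components of $V_\delta$ and the incidences between $V_\delta$ and $V_{\delta+1}$ are accessible by explicit computation, and a local analysis of the smoothing of the extra node, carried out as $(Y,L)$ varies in the universal family, should realise the vanishing cycle as an honest transposition of the two components abutting the wall. Combining one such transposition per edge with the connectedness of the component graph guaranteed by Theorem~\ref{andreamarghe}(1) would give transitivity, hence irreducibility of $\mathcal{V}_{g,\delta}$ for $g-3\le \delta\le g-1$. The extremal case $\delta=g$ must be treated separately, since there the fibre is disconnected and Theorem~\ref{andreamarghe}(1) does not apply: for $\mathcal{V}_{g,g}$ I would instead argue directly that the universal space of rational nodal curves is irreducible, deforming an arbitrary rational nodal curve through the Halphen degeneration into one fixed irreducible family and using that such curves, although rigid on a fixed $K3$, move freely once the surface is allowed to vary.

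The main obstacle I anticipate is precisely the local monodromy analysis along the walls in the range $g-3\le \delta\le g$. Connectedness of $\overline{V_\delta(Y,L)}$ shows only that the components touch; converting a tangency or crossing along a stratum of higher nodality into a genuine nontrivial monodromy requires understanding the vanishing cycles of the node smoothings uniformly as the $K3$ moves in $\mathcal{F}_g^\circ$, and ensuring that these cycles do act nontrivially rather than fixing each sheet. A secondary difficulty is verifying, through the Halphen degeneration, that every component of $\mathcal{V}_{g,\delta}$ indeed limits onto the relevant maximally degenerate loci, so that the connectivity of Theorem~\ref{andreamarghe}(1) can actually be brought to bear; this rests on the independent-smoothing properties of nodes on $K3$ surfaces holding in the universal family, which is plausible but must be checked in the degeneration.
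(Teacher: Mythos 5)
What you are reviewing here is a conjecture: the paper does not prove Conjecture \ref{universal} in full, and its only established portion is Corollary \ref{viva} ($g\geq 5$, $0\leq\delta\leq g-4$). Your opening reduction is correct and is essentially the paper's route to that corollary: every component of $\mathcal{V}_{g,\delta}$ dominates $\mathcal{F}_g^\circ$ (the paper quotes smoothness and dominance of $\phi_{g,\delta}$ on all components from \cite{FKPS}; your dimension count works because, by Proposition \ref{known}, the fibres have dimension exactly $g-\delta$ over every point of $\mathcal{F}_g^\circ$, not just generically), so irreducibility of the general fibre from Theorem \ref{andreamarghe}(2) immediately gives irreducibility of $\mathcal{V}_{g,\delta}$ in that range. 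Up to this point you and the paper agree.

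Beyond $\delta\leq g-4$, however, your proposal is a research plan rather than a proof, and the central step would not go through as described. The wall-crossing monodromy you want to extract from Theorem \ref{andreamarghe}(1) does not exist where you place it: the paper's own analysis in the proof of Theorem \ref{local} shows that along the codimension-$1$ strata of $\mathrm{Sing}\,\overline{V_\delta(Y,L)}$ whose general member is immersed or has a single cusp, the Severi variety is \emph{unibranched}, so there are no two sheets to interchange there; and the walls along $\overline{V_{\delta+1}(Y,L)}$ are ruled out as component intersections by the connectedness of the relative normalization. The only walls that could actually separate two components are those parametrizing curves with hyperelliptic normalization, which by \cite[Rmk.\ 5.6]{KLM} sweep out only a $2$-dimensional locus --- this is precisely why the paper's argument terminates at $\delta\leq g-4$: for $g-3\leq\delta\leq g-1$ such a locus can be a genuine codimension-$1$ wall, and nothing is known about the local branch structure or the monodromy of the universal family across it. So "one transposition per edge" presupposes exactly the open problem, as you partly acknowledge. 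Finally, for $\delta=g$ the conjecture is a monodromy-transitivity statement for the finite map $\phi_{g,g}$, related to \cite{De1,Ch4} and proved only for $2\leq g\leq 11$, $g\neq 10$ in \cite{CD}; your remark that rational nodal curves "move freely once the surface is allowed to vary" merely restates that $\mathcal{V}_{g,g}$ dominates $\mathcal{F}_g$, which is compatible with an intransitive monodromy group and hence does not yield irreducibility. In short: your proposal correctly reproves Corollary \ref{viva} and correctly isolates the remaining difficulty, but in the range $g-3\leq\delta\leq g$ it does not close the gap that leaves the statement a conjecture in the paper.
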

This prediction makes perfect sense even for $\delta=g$, when it becomes a question on the monodromy of the finite morphism $\phi_{g,g}$. It is related to the non-existence of self-rational maps of degree $>1$ on a general $K3$ surface in $\mathcal F_g$, which was predicted by Dedieu in \cite{De1} and achieved by Chen in \cite{Ch4}. Conjecture \ref{universal} was proved by Ciliberto-Dedieu \cite{CD} for $2\leq g\leq 11$ and $g\neq 10$, which is exactly the range where a general genus $g$ curve lies on a $K3$ surface. We remark that, since the morphism $\phi_{g,\delta}$ is known to be smooth and dominant on all components of $\mathcal{V}_{g,\delta}$ for every $\delta$ \cite{FKPS}, Conjecture \ref{uno} implies Conjecture \ref{universal} for every $0\leq \delta\leq g-1$. In particular, the following result comes straightforward from Theorem \ref{andreamarghe}.
\begin{cor}\label{viva}
For every $g\geq 5$ and every $0\leq \delta\leq g-4$ the universal Severi variety $\mathcal{V}_{g,\delta}$ is irreducible.
\end{cor}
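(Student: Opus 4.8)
The plan is to deduce the statement directly from part (2) of Theorem \ref{andreamarghe}, which is precisely the assertion of Conjecture \ref{uno} for primitive polarizations in the range $g\geq 5$, $0\leq\delta\leq g-4$. It then only remains to formalize the implication, already announced above, from irreducibility of a general Severi variety to irreducibility of the universal Severi variety $\mathcal{V}_{g,\delta}$; this is a formal consequence of the structural properties of $\phi_{g,\delta}$ recorded in \cite{FKPS}.

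First I would recall that $\phi_{g,\delta}:\mathcal{V}_{g,\delta}\to\mathcal{F}_g^\circ$ is smooth and dominant on every irreducible component of $\mathcal{V}_{g,\delta}$, that $\mathcal{V}_{g,\delta}$ is smooth of pure dimension $19+g-\delta$, and that the fiber over a general $(Y,L)\in\mathcal{F}_g^\circ$ is the Severi variety $V_\delta(Y,L)$, of pure dimension $g-\delta$. The base $\mathcal{F}_g^\circ$ is irreducible of dimension $19$. Next, let $W$ be an irreducible component of $\mathcal{V}_{g,\delta}$. Since $\dim W=19+g-\delta$ and $W$ dominates $\mathcal{F}_g^\circ$, for $(Y,L)$ in a dense open subset the fiber $W\cap\phi_{g,\delta}^{-1}(Y,L)$ is a closed subset of $V_\delta(Y,L)$ of dimension $g-\delta$. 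By Theorem \ref{andreamarghe}(2) the variety $V_\delta(Y,L)$ is irreducible of dimension $g-\delta$, so this closed subset of full dimension must coincide with all of $V_\delta(Y,L)$. Intersecting the finitely many dense open subsets attached to the various components, I obtain a general $(Y,L)$ over which every component of $\mathcal{V}_{g,\delta}$ contains the entire fiber $V_\delta(Y,L)$.

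Finally I would invoke smoothness: since $\mathcal{V}_{g,\delta}$ is smooth, it is regular, so each of its points lies on a unique irreducible component and the irreducible components are pairwise disjoint. As the nonempty fiber $V_\delta(Y,L)$ is contained in every component, there can be only one, and $\mathcal{V}_{g,\delta}$ is irreducible. There is no genuine obstacle at this stage: the entire difficulty is concentrated in Theorem \ref{andreamarghe}(2), which we take as given; the passage to the universal Severi variety uses nothing beyond smoothness, purity of dimension, and dominance of each component, all furnished by \cite{FKPS}.
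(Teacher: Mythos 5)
Your proof is correct and takes essentially the same route as the paper, which deduces Corollary \ref{viva} directly from Theorem \ref{andreamarghe}(2) via the smoothness and dominance of $\phi_{g,\delta}$ on all components of $\mathcal{V}_{g,\delta}$ recorded in \cite{FKPS} (the paper declares this implication ``straightforward'' without writing it out). Your write-up simply makes explicit the dimension count showing each component contains the whole general fiber $V_\delta(Y,L)$, together with the disjointness of components forced by smoothness and the nonemptiness of the fiber, which is exactly the intended argument.
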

The assumption $\delta\leq g-4$ in Theorem \ref{andreamarghe}(2) and in Corollary \ref{viva} is due to proof technique, and is only used in the proof of Theorem \ref{local}. However, there is no evidence for the existence of counterexamples to Conjecture \ref{uno} in the remaining cases $g-3\leq \delta\leq g-1$.

\subsection{Strategy and organization of the paper}Theorem \ref{andreamarghe} is  proved by dege\-ne\-ration to a so-called {\em Halphen surface} $\ov S_g\subset \PP^g$, which has an elliptic singularity and is limit of primitively embedded $K3$ surfaces of genus $g$. These surfaces, introduced in \cite{CD}, appeared in the chara\-cte\-ri\-zation of hyperplane sections of $K3$ surfaces accomplished by Arbarello-Bruno-Sernesi \cite{ABS}, and were first exploited in \cite{ABFS} and then in \cite{AB,FT}. We recall their construction. Let $S$ be the  blow-up of $\PP^2$ at $9$ general points and denote by $|L_g|$ the $g$-dimensional linear system on $S$ parametrizing the strict transforms of plane curves of degree $3g$ having multiplicity $g$ at the first $8$ points that we have blown up and multiplicity $g-1$ at the last one; these are called {\em Du Val curves} of genus $g$ after Du Val, who first considered them \cite{DV}. The surface $\ov S_g$ is realized as the closure in $\mathbb P^g$  of the rational map $S\dashrightarrow \PP^g$ defined by $|L_g|$. In particular, Severi varieties of nodal hyperplane curves on $\ov S_g$ are linked to Severi varieties $V_{\delta} (S, L_g)$ on $S$. A major advantage is that the surface $S$ possesses polarizations $L_g$ for every genus $g\geq 2$ and is thus the right environment where to perform some sort of induction. In Section \ref{due}, after recalling the main features of Halphen surfaces, we show that the results known for Severi varieties on a general $K3$ surface of genus $g$ still hold true for $V_\delta(S,L_g)$. In particular, Chen's proof of the density of Severi varieties in any equigeneric locus on a general polarized $K3$ surface \cite{Ch2,Ch3} works with essentially no change in the context of Halphen surfaces. Moreover, the irreducibility of $V_\delta(S,L_g)$ is easily obtained when $\delta$ is small with respect to $g$: this is the basis for our induction.  

In Section \ref{tre}, we show that for any fixed integer $k\geq 1$ the linear system $|L_g|$ sits as a linear space of codimension $k$ inside of $|L_{g+k}|$. However, the subspaces $|L_{g+k-j}|\subset |L_{g+k}|$ for $j\geq 2$ have excess intersection with the Severi varieties $\overline{V_\delta(S,L_{g+k})}$, as follows from the following equality: 
$$
|L_{g+k-1}|\cap \overline{V_\delta(S,L_{g+k}) }=\bigcup_{h=0}^\delta  \overline{V_{\delta-h}(S,L_{g+k-h-1}) }.
$$
To circumvent this problem, we perform a sequence of blow-ups with smooth centers
$ \widetilde {|L_{g+k}|} \to |L_{g+k}|$.
Denoting by $\widetilde{V_\delta(S,L_{g+k}) }$ the strict transform of $\overline{V_\delta(S,L_{g+k}) }$, we prove in Proposition \ref{key1} that $\widetilde{V_\delta(S,L_{g}) }$ is isomorphic to the intersection of $\widetilde{V_\delta(S,L_{g+k}) }$ with the strict transform of $|L_{g+k-1}|$ and the exceptional divisors. We then construct a surjective map 
$$\psi:\widetilde{V_\delta(S,L_{g+k}) }\longrightarrow\widetilde{\mathbb P^{k}}$$
where $\widetilde{\mathbb P^{k}}$ is obtained from $\mathbb P^{k}$ again by a sequence of blow-ups. Lemma \ref{blow} and Theorem \ref{main} prove that $\widetilde{V_\delta(S,L_{g}) }$ is isomorphic a fiber of $\psi$ and that $\psi$ admits a section. By a standard argument using Stein factorization and Zariski's Main Theorem, we conclude that $\widetilde{V_\delta(S,L_{g}) }$ is connected and thus the same holds true for $\overline{V_\delta(S,L_{g}) }$.

In order to deduce connectedness for a general $K3$ surface, we consider a stable type II degeneration $Y_0:=S\cup_J S'$ constructed by appropriately gluing two surfaces $S,S'$, which are both a blow-up of $\PP^2$ at $9$ general points as above, have isomorphic anticanonical divisor $J$ and satisfy $N_{J/S}\simeq N_{J/S'}^\vee$. However the limit on $Y_0$ of a relative genus $g$ polarization on a family of $K3$ surfaces degenerating to it is not unique (for instance, one of such limits contracts $S'$ and maps $S$ to $\ov S_g\subset \mathbb P^g$). "Good limits" on $Y_0$ of moduli spaces of stable maps and Severi varieties on general fibers of the family are obtained by applying the theory of {\em expanded degenerations} and moduli stacks of stable maps $\M_{g-\delta}(Y_0^\mathrm{exp},L)$ to such expansions introduced by Jun Li in \cite{Li1,Li2}. The theory of good degenerations of relative Hilbert schemes developed in \cite{LW} is used to define an expanded linear system $|L_g|^{\mathrm{exp}}$. We recall that an expanded degeneration of $Y_0$ is a semistable model 
$$S\cup_J R\cup_J\ldots\cup_JR\cup_J S'$$
obtained from $Y_0$ inserting a chain of ruled surfaces $R:=\PP(\mathcal O_J\oplus  N_{J/S})$ over $J$. Points of $|L_g|^{\mathrm{exp}}$ parametrize curves that live in some expansion of $Y_0$ and have no components in its singular locus. Analogously, the moduli stack $\M_{g-\delta}(Y_0^\mathrm{exp},L)$ parametrizes stable maps to some expansion of $Y_0$ mapping no component of the domain curve to the singular locus of the target expansion. We let $\overline{\mathcal V_\delta(Y_0^\mathrm{exp},L)}$ be the image in $|L_g|^{\mathrm{exp}}$ of the semi-normalization of the substack of $\M_{g-\delta}(Y_0^\mathrm{exp},L)$ parametrizing smoothable maps. We exploit Li's decomposition (also used in \cite{MPT}) of $\M_{g-\delta}(Y_0^\mathrm{exp},L)$ as a non-disjoint union
$$\M_{g-\delta}(Y_0^\mathrm{exp},L)=\bigcup_{\substack{g_1+g_2=g\\h_1+h_2=g-\delta}}\M_{h_1}(S/J,L_{g_1})\times \M_{h_2}(S'/J,L_{g_2}'),$$
where $\M_{h}(S/J,L_{g})$ stands for the moduli stack of stable relative maps to expanded degenerations $S\cup_JR\cup_J\cdots\cup_JR$ of $(S,J)$ with multiplicity $1$ along the relative divisor $J$. A similar decomposition 
\begin{equation*}
\overline{\mathcal V_\delta(Y_0^\mathrm{exp},L)}=\bigcup_{\substack{g_1+g_2=g\\\delta_1+\delta_2=\delta}}\overline{\V_{\delta_1}(S/J,L_{g_1})}\times \overline{\V_{\delta_2}(S'/J,L_{g_2}')}
\end{equation*}
is obtained in Lemma \ref{tosto}. To prove that $\overline{\mathcal V_\delta(Y_0^\mathrm{exp},L)}$ is connected, in Proposition \ref{paths} one reduces to showing connectedness of the stack $\overline{\V_{\delta}(S/J,L_g)}$ as soon as $\delta\leq g-1$. This is done by showing that the map $\psi$ mentioned above, along with its section, lift to the the stack $\M_{g+k-\delta}(S/J,L_{g+k})$ (cf. Lemma \ref{federico} and Theorem \ref{expanded}). The advantage of using stable maps is that one also obtains connectedness of the relative normalization $\overline{\V_{\delta}(S/J, L_g)}^n$ of  $\overline{\V_{\delta}(S/J, L_g)}$ along  $\overline{\V_{\delta+1}(S/J, L_g)}$.

Part (1) of Theorem \ref{andreamarghe} is the content of Theorem \ref{connected}; a key point is that any two components of the degeneration $\overline{\mathcal V_\delta(Y_0^\mathrm{exp},L)}$ can be connected through a sequence of components whose pairwise intersection is generically reduced. 

Section \ref{cinque} is then devoted to the proof of part (2). First of all, we show that, if $(S,L)\in \mathcal F_g$ is general and $0\leq \delta\leq g-1$, two irreducible components of the Severi variety $\overline{V_{\delta}(S,L)}$ intersect in codimension $1$, if they meet at all (cf. Propositions \ref{two}, \ref{cohen} and Lemma \ref{lemcz}). This is obtained by realizing $\overline{V_{\delta}(S,L)}\subset |L|$ as the image under a generically finite map of a degeneracy locus in $S^{[\delta]}\times |L|$ and using the fact that degeneracy loci of the expected dimension are Cohen-Macaulay. Knowing that $\overline{V_{\delta}(S,L)}$ is connected, in order to prove its irreducibility it is thus enough to show that the codimension $1$ components of its singular locus cannot contain the intersection of two irreducible components. This holds true for $\overline{V_{\delta+1}(S,L)}\subset \mathrm{Sing}\overline{V_{\delta}(S,L)}$ by the connectedness of the relative normalization of $\overline{V_{\delta}(S,L)}$ along $\overline{V_{\delta+1}(S,L)}$.  Let $W$ be any codimension $1$ component of $\mathrm{Sing}\overline{V_{\delta}(S,L)}$ not contained in $\overline{V_{\delta+1}(S,L)}$. By deformation theory (cf. \cite{CD2} for similar arguments), we show that a general point of $W$ parametrizes either a curve whose singularities consist of (possibly non-transverse) smooth linear branches except at most for one cusp, or a curve whose normalization is hyperelliptic of genus $g-\delta$. In the former case, it turns out that $\overline{V_{\delta}(S,L)}$ is unibranched along $W$. The latter case can be excluded as soon as $W$ has dimension $\geq 3$, or equivalently, $\delta \leq g-4$, because curves in $|L|$ with hyperelliptic normalization of any fixed geometric genus $\geq 2$ are known to move in dimension $2$ (cf. \cite[Rmk. 5.6]{KLM}); this is the only part of the proof where the assumption $\delta \leq g-4$ is used.

\subsection{Preliminaries on Severi varieties on $K3$ surfaces}
We will here collect known pro\-per\-ties of Severi varieties on $K3$ surfaces that are relevant for this paper and will be generalized to Halphen surfaces in Section \ref{due}. Standard deformation theory yields the following result (cf., e.g., \cite[\S 3--4]{DS}):
\begin{prop}\label{known}
Let $(Y,L)$ be a polarized $K3$ surface of genus $g$. For any fixed integer $0\leq \delta\leq g$ the Severi variety $V_{\delta}(Y,L)$, if nonempty, is smooth of dimension $g-\delta$.
\end{prop}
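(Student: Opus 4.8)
The plan is to prove the statement by a local analysis of $V_{\delta}(Y,L)$ at an arbitrary point $[C]$, combining the deformation theory of the nodal curve $C$ with the vanishing of one cohomology group. Write $N=\mathrm{Sing}(C)=\{p_1,\dots,p_\delta\}$ for the (reduced) set of nodes and let $s\in H^0(Y,L)$ cut out $C$. A first-order deformation $s+\varepsilon t$ of $C$ inside $|L|$ keeps each node, to first order, precisely when $t$ vanishes at the corresponding point, and for a node the equisingular ideal is the reduced ideal of the point; hence the tangent space to $V_{\delta}(Y,L)$ at $[C]$ is
\[
T_{[C]}V_{\delta}(Y,L)\;=\;H^0(Y,\I_N\otimes L)\big/\langle s\rangle,
\]
while the obstructions to equisingular deformations lie in $H^1(Y,\I_N\otimes L)$ (this is the standard framework of \cite{DS} and the references therein). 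Thus everything reduces to the vanishing
\[
H^1(Y,\I_N\otimes L)=0,
\]
which I will call $(\ast)$. Indeed, $(\ast)$ simultaneously shows that $N$ imposes independent conditions on $|L|$ — so that $\dim T_{[C]}V_{\delta}=h^0(\I_N\otimes L)-1=(g+1-\delta)-1=g-\delta$, using $h^0(L)=2+L^2/2=g+1$ on the $K3$ surface $Y$ — and that the equisingular deformation functor is unobstructed, hence smooth. Together these give that $V_{\delta}(Y,L)$ is smooth of dimension $g-\delta$ at $[C]$, and since $[C]$ is arbitrary this yields the proposition.

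It remains to establish $(\ast)$, and this is where the $K3$ geometry enters. I would argue as follows. Let $\sigma:\widetilde Y\to Y$ be the blow-up of $Y$ at the $\delta$ nodes, with exceptional divisor $E=\sum_i E_i$. Then $\sigma_*\mathcal{O}_{\widetilde Y}(-E)=\I_N$ and $R^1\sigma_*\mathcal{O}_{\widetilde Y}(-E)=0$, so the projection formula and the Leray spectral sequence give
\[
H^1(Y,\I_N\otimes L)\;\cong\;H^1\big(\widetilde Y,\,\sigma^*L-E\big).
\]
Because $K_Y=\mathcal{O}_Y$, we have $K_{\widetilde Y}=E$, and Serre duality on $\widetilde Y$ identifies
\[
H^1\big(\widetilde Y,\,\sigma^*L-E\big)^{\vee}\;\cong\;H^1\big(\widetilde Y,\,K_{\widetilde Y}-(\sigma^*L-E)\big)\;=\;H^1\big(\widetilde Y,\,-\widetilde C\big),
\]
where $\widetilde C:=\sigma^*L-2E$ is the strict transform of $C$. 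Since $C$ is integral with only nodes, $\widetilde C$ is a smooth connected curve (it is the normalization of $C$). Finally, the structure sequence $0\to\mathcal{O}_{\widetilde Y}(-\widetilde C)\to\mathcal{O}_{\widetilde Y}\to\mathcal{O}_{\widetilde C}\to 0$, together with $H^1(\widetilde Y,\mathcal{O}_{\widetilde Y})=H^1(Y,\mathcal{O}_Y)=0$ and the fact that connectedness of $\widetilde C$ makes $H^0(\mathcal{O}_{\widetilde Y})\to H^0(\mathcal{O}_{\widetilde C})$ an isomorphism, shows that $H^1(\widetilde Y,-\widetilde C)=0$. This proves $(\ast)$.

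The main obstacle is exactly the vanishing $(\ast)$. The sheaf $\I_N\otimes L$ is not locally free, so one cannot apply Serre duality to it directly; it is essential to pass to the blow-up, where the two branches at each node are separated. The argument relies on three inputs that are special to $K3$ (more generally, to suitable) surfaces — the triviality of $K_Y$, the vanishing $H^1(\mathcal{O}_Y)=0$, and the integrality of $C$, which forces $\widetilde C$ to be connected. The only points I would verify carefully are the standard identifications of the tangent and obstruction spaces of the equisingular functor for nodal curves; the remaining cohomological computations are formal.
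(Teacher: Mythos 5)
Your proof is correct, and its skeleton --- identifying the tangent space to $V_\delta(Y,L)$ at $[C]$ with $H^0(Y,L\otimes \I_N)/\langle s\rangle$ and reducing both the dimension count and the unobstructedness to the single vanishing $H^1(Y,L\otimes \I_N)=0$ --- is precisely the standard deformation-theoretic framework that the paper invokes by citation to \cite{DS}, stating only the tangent-space description. Where you genuinely diverge is in the proof of the vanishing. The paper's own version of this computation (carried out for the Halphen surface analogue in Proposition \ref{dimension}) takes global sections in the sequence $0\to\mcO\to L\otimes \I_N\to \omega_C(p_{10}(g))\otimes \I_N\to 0$ and uses the adjoint-ideal identification $\omega_C\otimes \I_N\simeq \nu_*\omega_{\tilde C}$, where $\nu:\tilde C\to C$ is the normalization; you instead blow up the $\delta$ nodes and apply Serre duality on $\widetilde Y$, reducing to $H^1(\widetilde Y,-\widetilde C)=0$, which follows from $H^1(\mcO_{\widetilde Y})=0$ and the connectedness of the strict transform. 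The two arguments are in fact Serre-dual to one another: in the pushforward approach the decisive point is that the connecting map $H^1(\nu_*\omega_{\tilde C})\to H^2(\mcO_Y)$ is an isomorphism, which is dual to the restriction of constants to the connected curve $\tilde C$ being an isomorphism --- exactly your last step. Your route has the advantage of never manipulating the non-locally-free sheaf beyond Leray, and of needing only integrality of $C$; its cost is that it leans on $K_Y\simeq\mcO_Y$ (via $K_{\widetilde Y}=E$), whereas the paper's formulation is the one that transports to the Halphen surfaces, where $K_S$ is nontrivial and one twists by the base point $p_{10}(g)$. Two small points to make explicit in a final write-up: the count $h^0(L)=g+1$ uses $h^1(L)=0$ (Kodaira vanishing for the ample $L$, or the Serre dual of your own connectedness argument applied to $-C$), and one should note $h^2(L\otimes\I_N)=h^2(L)=0$ so that $(\ast)$ really gives $h^0(L\otimes\I_N)=\chi(L\otimes\I_N)=g+1-\delta$.
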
 
Indeed, for any $C\in V_{\delta}(Y,L)$ the projective tangent space to $V_{\delta}(Y,L)$ at $C$ coincides with $\mathbb P(H^0(Y,L\otimes I_N))$, where $N$ is the scheme of nodes of $C$. Furthermore, the nodes of any such curve $C$ can be smoothed independently. Therefore, the nonemptiness of $V_{\delta}(Y,L)$ for every $\delta$ reduces to the existence in the linear system $|L|$ of a nodal rational curve. This was achieved by Mori-Mukai for a general primitively polarized $K3$ surface, and was then generalized by Chen to non primitive polarizations.
\begin{thm}[\cite{MM, Ch1}]\label{nonempty}
Let $(Y,L)$ be a general $K3$ surface of genus $g$. For any fixed integer $0\leq \delta\leq g$, the Severi variety $V_{\delta}(Y,L)$ is nonempty.
\end{thm}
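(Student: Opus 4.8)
The plan is to reduce the whole statement to the single extremal case $\delta=g$ and then to produce one integral rational curve in $|L|$ by a degeneration argument. For the reduction I would use the deformation-theoretic facts recalled right after Proposition~\ref{known}: for $C\in V_\delta(Y,L)$ with node-scheme $N$ the tangent space to $V_\delta(Y,L)$ is $\mathbb P(H^0(Y,L\otimes I_N))$, and the $\delta$ nodes impose independent conditions, so they can be smoothed independently while the curve stays integral. Hence from a single integral curve $C\in|L|$ with exactly $g$ nodes one produces, for every $0\le\delta\le g$, an integral curve in $|L|$ with precisely $\delta$ nodes by smoothing $g-\delta$ of them; smoothing a node of an integral curve keeps it integral and lowers the node number by one. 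Since a curve with $g$ nodes in $|L|$ has $p_a=g$ and $p_g=0$, i.e.\ is an integral rational curve, the theorem reduces to the assertion that on a general $(Y,L)$ the system $|L|$ contains an integral rational curve.

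To exhibit such a curve I would degenerate $(Y,L)$ inside $\mathcal F_g$ to an elliptic $K3$ surface $Y_0$ with a section $\sigma$ ($\sigma^2=-2$) and fiber class $f$, taking for $L_0$ the primitive class $\sigma+gf$, which satisfies $L_0^2=2g-2$. A general such $Y_0$ has $24$ irreducible nodal fibers. I would then build a connected genus-$0$ stable map $\mu:D\to Y_0$ in the class $[L_0]$ whose image is the union of $\sigma$ with a configuration of the nodal fibers realizing the class $\sigma+gf$: when $g$ is small this is literally $\sigma+f_1+\cdots+f_g$ for distinct nodal fibers $f_i$, with $D$ the tree of $g+1$ rational curves glued at the points $\sigma\cap f_i$ and the self-nodes of the $f_i$ appearing as unglued identified pairs; in general one lets components of the nodal fibers appear with multiplicities, handled by multiple covers in the space of stable maps as in the Bryan--Leung configurations.

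The heart of the argument, and the step I expect to be the main obstacle, is the simultaneous smoothing: in a one-parameter family $\mathcal Y\to\Delta$ with central fiber $Y_0$ and general fiber a general polarized $K3$ surface $Y_t$, one must show that $\mu$ deforms to a map whose image $C_t\in|L_t|$ is an \emph{integral} rational curve with exactly $g$ nodes. The difficulty is intrinsic: on a $K3$ surface the space of genus-$0$ stable maps in a fixed class has negative virtual dimension, so rational curves are obstructed and their persistence under deformation is \emph{not} a matter of dimension count. I would therefore analyze the relative deformations of $\mu$ through the sheaves $\mu^*T_{\mathcal Y/\Delta}$ and the normal sheaf of $\mu$, showing that the obstruction to moving $C_0$ off the central fiber vanishes and that a general deformation both smooths the $g$ connecting nodes $\sigma\cap f_i$ (so that $C_t$ becomes irreducible, the tree $D$ deforming to a smooth $\mathbb P^1$) and preserves the remaining $g$ nodes; the explicit elliptic geometry of $Y_0$ is precisely what makes these obstruction computations tractable, as carried out in \cite{MM} and \cite{Ch1}. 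Chen's theorem that rational curves on a general $K3$ surface are nodal then guarantees that $C_t$ has exactly $g$ nodes, so $V_g(Y_t,L_t)\neq\emptyset$ on the general fiber; the extension to arbitrary (non-primitive) polarizations is obtained by running the same construction with a suitable class $a\sigma+bf$ on an elliptic $K3$.
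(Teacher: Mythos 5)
Your proposal follows the paper's route exactly: Theorem \ref{nonempty} is quoted from \cite{MM, Ch1}, and the paper's only argument is precisely the reduction you give---the independent smoothing of nodes recalled after Proposition \ref{known} reduces nonemptiness of $V_\delta(Y,L)$ for every $0\leq\delta\leq g$ to the existence of an integral nodal rational curve in $|L|$, which is then cited to Mori--Mukai (primitive case) and Chen (non-primitive case). Your further sketch of the elliptic-$K3$/Bryan--Leung degeneration is a reasonable outline of the content of those references, with the genuinely obstructed step (persistence of the rational curve off the central fiber, which is not a dimension count) correctly identified and deferred to \cite{MM} and \cite{Ch1}, exactly as the paper does.
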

For primitive polarizations, Chen obtained the following much stronger result: 
 \begin{thm}[\cite{Ch2}]\label{rational}
 Let $(Y,L)$ be a general primitively polarized $K3$ surface of genus $g$. Then, all rational curves in the linear system $|L|$ are nodal. 
 \end{thm}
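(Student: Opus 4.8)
The plan is to establish this by a degeneration argument controlling the singularities of limiting rational curves, which is the natural strategy given that the finitely many rational curves in $|L|$ are rigid on a fixed $(Y,L)$ and must therefore be studied by letting the surface vary. I would phrase the target as a statement about moduli: let $B\subset \F_g$ be the locus of those $(Y,L)$ carrying a rational curve in $|L|$ that is \emph{not} nodal, i.e.\ whose normalization map fails to be an immersion with only transverse double points. Since nodality is an open condition in families of maps, it suffices to prove $\dim B<19$, so that the general member of $\F_g$ avoids $B$.

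First I would choose a one-parameter degeneration of $(Y,L)$ with smooth total space whose central fibre is a normal crossings union $Y_0=R_1\cup_E R_2$ of two rational surfaces glued along a common smooth anticanonical elliptic curve $E$, with $N_{E/R_1}\otimes N_{E/R_2}\simeq\mcO_E$ so that $Y_0$ is a genuine limit of primitively polarized $K3$ surfaces of genus $g$. I would then run a semistable reduction for the family of genus $0$ stable maps whose classes specialize $[L]$: a rational curve $C_t\in|L|$ on the general fibre acquires, as $t\to 0$, a limit stable map $f_0\colon C_0\to Y_0^{\mathrm{exp}}$ into a possibly expanded central fibre, with no component mapped into the singular locus and with components distributed over $R_1$ and $R_2$ meeting along $E$ with matching contact orders (the admissibility condition).

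The core of the argument is then a local-to-global analysis of the singularities of $f_0(C_0)$. On each rational surface $R_i$ the relevant rational curves can be shown to be nodal, with spaces of maps of the expected dimension, by classical transversality on rational surfaces (where $-K_{R_i}$ is effective), so that cusps, tacnodes and higher multiplicity points each impose independent conditions and occur only in the expected codimension. Combining this with the constraint that the two halves meet \emph{transversally} along the general elliptic curve $E$, one sees that the generic limit $f_0$ is an immersion crossing $E$ transversally, hence its image is nodal, and a first-order smoothing computation shows such configurations sweep out an equidimensional family smoothing to \emph{nodal} rational curves on the general fibre. Conversely, any non-nodal rational curve on a nearby $K3$ would specialize to a stable map acquiring a cusp, a tangency with $E$, or a non-transverse multiple point, and I would bound the dimension of each such degenerate stratum to show that the locus of surfaces carrying it has dimension at most $18$, whence $\dim B<19=\dim\F_g$.

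I expect the main obstacle to be exactly this control at the gluing divisor $E$: ruling out that components of $C_0$ become tangent to $E$ or that the map ramifies there, and verifying through the relative/logarithmic deformation theory of the branches meeting $E$ that non-immersive or non-transverse limits fail to smooth to rational curves in the correct class and dimension. This obstruction-theoretic step---showing that worse-than-nodal limits are genuinely obstructed rather than merely special---is the delicate technical heart of Chen's method, and it is the part that does not reduce to a formal dimension count.
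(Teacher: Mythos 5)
Your proposal does not follow the route behind this theorem, and as written it has a genuine gap at precisely the step that carries all the weight. The statement is quoted from Chen \cite{Ch2}, whose proof degenerates not to a type II union of two rational surfaces glued along an elliptic curve, but to a \emph{Bryan--Leung} $K3$ surface: an elliptic $K3$ surface $\pi\colon X_0\to\PP^1$ with a section $s$ and $24$ nodal fibers, polarized by $L_0=\mathcal O_{X_0}(s+gf)$, so that \emph{every} member of $|L_0|$ is completely reducible, a union of $s$ with $g$ fibers of $\pi$. This choice collapses the limit analysis to an explicit, finite local computation: the limits of rational curves are known curves, and the entire content of the proof is the local deformation study of the limit maps near the nodes of the singular fibers (including fibers taken with multiplicity), showing that only nodes can survive on a nearby general surface. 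The present paper runs the same scheme for Halphen surfaces in Proposition \ref{nodal}, degenerating $S$ to a rational elliptic surface on which every member of $|L_g|$ is the union of $E_9$ with $g$ fibers, and explicitly notes that Chen's arguments from \cite{Ch2} transfer to that setting.

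In your version the decisive assertion --- that on each rational surface $R_i$ ``classical transversality'' forces the relevant rational curves to be nodal, with cusps and worse singularities occurring in positive codimension --- is not available. For an anticanonical pair $(R_i,E)$ the rational curves of interest satisfy $-K_{R_i}\cdot C=C\cdot E$, which is small (equal to $1$ in the degeneration $S\cup_J S'$ used in this paper), so the space of genus-$0$ maps in a fixed class is $0$-dimensional: the curves are \emph{rigid} in their surface (compare Proposition \ref{dimension} with $\delta=g$), there is no family inside which a cusp is a codimension-one phenomenon, and rigid non-nodal rational curves on rational surfaces genuinely occur. Any dimension count must therefore be run over moduli (the nine points, or the gluing along $E$), and proving that the non-nodal strata there have positive codimension is exactly as hard as the theorem itself; your proposal explicitly defers this (``the delicate technical heart'') without supplying a mechanism, which means the proof is missing rather than sketched. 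Moreover, bounding $\dim B$ via stable maps to expansions of $R_1\cup_E R_2$ requires controlling \emph{all} components of the limiting moduli space, including multiple covers of fibers of the inserted ruled surfaces and branches tangent to $E$ --- the standard serious difficulty of type II limits, handled with considerable care in \cite{MPT} and in Sections \ref{tre}--\ref{quattro} of this paper --- and none of that is addressed. The Bryan--Leung degeneration is chosen in \cite{Ch2} precisely because it avoids these issues by making every limit curve completely reducible and explicitly analyzable; your framework does not provide a substitute for that step.
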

The above result is deeply linked to the natural question whether every curve in $|L|$ can be deformed to a nodal curve having the same geometric genus. A positive answer is again due to Chen and, defining the {\em equigeneric locus}
$$V^h(Y,L):=\left\{ C\in |L|\,\textrm{ s.t. }\, C \textrm{ is integral of geometric genus }h\right\}$$
for every $0\leq h\leq g$, it can be phrased in the following way. 
\begin{thm}[\cite{Ch3}]\label{dense}
Let $(Y,L)$ be a general primitively polarized $K3$ surface of genus $g$. Then, for every $0\leq \delta\leq g$, the Severi variety $V_{\delta}(Y,L)$ and the equigeneric locus $V^{g-\delta}(Y,L)$ have the same closure in $|L|$. 
\end{thm}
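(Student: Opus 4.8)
The plan is to prove the equality of closures by establishing the inclusion $V^{g-\delta}(Y,L)\subseteq\overline{V_{\delta}(Y,L)}$, the reverse inclusion being automatic since an integral curve with exactly $\delta$ nodes has geometric genus precisely $g-\delta$. Writing $h:=g-\delta$, the content thus becomes the following \emph{equigeneric smoothing} statement: every integral curve $C\in|L|$ of geometric genus $h$ is a limit, within curves of geometric genus $h$, of curves having exactly $\delta$ nodes. Before attacking this I would record the dimension bookkeeping that makes it consistent: by Theorem \ref{nonempty} and Proposition \ref{known}, $V_{\delta}(Y,L)$ is nonempty and smooth of dimension $h$, so $\overline{V_{\delta}(Y,L)}$ is a union of $h$-dimensional components of $V^{h}(Y,L)$. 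The smoothing statement, once proved, forces $V^{h}(Y,L)$ to be pure of dimension $h$ with nodal curves dense in each component, which is exactly what is wanted.

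For the controlling dimension bound I would pass to the normalization $\nu\colon\widetilde{C}\to Y$, with $\widetilde{C}$ smooth of genus $h$. First-order equigeneric deformations of $C$ are governed by global sections of the torsion-free quotient $\overline{N}_\nu:=N_\nu/(\text{torsion})$ of the normal sheaf, a line bundle on $\widetilde{C}$. Since $Y$ is a $K3$ surface one has $\det\nu^{*}T_Y\cong\mathcal O_{\widetilde{C}}$, and saturating $T_{\widetilde{C}}$ inside $\nu^{*}T_Y$ in the sequence $0\to T_{\widetilde{C}}\to\nu^{*}T_Y\to N_\nu\to 0$ identifies $\overline{N}_\nu\cong\omega_{\widetilde{C}}(-R)$, where $R$ is the ramification divisor of $\nu$ (supported where $\nu$ fails to be an immersion). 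Hence $\overline{N}_\nu\hookrightarrow\omega_{\widetilde{C}}$ and $h^{0}(\widetilde{C},\overline{N}_\nu)\leq h^{0}(\widetilde{C},\omega_{\widetilde{C}})=h$, bounding every component of $V^{h}(Y,L)$ by $h$; moreover a non-immersed general member (a cusp, say) makes $R>0$ and, for $R$ in general position, strictly lowers $h^{0}(\overline{N}_\nu)$, which is the mechanism pushing the generic singularity to be immersed and, by genericity, nodal.

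The heart of the smoothing statement I would reduce to a local-to-global problem. Locally, any planar singularity of $\delta$-invariant $d$ admits an equigeneric (equinormalizable) deformation whose generic member consists of $d$ nodes, so each singular point of $C$ can be independently smoothed to the correct number of nodes inside its $\delta$-constant stratum. To globalize these local deformations I would show that the natural map from the space of global equigeneric first-order deformations of $C$ in $|L|$ to the direct sum $\bigoplus_{p}T^{1}_{C,p}$ of local equigeneric deformation spaces is surjective, equivalently that an appropriate group $H^{1}$ on $Y$ twisting $N_{C/Y}$ vanishes. Granting this surjectivity, the $\delta$-nodal curves fill a dense open subset of the component of $V^{h}(Y,L)$ through $C$, giving $C\in\overline{V_{\delta}(Y,L)}$ and hence the theorem.

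The main obstacle is precisely this globalization: the required $H^{1}$-vanishing can genuinely fail for special $(Y,L)$, so controlling it is where the hypotheses that $(Y,L)$ be \emph{general} and primitively polarized become indispensable. Here I would take Chen's companion result, Theorem \ref{rational}, as the base case and bootstrap from it by degenerating the $K3$ surface to a suitable nodal or semistable limit on which the curves in the limit linear system can be forced to acquire only nodal singularities, transporting the conclusion back to the general fiber via the sharp dimension bound of the second paragraph. Upgrading the argument from rational curves ($h=0$) to arbitrary geometric genus $h$, so that the local equigeneric smoothings to nodes provably lift to a global equigeneric deformation in $|L|$ for every $0\leq\delta\leq g$, is the technical crux, and the step I expect to be hardest.
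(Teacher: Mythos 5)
Your reduction to the inclusion $V^{g-\delta}(Y,L)\subseteq\overline{V_{\delta}(Y,L)}$ and your dimension count via $\overline{N}_\nu\cong\omega_{\widetilde C}(-R)$ are both sound, and the latter does match the argument the paper runs in Proposition \ref{dimension}. But there is a genuine gap exactly at what you call the crux, and the mechanism you sketch for closing it is the one that is known to be unavailable. Note first that the paper does not reprove Theorem \ref{dense}: it is quoted from \cite{Ch3}, and the argument to compare with is the one transplanted to Halphen surfaces in Proposition \ref{nodal}. Chen does \emph{not} globalize local equigeneric deformations by surjecting onto $\bigoplus_p T^1_{C,p}$; the required $H^1$-vanishing is precisely what cannot be controlled, which is why the proof takes a different shape. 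The actual key statement, which your proposal leaves entirely unproved, is: every irreducible component of $\overline{V^{h}(Y,L)}$ with $h\geq 1$ contains an irreducible component of $V^{h-1}(Y,L)$ (\cite[Thm. 1.1]{Ch3}). Granting this, one descends from any component of the equigeneric locus down to $V^{0}$, where Theorem \ref{rational} gives nodality, and then climbs back up by induction on $\delta$: near a general $(\delta+1)$-nodal curve, $\overline{V^{g-\delta}}$ consists locally of the sheets obtained by smoothing one node, and since nodes on a $K3$ smooth independently ($h^{1}(L\otimes I_N)=0$, as behind Proposition \ref{known}), each sheet is generically $\delta$-nodal. Without the genus-descending containment this induction never starts, and your appeal to "genericity" after immersedness is also insufficient on its own: an immersed curve can have tacnodes or ordinary points of higher multiplicity, so immersed does not yield nodal.

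Second, the degeneration proving that containment is of a different nature from the one you describe. You propose a limit surface "on which the curves in the limit linear system can be forced to acquire only nodal singularities"; Chen instead specializes to a Bryan--Leung $K3$, an elliptic $K3$ $\pi\colon X_0\to\PP^1$ with section $s$ and $L_0=\mathcal O_{X_0}(s+gf)$, where \emph{every} member of $|L_0|$ is completely reducible, a union of $s$ with $g$ fibers, hence maximally far from nodal. The payoff is not nodality of limit curves but total combinatorial control of the limit linear system, which makes it possible to analyze exactly how genus-$h$ stable maps degenerate into section-plus-fiber configurations and to extract the component of $V^{h-1}$ inside each component of $\overline{V^{h}}$. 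The paper's Proposition \ref{nodal} runs precisely this argument with $X_0$ replaced by a rational elliptic surface (the blow-up of $\PP^2$ at the base points of a cubic pencil), where $E_9$ plays the role of the section. So your outline identifies the right ingredients (the $\omega_{\widetilde C}(-R)$ bound, Theorem \ref{rational} as base case, a degeneration) but omits the one nontrivial idea that makes the bootstrap work, and misdescribes what the degeneration is asked to do.
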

\vspace{0.3cm}
\noindent\textbf{Acknowledgements:} We are especially grateful to Xi Chen and Adrian Zahariuc for pointing out a mistake in a previous version of this paper and for suggesting Proposition \ref{cz}.  We also thank Thomas Dedieu and Edoardo Sernesi for numerous valuable conversations on the topic. We were supported by the Italian PRIN-2017  "Moduli Theory and Birational Classification" and by GNSAGA.

\section{Halphen surfaces and their Severi varieties}\label{due}
Let $S$ be the blow-up of $ \PP^2$ at nine general points
 $p_1,\dots, p_{9}$ 
and denote by $E_1,\dots, E_{9}$ the exceptional curves of this blow-up. As the points $p_i$ are general, there exists a unique plane cubic passing through the $p_i$'s, whose strict transform on $S$ we denote by $J$. Hence, $J$ is the only anticanonical divisor on $S$ and satisfies
$$
J\sim -K_{S}\sim 3l-E_1-\cdots -E_{9},
$$
where $\ell$  is the strict transform of a line in $\PP^2$.
For any fixed $g\geq 1$, let $C$ be the strict transform on $S$ of a so-called {\em Du Val curve of genus $g$}, that is, a plane curve of degree $3g$ having points of multiplicity $g$ at $p_1,\ldots,p_8$ and a point of multiplicity $g-1$ at $p_9$:
$$
C\sim3g\ell-gE_1-\cdots-gE_{8}-(g-1)E_{9}.
$$
Defining $L_g:=\mathcal O_{S}(C)\in \Pic(S)$, 
the linear system $|L_g|$ has dimension $g$ and its general element is a smooth irreducible curve of genus $g$. Since $C\cdot J=1$,  every irreducible curve $C\in |L_g|$ intersects $J$ at the same point, that we denote by $p_{10}(g)$. It turns out that $p_{10}(g)$ is the only base point of $|L_g|$ (cf. \cite[Lem. 2.4]{ABFS}) and is uniquely determined by the condition
$$gp_1+\ldots+gp_8+(g-1)p_9+p_{10}(g)\in |\mathcal O_{J}(3g\ell)|.$$
We will sometimes use the notation $L_0:=E_9$ and $p_{10}(0)=p_9$.

Let $\sigma: \tilde S\longrightarrow S$ be the blow-up of $S$ at $p_{10}(g)$. We still denote by $E_1,\dots, E_{9}$ the inverse images under $\sigma$ of the exceptional curves on $S$
and by $E_{10}$ the exceptional divisor of  $\sigma$. Let $\tilde J$ be the strict transform of $J$ and $\tilde C$ be the strict transform of  a curve $C\in |L_g|$. The following relations hold on $\tilde S$:

\be\label{dv}
\aligned
-&K_{\tilde S}\sim \tilde J\sim 3\ell-E_1-\cdots-E_{10},\\
&\tilde C\sim 3g \ell-gE_1-\cdots-gE_{8}-(g-1)E_{9}-E_{10}\,,\\
&\tilde C\cdot \tilde J=0\,.
\endaligned
\ee

The line bundle $\tilde L_g:=\mathcal O_{\tilde S}(\tilde C)$  is base-point-free \cite[Lem. 2.4]{ABFS} and thus defines a morphism from $\tilde S$ to a surface $\ov S_g\subset \PP^g$ having trivial dualizing sheaf, canonical hyperplane sections and a single elliptic
singularity $o$  resulting from the contraction of $\tilde J$. As in \cite{AB},  we call such a surface $\ov S_g\subset\PP^g$ a polarized Halphen surface of genus $g$. A general hyperplane section of $\ov S_g$ is a smooth irreducible curve of genus $g$ \cite[Lem. 2.4]{ABFS}, while a general hyperplane section of $\ov S_g$ passing through $o$ has a cusp at $o$. The following result is due to Arbarello-Bruno-Sernesi:

\begin{prop}[\cite{ABS}, Cor. 10.5]\label{duval}
If the points $p_1, \ldots, p_9$ are general, the surface $\ov S_g$ is the limit of smooth $K3$ surfaces in $\PP^g$. 
\end{prop}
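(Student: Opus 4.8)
The plan is to produce an \emph{embedded smoothing}: a flat family $\mathcal S\subset \PP^g\times \Delta$ over a disk $\Delta$ whose central fiber is $\ov S_g$ and whose general fiber is a smooth $K3$ surface of genus $g$. Equivalently, viewing $[\ov S_g]$ as a point of the Hilbert scheme $\mathrm{Hilb}(\PP^g)$ with the Hilbert polynomial of a degree $2g-2$, genus $g$ $K3$ (which $\ov S_g$ shares, having trivial dualizing sheaf, canonical genus $g$ hyperplane sections and $\chi(\mathcal O)=2$), I want to show that $[\ov S_g]$ lies in the closure of the locus parametrizing smooth $K3$ surfaces.

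First I would pin down the singularity. By \eqref{dv} the contracted curve $\tilde J\sim -K_{\tilde S}$ is a smooth elliptic curve with $\tilde J^2=9-10=-1$, so $o\in \ov S_g$ is a simple elliptic (Gorenstein) singularity of type $\tilde E_8$, analytically the hypersurface $x^2+y^3+z^6=0$; the cusp cut out on a general hyperplane section through $o$ is consistent with this local model. In particular $o$ is a smoothable hypersurface singularity, and any smoothing of it is Gorenstein with trivial relative dualizing sheaf.

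Next I would globalize the local smoothing. The first-order deformations of $\ov S_g$ fit into the local-to-global sequence
\[
0\to H^1(\ov S_g,\T_{\ov S_g})\to T^1(\ov S_g)\to H^0(\ov S_g,\T^1_{\ov S_g})\xrightarrow{\ \partial\ } H^2(\ov S_g,\T_{\ov S_g}),
\]
where $T^1(\ov S_g)$ denotes the global first-order deformation space and $\T^1_{\ov S_g}$ is the deformation sheaf supported at $o$, whose sections are the local deformations of the $\tilde E_8$ singularity and contain the smoothing direction. Computing $H^2(\ov S_g,\T_{\ov S_g})$ on the rational resolution $\tilde S$ --- via the sequences relating $\T_{\ov S_g}$ to the log tangent sheaf $\T_{\tilde S}(-\log\tilde J)$, and using that $\tilde S$ is rational with $H^1(\mathcal O_{\tilde S})=H^2(\mathcal O_{\tilde S})=0$ --- I would show that $\partial$ kills the smoothing class, so the local smoothing lifts to a global first-order deformation, and that deformations are unobstructed, so that it integrates. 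For the embedded statement I would run the same computation with the normal bundle, proving $H^1(\ov S_g,\N_{\ov S_g/\PP^g})=0$ (so the Hilbert scheme is smooth at $[\ov S_g]$) and that the tangent space $H^0(\ov S_g,\N_{\ov S_g/\PP^g})$ surjects onto the smoothing direction inside $H^0(\ov S_g,\T^1_{\ov S_g})$, so that a general point of the component is a smooth surface.

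Finally I would identify the general fiber. Being a Gorenstein smoothing of a surface with $\omega_{\ov S_g}\simeq \mathcal O_{\ov S_g}$, a general fiber $S_t$ is smooth with trivial canonical bundle; since $\chi(\mathcal O_{S_t})=\chi(\mathcal O_{\ov S_g})=2$ is constant in the flat family and $h^1(\mathcal O_{\ov S_g})=0$ (as $\tilde S$ is rational) is preserved by semicontinuity, $S_t$ has irregularity $0$ and is therefore a $K3$ surface, polarized of genus $g$ by the restriction of $\mathcal O_{\PP^g}(1)$. Thus $\ov S_g$ is a flat limit of smooth $K3$ surfaces in $\PP^g$. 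The main obstacle is precisely the globalization step: showing that the local smoothing of the $\tilde E_8$ singularity genuinely lifts to an embedded deformation, i.e. controlling the obstruction map $\partial$ together with $H^1(\N_{\ov S_g/\PP^g})$ (equivalently $H^2(\T_{\ov S_g})$). This is where the geometry of the anticanonical pair $(\tilde S,\tilde J)$ --- a reduced smooth anticanonical elliptic curve on a rational surface --- must be leveraged to force the relevant cohomology to vanish.
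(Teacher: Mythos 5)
First, a caveat on the comparison: the paper does not prove this statement at all --- Proposition \ref{duval} is imported verbatim from \cite{ABS}, Cor.~10.5 --- so your argument can only be measured against the literature, not against an internal proof. Your skeleton is the classical abstract-smoothing route, and most of it is sound. Since $\tilde J^2=-1$, the contraction does produce a degree-one simple elliptic singularity, i.e.\ $\tilde E_8$, which is a hypersurface (hence lci) singularity with smooth semiuniversal base and generically smooth deformations; granting $H^2(\ov S_g,\Theta_{\ov S_g})=0$, the local-to-global sequence makes $T^1(\ov S_g)\to H^0(\T^1_{\ov S_g})$ surjective and, since $\T^1_{\ov S_g}$ is punctual and $\T^2_{\ov S_g}=0$ for an lci singularity, kills the global obstruction space, so a smoothing exists; and your identification of the general fiber is complete and correct ($\omega_{\ov S_g}\simeq\mathcal O$, constancy of $\omega_t\cdot H$ and $\chi(\mathcal O)=2$ in the Gorenstein family, plus $h^1(\mathcal O_{S_t})\leq h^1(\mathcal O_{\ov S_g})=0$, force $S_t$ to be a K3 of genus $g$).

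The gap is the one you flag yourself: the decisive vanishings ($H^2(\ov S_g,\Theta_{\ov S_g})=0$, resp.\ $H^1(\N_{\ov S_g/\PP^g})=0$) are announced, not proven, and they are the entire content of the proof. On the resolution the log vanishing is indeed formal: from the exact sequence $0\to\Omega_{\tilde S}(\log\tilde J)(-\tilde J)\to\Omega_{\tilde S}\to\Omega_{\tilde J}\to0$ and Serre duality with $K_{\tilde S}=-\tilde J$ one gets $H^2(\tilde S,\Theta_{\tilde S}(-\log\tilde J))\simeq H^0(\Omega_{\tilde S}(\log\tilde J)(-\tilde J))^\vee\subset H^0(\Omega_{\tilde S})^\vee=0$. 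But this does not yet give the vanishing on $\ov S_g$: you must justify $\pi_*\Theta_{\tilde S}(-\log\tilde J)\simeq\Theta_{\ov S_g}$ (lifting of derivations, which holds here because the singularity is weighted homogeneous) and, in the Leray sequence, control the term $H^0(R^1\pi_*\Theta_{\tilde S}(-\log\tilde J))$, which requires Wahl-type local computations at the simple elliptic point; none of this appears in your sketch, so as written it is a program rather than a proof, though one that could be completed. Note finally that there is a cheaper route, implicit in Section \ref{quattro} of this very paper: take the d-semistable type II surface $Y_0=S\cup_J S'$, smooth it to a family of K3 surfaces by Friedman's theorem \cite{Fr}, and use the limit polarization $(L_g,L_0')$, which contracts $S'$ and maps $S$ (through $\tilde L_g$ on $\tilde S$) onto $\ov S_g\subset\PP^g$; this exhibits the embedded degeneration directly, with no obstruction-theoretic computation at all.
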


Halphen surfaces $\ov S_g$ as above share some common behaviour with $K3$ surfaces of Picard rank $1$. This depends on the following property, firstly exploited by Arbarello-Bruno-Farkas-Sacc\`a
\cite{ABFS}.
\begin{lem}\label{decomposition}
If the points $p_1, \ldots, p_9$ are general, for any fixed integer $g\geq 1$ the only possible decompositions of $L_g$ into two effective line bundles are of the form $$L_g\simeq\mathcal O_{S}(kJ)\otimes L_{g-k}$$ for some $0\leq k\leq g-1$.
\end{lem}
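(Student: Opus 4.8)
The plan is to exploit the special role of $J\sim -K_S$. Since the nine points are general, the unique plane cubic through them is smooth and irreducible, so $J$ is an irreducible curve with $J^2=0$ and is therefore nef; moreover $\mathcal{O}_J(J)=\mathcal{O}_S(3\ell-E_1-\cdots-E_9)|_J$ is a nontrivial (indeed non-torsion) degree-zero line bundle on the elliptic curve $J$, which is the essential genericity input. A direct computation in $\Pic(S)$ gives $L_g\cdot J=1$. Thus, given any decomposition $L_g\simeq A\otimes B$ into effective line bundles, nefness of $J$ forces $A\cdot J\geq 0$ and $B\cdot J\geq 0$, while $A\cdot J+B\cdot J=L_g\cdot J=1$; hence exactly one factor, say $A$, satisfies $A\cdot J=0$ and the other satisfies $B\cdot J=1$. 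Everything then reduces to proving that an effective class orthogonal to $J$ is a multiple of $J$.

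To this end I would write $A=\sum_i a_iC_i$ as a sum of irreducible curves with $a_i>0$; since $J$ is nef and $A\cdot J=0$, each component satisfies $C_i\cdot J=0$. As $J$ is a nonzero isotropic class in the lattice $\Pic(S)$ of signature $(1,9)$, the Hodge index theorem gives $C_i^2\leq 0$, with equality only if $C_i$ is proportional to $J$. On the other hand $C_i\cdot K_S=-C_i\cdot J=0$, so adjunction yields $C_i^2=2p_a(C_i)-2\geq -2$; hence $C_i^2\in\{0,-2\}$. If $C_i^2=0$ then $C_i$ is proportional to $J$, and since $J$ is primitive one has $C_i\sim qJ$ for an integer $q\geq 1$; feeding the restriction sequences $0\to\mathcal{O}_S((q-1)J)\to\mathcal{O}_S(qJ)\to\mathcal{O}_J(qJ)\to 0$ together with $h^0(\mathcal{O}_J(qJ))=0$ (which holds because $\mathcal{O}_J(J)$ is non-torsion, so all its powers are nontrivial) into an induction on $q$ gives $h^0(\mathcal{O}_S(qJ))=1$ for every $q\geq 0$; thus the only effective divisor in $|qJ|$ is $qJ$ itself and the only irreducible one is $J$, so $C_i=J$. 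It follows that $A\sim kJ$ for some $k\geq 0$.

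The case $C_i^2=-2$ must be excluded, and this is the main obstacle: it amounts to showing that a general $S$ contains no $(-2)$-curve. For a $(-2)$-class $D=a\ell-\sum_i b_iE_i$ one computes $\chi(\mathcal{O}_S(D))=1+\tfrac12\bigl(D^2-D\cdot K_S\bigr)=0$, equivalently $\binom{a+2}{2}=\sum_i\binom{b_i+1}{2}$; since general points impose independent conditions on plane curves of degree $a$ with the prescribed multiplicities, this forces $h^0(\mathcal{O}_S(D))=0$, so no such $D$ is effective. As there are only countably many $(-2)$-classes, each cutting out a proper closed condition on $(p_1,\dots,p_9)$, a general configuration carries no $(-2)$-curve and the case $C_i^2=-2$ does not occur. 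With $A\sim kJ$ established, a direct computation gives $B\sim L_g-kJ=L_{g-k}$ (with the convention $L_0:=E_9$); since $\ell$ is nef, effectivity of $B$ forces $\ell\cdot B=3(g-k)\geq 0$, i.e. $0\leq k\leq g$, and all these values do occur. This yields exactly the decompositions in the statement, the extreme case $k=g$ being $L_g\simeq\mathcal{O}_S(gJ)\otimes E_9$.
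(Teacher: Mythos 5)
Your skeleton agrees with the paper's own proof: both arguments use $L_g\cdot J=1$, $J^2=0$ and the nefness of the irreducible anticanonical curve $J$ to force exactly one effective factor to be orthogonal to $J$, and both reduce the lemma to the statement that an effective class with vanishing intersection against $J$ is a nonnegative multiple of $J$. The paper disposes of that statement by citation: it builds the genericity package --- no $(-2)$-curves and $h^0(S,\mathcal O_S(kJ))=1$ for all $k\geq 1$, i.e.\ $k$-generality in the sense of \cite{AB} --- into the hypothesis, and invokes a theorem of Nagata (\cite[Prop.~2.3]{ABFS}). You instead derive the Nagata statement from the Hodge index theorem plus adjunction, which correctly isolates those same two genericity inputs: your dichotomy $C_i^2\in\{0,-2\}$ for irreducible components orthogonal to $J$ is right, your induction giving $h^0(\mathcal O_S(qJ))=1$ from the non-torsion of $\mathcal O_J(J)$ is correct, and so is the computation $L_g-kJ=L_{g-k}$. (Your extra endpoint $k=g$, giving $L_g\simeq\mathcal O_S(gJ)\otimes\mathcal O_S(E_9)$, is indeed an effective decomposition under the paper's convention $L_0:=E_9$; the range $0\leq k\leq g-1$ in the statement silently omits it.)

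The genuine gap is your exclusion of $(-2)$-curves. The sentence ``since general points impose independent conditions on plane curves of degree $a$ with the prescribed multiplicities'' is not a proof: independence of the conditions imposed by general fat points is exactly the nontrivial content at this step (the Segre--Harbourne--Gimigliano--Hirschowitz circle of problems), and as a blanket assertion it is false --- two general double points do \emph{not} impose independent conditions on conics, since for $D=2\ell-2E_1-2E_2$ one has $\chi(\mathcal O_S(D))=0$ yet $h^0(\mathcal O_S(D))=1$, the divisor being the doubled line through the two points. That particular class is not a root, but it shows independence must be proved, not invoked; you would need it for the infinitely many classes with $D^2=-2$, $D\cdot K_S=0$, and for nine points this is precisely Nagata's classical theorem --- the very result the paper cites, so your argument assumes what it set out to establish. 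Even your countability-plus-semicontinuity framing still requires, for each such class, exhibiting one configuration of the nine points where the class is non-effective (for instance by transporting all real roots of the affine $E_8$ root system to simple ones via the Cremona/Weyl-group action, which preserves generality), and this you do not supply. If you replace this step by a citation of Nagata or of the $k$-generality results of \cite{AB}, as the paper does, the rest of your write-up stands and in fact makes the reduction behind the citation pleasantly explicit.
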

\begin{proof}
By choosing $p_1, \ldots, p_9$ general, we may assume that $S$ contains no ($-2$)-curves and that $h^0(S, \mathcal O_{S}(kJ))=1$ for every $k\geq 1$; in other words, $p_1, \ldots, p_9$ are chosen $k$-general in the sense of \cite[Def. 2.2]{AB} for every $k\geq 1$ (cf. also \cite{CD}). Let $L_g\simeq N\otimes M$ be a decomposition into two effective line bundles $N,M\in \mathrm{Pic}(S)$. Since $c_1(L_g)\cdot J=1$ and $(J)^2=0$, possibly by exchanging $N$ and $M$ we obtain $J\cdot c_1(N)=0$ and $J\cdot c_1(M)=1$. The statement thus follows by a theorem of Nagata (\cite[Prop. 2.3]{ABFS}), ensuring that under the genericity assumption the only effective divisors having vanishing intersection with $J$ are the nonnegative multiples of $J$.
\end{proof}

The above result was used by Arbarello-Bruno-Farkas-Sacc\`a in order to prove the following analogue of Lazarsfeld's Theorem.
\begin{thm}[\cite{ABFS}, Thm. 4.4]\label{petri}
If the points $p_1, \ldots, p_9$ are general, then a general curve $C\in |L_g|$ satisfies Petri's Theorem and all irreducible nodal curves in $|L_g|$ satisfy the Brill-Noether Theorem.
\end{thm}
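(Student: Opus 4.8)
The plan is to transplant Lazarsfeld's proof of the Brill--Noether--Petri generality of curves on a $K3$ surface to the Halphen setting, with Lemma~\ref{decomposition} playing the role of the Picard-rank-one hypothesis. The natural surface on which to run the argument is not $S$ or $\tilde S$ but the contracted Halphen surface $\ov S_g$: its dualizing sheaf is trivial and, being a flat limit of $K3$ surfaces by Proposition~\ref{duval}, it satisfies $\chi(\mathcal O_{\ov S_g})=2$, while a general smooth curve of $|L_g|$ and an irreducible nodal one meet $\ov S_g$ in its smooth locus away from the elliptic singularity $o$. Both assertions are then reduced to a single nonexistence statement: a base-point-free linear series on (the normalization of) a curve in $|L_g|$ cannot violate the Brill--Noether bound, and for the general smooth $C$ the Petri map $\mu_0$ can have no kernel.

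For the main step I associate to a curve $C\in|L_g|$ — smooth for the Petri statement, or irreducible nodal with normalization $\nu:\tilde C\to C$ of genus $g-\delta$ for the Brill--Noether statement — together with a putative base-point-free $g^r_d$ the Lazarsfeld--Mukai sheaf $E$ on $\ov S_g$, defined as on a $K3$ by dualizing the elementary modification $0\to E^\vee\to H^0(A)\otimes\mathcal O_{\ov S_g}\to \iota_*A\to 0$ (pushing $A$ forward through the partial normalization in the nodal case). It is reflexive of rank $r+1$ with $c_1(E)=L_g$, $c_2(E)=d$ and $h^0(E)=(r+1)+h^1(A)$. As the Mukai-vector computation involves only $c_1(E)^2=L_g^2$, $c_2(E)=d$ and $\chi(\mathcal O_{\ov S_g})=2$, Hirzebruch--Riemann--Roch yields $\chi(E,E)=2-2\rho$ with $\rho=\rho(g,r,d)$ computed with the arithmetic genus; since $g$ exceeds the geometric genus $g-\delta$ of $\tilde C$ and $r\geq 0$, one has $\rho(g,r,d)\leq\rho(g-\delta,r,d)$, so it suffices to bound the former. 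The triviality of the dualizing sheaf gives the self-duality $\Ext^2(E,E)\cong\Hom(E,E)^\vee$; hence, since $\dim\Ext^1(E,E)\geq 0$, a simple $E$ would force $\rho\geq 0$. Thus a violation $\rho<0$ — and, in the refined Petri situation, a nonzero $\ker\mu_0$, which by Lazarsfeld's lemma enlarges $\dim\Hom(E,E)$ — forces $\dim\Hom(E,E)\geq 2$, i.e. $E$ is not simple.

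The concluding step is a pure property of the surface. Non-simpleness of $E$ feeds into the standard structure theory of Lazarsfeld--Mukai bundles (Donagi--Morrison) and produces a decomposition $L_g\simeq M\otimes N$ into effective line bundles with $h^0(M),h^0(N)\geq 2$. By Lemma~\ref{decomposition} one of the two factors, say $M$, must equal $\mathcal O_S(kJ)$ for some $k\geq 1$; but the points $p_1,\dots,p_9$ were chosen $k$-general, so $h^0(\mathcal O_S(kJ))=1<2$, a contradiction. This rules out every Brill--Noether violation on the normalization of an arbitrary irreducible nodal curve and every failure of $\mu_0$ for the general smooth curve, proving both claims. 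That Petri is asserted only for the general curve, whereas Brill--Noether nonexistence holds for all irreducible nodal ones, reflects the fact that injectivity of $\mu_0$ is an open condition, while the numerical Brill--Noether bound is controlled uniformly by the surface through the decomposition obstruction above.

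The crux, and the only genuinely new input beyond Lazarsfeld's argument, is the passage off a smooth $K3$. One must make the Lazarsfeld--Mukai construction, the identity $\chi(E,E)=2-2\rho$, and the Serre self-duality rigorous on the singular Gorenstein surface $\ov S_g$, checking that reflexive sheaves there behave as on a $K3$ precisely because the relevant curves avoid $o$; alternatively one works on the smooth model $\tilde S$, where $K_{\tilde S}=-\tilde J$ and the correction terms are governed by $\tilde J\cdot\tilde L_g=0$, and where one must verify that $E$ is trivial along $\tilde J$ (equivalently, that it descends from $\ov S_g$) so that the isomorphism $\Ext^2(E,E)\cong\Hom(E,E)^\vee$ survives. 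The additional bookkeeping of pushing a line bundle from $\tilde C$ through the partial normalization while keeping $E$ reflexive is the remaining technical obstacle in the nodal case.
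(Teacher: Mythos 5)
You are reconstructing a theorem the paper only quotes: there is no proof of Theorem \ref{petri} in this paper, so the comparison must be with the argument of \cite{ABFS}. Your overall architecture does match that source (and the cousin argument the paper itself runs in Proposition \ref{irreducible}): Lazarsfeld--Mukai bundles formed on the contracted surface $\ov S_g$, locally free because the relevant curves avoid the elliptic singularity $o$; the K3-like numerology $\omega_{\ov S_g}\simeq\mathcal O$, $\chi(\mathcal O_{\ov S_g})=2$ giving $\chi(E,E)=2-2\rho$ and the self-duality $\Ext^2(E,E)\simeq\Hom(E,E)^\vee$; and non-simplicity killed by the Nagata-type Lemma \ref{decomposition}, since $h^0(\mathcal O_S(kJ))=1$. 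The Petri half of your sketch, modulo the standard Lazarsfeld degeneration step you gesture at, is sound along these lines.

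The nodal half, however, has a genuine gap, located exactly at the Chern-class bookkeeping for the pushed-forward series. If $\nu:\tilde C\to C$ is the normalization of a $\delta$-nodal $C$ and $A$ is a base-point-free $g^r_d$ on $\tilde C$, the Lazarsfeld--Mukai sheaf attached to $\nu_*A$ has $c_1(E)=[C]$ but $c_2(E)=d+\delta$, not $d$: on a surface with trivial canonical class, $c_2(E)=\chi(\nu_*A)-\chi(\mathcal O_C)=\bigl(d+1-(g-\delta)\bigr)+(g-1)=d+\delta$, i.e.\ $c_2$ is the degree of $\nu_*A$ as a rank-one torsion-free sheaf on the arithmetic-genus-$g$ curve. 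Consequently the Mukai pairing gives $\chi(E,E)=2-2\rho(g,r,d+\delta)$, and since $\rho(g,r,d+\delta)=\rho(g-\delta,r,d)+\delta$, simplicity of $E$ only yields $\rho(g-\delta,r,d)\geq-\delta$. Your inequality $\rho(g,r,d)\leq\rho(g-\delta,r,d)$ is true but never enters, because the computation does not produce $\rho(g,r,d)$. Indeed the conclusion you claim --- Brill--Noether generality of the \emph{normalizations} of all irreducible nodal curves --- is false: run verbatim on a Picard-rank-one K3, your argument would contradict the well-known existence of nodal curves whose normalizations are hyperelliptic (more generally $k$-gonal) of geometric genus $\geq 3$ (cf.\ \cite{CK}), a phenomenon this very paper relies on in Section \ref{cinque}, where curves with hyperelliptic normalization are said to move in dimension $2$. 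The statement being cited is Brill--Noether for the nodal curve \emph{itself}: one takes $A$ a line bundle (or rank-one torsion-free sheaf) on $C$ of degree $d$ with $h^0(A)=r+1$, so that $c_2(E)=d$ and $\chi(E,E)=2-2\rho(g,r,d)$ with $g$ the arithmetic genus, and then non-simplicity plus Lemma \ref{decomposition} gives the contradiction. With that reinterpretation (and the depth argument at the nodes keeping $E$ locally free, which you correctly flag), the proof closes; as written, the nodal case both proves the wrong statement and rests on an incorrect value of $c_2$.
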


We now investigate Severi varieties $V_\delta(S, L_g)$ and equigeneric loci $V^{h}(S,L_g)$ on $S$. We recall that the normalization $\tilde{V}^{h}(S,L_g) $ of $V^{h}(S,L_g)$ admits a universal family $\mathcal C\to \tilde{V}^{h}(S,L_g) $ together with a simultaneous resolution of singularities $\tilde{\mathcal C}\to\mathcal C$ (cf. \cite[I, Thm. 1.3.2]{Te} and also \cite[Thm. 1.5]{DS}). This implies the existence of an \'etale cover $W\to \tilde{V}^{h}(S,L_g) $ along with a generically injective morphism $w:W\to M_h(S,L_g)$ to the coarse moduli space of genus $h$ stable maps  in $|L_g|$. The image of $w$ consists of the irreducible components of $M_h(S,L_g)$ parametrizing stable maps which are smoothable, that is, can be deformed to a map from a nonsingular curve, birational to its image (cf. \cite{Va}). We denote by $M_h(S,L_g)^{\mathrm{sm}}$ the closure  in $M_h(S,L_g)$ of the image of $w$.

Viceversa, by \cite[I.6]{Ko1} the semi-normalization $\tilde{M}_h(S,L_g)^{\mathrm{sm}}$ of $M_h(S,L_g)^{\mathrm{sm}}$ admits a morphism 
\begin{equation}\label{semi}
\mu: \tilde{M}_h(S,L_g)^{\mathrm{sm}}\to \overline{V^{h}(S,L_g)}\subset |L_g|,
\end{equation} that maps a stable map $f:C\to S$ to its image $f(C)$.
\begin{prop}\label{dimension}
The following hold true:
\begin{itemize}
\item[(i)] For every $0\leq \delta \leq g$ the Severi variety $V_\delta(S, L_g)$ is nonempty and smooth of dimension $g-\delta$.
\item[(ii)] For every $0\leq h\leq g$ the equigeneric locus $V^h(S,L_g)$  and $M_h(S,L_g)^{\mathrm{sm}}$ have pure dimension $ h$. 
\item[(iii)] For every $0\leq h\leq g$ a general point $C$ in any irreducible component of $V^h(S,L_g)$ is immersed\footnote{A curve is called immersed if the differential of its normalization map is everywhere injective.}; equivalently, a general map $f$ in any irreducible component of $M_h(S,L_g)^{\mathrm{sm}}$ is unramified. In particular, both $V^h(S,L_g)$ and $M_h(S,L_g)^{\mathrm{sm}}$ are generically reduced.
\end{itemize}
\end{prop}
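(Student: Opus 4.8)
The plan is to prove the three parts in the order (i), (ii), (iii), treating the Halphen surface $S$ as closely as possible to a Picard-rank-one $K3$ and importing the deformation theory behind Proposition \ref{known} essentially verbatim. The one genuinely new ingredient is a cohomological vanishing replacing the triviality of the canonical bundle: since $L_g\simeq \mathcal O_S(K_S)\otimes L_{g+1}$ (because $L_{g+1}=\mathcal O_S(J)\otimes L_g$ and $-K_S\sim J$) and $L_{g+1}$ is nef and big ($L_{g+1}^2=2g+1>0$, while $\tilde L_{g+1}$ is base-point-free on $\tilde S$), Kawamata--Viehweg vanishing yields $H^i(S,L_g)=0$ for all $i>0$. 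Together with $H^1(S,\mathcal O_S)=H^2(S,\mathcal O_S)=0$ (as $S$ is rational), this is exactly what is needed to run the standard argument: for an integral $\delta$-nodal $C\in|L_g|$ with node scheme $N$, the sequence $0\to\mathcal O_S\to I_N\otimes L_g\to I_{N/C}\otimes L_g|_C\to 0$ gives $H^1(S,I_N\otimes L_g)\cong H^1(C,I_{N/C}\otimes L_g|_C)$, and one checks as in \cite{DS} that this vanishes, so that the nodes impose independent conditions, $h^0(S,I_N\otimes L_g)=g+1-\delta$, and the nodes smooth independently. This proves smoothness and dimension $g-\delta$ in (i). Nonemptiness then reduces, by independent smoothing, to producing a single rational ($g$-nodal) curve in $|L_g|$; I would obtain this either by degenerating a nodal rational curve on a general $K3$ of genus $g$ using Proposition \ref{duval} and Theorem \ref{nonempty} (taking care that the limit avoids the elliptic singularity $o$), or by transporting Chen's construction directly to $S$.

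For the dimension count in (ii) I would argue the two inequalities separately. The lower bound is formal: every irreducible component of $M_h(S,L_g)$ has dimension at least the expected dimension $-K_S\cdot L_g+(\dim S-3)(1-h)=J\cdot L_g+(h-1)=h$, and since the map $\mu$ in \eqref{semi} is generically finite onto its image (a general smoothable map of genus $h$ being birational to its image), every component of $V^h(S,L_g)$ and of $M_h(S,L_g)^{\mathrm{sm}}$ has dimension $\geq h$. The upper bound is where (iii) enters. If $C$ is a general member of a component of $V^h(S,L_g)$ and $f:\tilde C\to S$ is its normalization, then $\dim_{[f]}M_h(S,L_g)^{\mathrm{sm}}\leq h^0(\tilde C,N_f)$; when $f$ is an immersion, $N_f$ is a line bundle of degree $-K_S\cdot L_g-\deg T_{\tilde C}=2h-1$, whence $h^1(\tilde C,N_f)=h^0(\tilde C,K_{\tilde C}\otimes N_f^\vee)=0$ by negativity of the degree, and $h^0(\tilde C,N_f)=h$. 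This simultaneously forces the dimension to equal $h$, shows the deformations are unobstructed, and hence gives that $M_h(S,L_g)^{\mathrm{sm}}$ and $V^h(S,L_g)$ are smooth, in particular reduced, at such points. Purity and generic reducedness in (ii) therefore follow once (iii) is known.

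Consequently the whole weight of the proposition rests on (iii): the general member of each component of $V^h(S,L_g)$ is immersed. This is the step I expect to be the main obstacle, since the normal-sheaf bound degenerates for non-immersed maps (the torsion of $N_f$ can make $h^0$ of its locally free quotient exceed $h$), so one cannot exclude positive-dimensional families of cuspidal curves by numerics alone. The plan is to adapt Chen's argument from \cite{Ch2,Ch3}, replacing the holomorphic symplectic form of a $K3$ by the anticanonical datum of $S$: because $\Omega^2_S(J)\cong\mathcal O_S(K_S+J)\cong\mathcal O_S$, the surface carries a $2$-form that is non-degenerate away from $J$ with a simple pole along $J$, and every curve in $|L_g|$ meets $J$ transversally in the single point $p_{10}(g)$. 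Pulling this form back along a putative family of genus-$h$ curves produces, exactly as in the $K3$ case, the differential relations forcing a general such curve to be unramified; here the genericity of $p_1,\dots,p_9$ (via Lemma \ref{decomposition} and Theorem \ref{petri}) guarantees that the relevant curves are irreducible and split off no multiple of $J$, so that the local analysis near $p_{10}(g)$ causes no trouble. Granting this, the equivalence with unramifiedness of the general stable map, and the generic reducedness statements, are immediate from the exact sequence defining $N_f$.
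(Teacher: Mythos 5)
Your outline of (i) and of the lower bound in (ii) matches the paper (same exact sequence for (i), using $L_g|_C\simeq\omega_C(p_{10}(g))$ and $\omega_C(p_{10}(g))\otimes I_N\simeq \nu_*\omega_{\tilde C}(p)$, and the same computation $\chi(N_f)=h$; your Kawamata--Viehweg vanishing is correct but not needed). The genuine gap is in (iii), and it stems from a mistaken premise: you claim the normal-sheaf bound degenerates for non-immersed maps because ``the torsion of $N_f$ can make $h^0$ of its locally free quotient exceed $h$.'' This is backwards. The torsion-free quotient is the \emph{line bundle} $\overline N_f\simeq \omega_{\tilde C}\otimes f^*\mathcal O_S(J)(-R)\simeq\omega_{\tilde C}(p-R)$, so $h^0(\overline N_f)\leq h^0(\omega_{\tilde C}(p))=h$ always; it is $h^0(N_f)$ itself, torsion included, that can exceed $h$. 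The Arbarello--Cornalba theorem invoked by the paper (as in \cite[proof of Thm. 2.8]{DS}) bounds the tangent space to the equigeneric locus by $h^0(\overline N_f)$, not by $h^0(N_f)$. Hence, combining with the lower bound of (ii), one gets
$$h\leq \dim V=\dim T_{[C]}V\leq h^0(\omega_{\tilde C}(p-R))\leq h,$$
and the equality case forces $R=0$: since every curve in $|L_g|$ meets $J$ transversally at the single point $p_{10}(g)$, the point $p$ is not in $\supp R$, and $\omega_{\tilde C}(p)$ is globally generated off $p$, so any nonzero $R$ would strictly drop $h^0$. This proves (ii) and (iii) \emph{simultaneously}, with no circularity and no input beyond the AC bound. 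Your proposed substitute for (iii) --- adapting Chen's degeneration arguments via the $2$-form with simple pole along $J$ --- is not carried out, rests on the incorrect premise above, and in any case misdescribes Chen's technique: his degenerations to elliptically fibered surfaces are used in the paper only for the genuinely harder statement that nodal curves are dense in equigeneric loci (Proposition \ref{nodal}), not for immersedness.

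Two smaller points. Your structure makes the upper bound in (ii) conditional on (iii), which you then leave unproved, so as written nothing in (ii)--(iii) is actually established; in the paper both follow at once from the displayed chain of inequalities. And for nonemptiness in (i) the paper simply cites \cite[Thm. B]{GLS}, which applies directly to blow-ups of $\PP^2$; your alternative of degenerating nodal rational curves from a general $K3$ to the Halphen surface would require controlling limits of stable maps through the elliptic singularity $o$ and is not justified as stated.
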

\begin{proof}
We recall that the expected dimension $V_\delta(S, L_g)$ is $g-\delta$. The nonemptiness statement in (i) follows from \cite[Thm. B]{GLS}. By standard deformation theory, the projective tangent space to $V_\delta(S,L_{g})$ at a point $C$ is isomorphic to $\mathbb P(H^0(S,L_{g}\otimes I_N))$, where $N$ is the scheme of nodes of $C$. Hence, $V_\delta(S, L_g)$ is smooth at $C$ of dimension $g-\delta$ if and only if $h^0(L_g\otimes I_N)=g+1-\delta$, or equivalently, $h^1(L_g\otimes I_N)=0$. This vanishing can be easily deduced by the short exact sequence
$$
0\to \mcO_{S}\to L_g\otimes I_N\to \omega_C(p_{10}(g))\otimes I_N\to 0,
$$
using the isomorphism $\omega_C(p_{10}(g))\otimes I_N\simeq \nu_*\omega_{\tilde C}(p)$, where $\nu:\tilde C\to C$ is the normalization map and $p=\nu^{-1}(p_{10}(g))$.

As concerns part (ii), let $C$ be a general point in any irreducible component $V$ of $V^h(S,L_g)$ and let $f:\tilde C\to S$ be the stable map defined as the composition of the normalization map $\nu:\tilde C\to C$ with the inclusion $C\subset S$. The discussion above the statement of this proposition yields that $\dim_{[C]}V=\dim_{[f]} M_h(S,L_g)$ and, by  standard deformation theory,  the latter is bounded below by $\chi(N_f)$, where $N_f$ is the normal sheaf to $f$ defined by the short exact sequence 
$$
0\to T_{\tilde C}\to f^*T_{S}\to N_f\to 0.
$$ 
It is then easy to check that $\chi(N_f)=\chi(\omega_{\tilde C}(p))=h$ and thus $\dim V\geq h$.

In order to prove equality, we apply a result by Arbarello and Cornalba \cite[p. 26]{AC} as in \cite[proof of Thm. 2.8]{DS}  getting
$$
 \dim V=\dim T_{[C]}V\leq h^0(\tilde C, \overline N_f)=h^0(\omega_{\tilde C}(p-R))\leq h,
$$
where $\overline N_f$ denotes the quotient of $N_f$ by its torsion subsheaf, which coincides with the zero divisor  $R\subset \tilde C$ of the differential of $f$. Since $\omega_{\tilde C}(p)$ is globally generated off $p$ and $p_{10}(g)=f(p)$ is a smooth point of $C$, we conclude that $\dim V=h$ (thus getting (ii)) and $R=0$. Hence, $C$ is immersed and this yields (iii) because $T_{[f]}M_h(S,L_g)=h^0(N_f)=h$ and $\mu$ is an isomorphism locally around $[f]$.\end{proof}

It is natural to ask whether the closure in $|L_g|$ of the Severi variety $V_\delta(S, L_g)$ coincides with that of the equigeneric locus $V^{g-\delta}(S, L_g)$, as it happens on a general $K3$ surface. The following result generalizes Theorem \ref{dense} to our setting.
\begin{prop}\label{nodal}
If the points $p_1, \ldots, p_9$ are general, then for every $g\geq 1$ and $0\leq \delta\leq g$ one has the equality  $$\overline{V_\delta(S, L_g)}=\overline{V^{g-\delta}(S, L_g)}$$ in the linear system $|L_g|$. 
\end{prop}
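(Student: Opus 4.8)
The plan is to separate the trivial inclusion from the substantive one and to reduce the latter to a statement about the general member of each component of the equigeneric locus. Set $h:=g-\delta$. The inclusion $\ov{V_\delta(S,L_g)}\subseteq \ov{V^{h}(S,L_g)}$ is immediate, since an integral curve with exactly $\delta$ nodes has geometric genus $g-\delta=h$. For the reverse inclusion, I would first note that by Proposition \ref{dimension}(i)--(ii) both $V_\delta(S,L_g)$ and $V^{h}(S,L_g)$ are equidimensional of dimension $h$, and that $V_\delta(S,L_g)$ is exactly the locus inside $V^{h}(S,L_g)$ of curves all of whose singularities are nodes, hence is open there. It therefore suffices to show that the general member $C$ of every irreducible component $V$ of $V^{h}(S,L_g)$ is $\delta$-nodal: granting this, $V_\delta(S,L_g)$ is dense in each such $V$, and taking the union of closures over all components of $V^{h}(S,L_g)$ yields the asserted equality.

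So I would fix an irreducible component $V$ with general member $C$ and normalization $\nu\colon \tilde C\to C$. By Proposition \ref{dimension}(iii) the curve $C$ is immersed, so all its local branches are smooth and its only singularities are points where two or more smooth branches meet; moreover $\sum_p \delta_p(C)=g-h=\delta$, where $\delta_p$ denotes the local $\delta$-invariant. The goal is to show that, as $C$ varies in $V$, each such singular point generically splits into $\delta_p$ transverse nodes, so that the general member of $V$ acquires exactly $\delta$ nodes. The local input here is classical (cf.\ \cite{Te}): inside the versal deformation of any reduced planar curve singularity the equigeneric ($\delta$-constant) stratum has codimension $\delta_p$ and its general member consists of $\delta_p$ nodes. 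Thus the statement reduces to a global-to-local surjectivity, namely that the first-order equigeneric deformations of $C$ inside $|L_g|$, which form $T_{[C]}V$, surject onto the product over the singular points of the tangent spaces to the local $\delta$-constant strata.

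This surjectivity is the heart of the matter, and it is here that I would adapt Chen's density argument (Theorem \ref{dense}, \cite{Ch2,Ch3}) to the Halphen surface. As in the proof of Proposition \ref{dimension}, the equigeneric first-order deformations of $C$ are computed by $H^0(\tilde C,\ov N_f)\cong H^0(\tilde C,\omega_{\tilde C}(p))$, and the obstruction to realizing the prescribed branch-separations is governed by an $H^1$ of the same adjoint-type sheaf on $\tilde C$, which I expect to vanish via the exact sequence $0\to \mcO_S\to L_g\otimes I_N\to \omega_C(p_{10}(g))\otimes I_N\to 0$ together with $h^1(\mcO_S)=0$ and the Petri/Brill--Noether regularity of curves in $|L_g|$ (Theorem \ref{petri}). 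The genericity of $p_1,\dots,p_9$ enters in two ways: through Lemma \ref{decomposition} it rules out unexpected effective decompositions of $L_g$, hence any positive-dimensional family of non-nodal curves forced by a special sublinear system; and it guarantees that curves in $|L_g|$ behave cohomologically like curves on a Picard-rank-one $K3$ surface, which is precisely what makes Chen's argument transfer with essentially no change.

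I expect the main obstacle to be exactly this surjectivity step, i.e.\ showing that $S$, through $|L_g|$, offers enough honest deformations to split every non-nodal singularity into nodes while preserving the geometric genus. The immersedness and the dimension count of Proposition \ref{dimension} exclude cuspidal and ramified behaviour, but they do not by themselves rule out tacnodes or ordinary multiple points, since such singularities contribute the same $\delta$-invariant as the corresponding clusters of nodes; separating them genuinely requires the cohomological vanishing above, i.e.\ the Halphen analogue of Chen's theorem, and cannot be achieved by any purely numerical argument.
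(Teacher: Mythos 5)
Your opening reduction (pass to a component $V$ of $V^{h}(S,L_g)$, use Proposition \ref{dimension} for equidimensionality and immersedness of the general member, then try to show the general member is $\delta$-nodal) agrees with the paper up to that point, but the step you yourself call the heart of the matter is a genuine gap, and it cannot be closed by the vanishing you invoke. The cohomology you propose to use, namely $H^1$ of the adjoint-type sheaf $\omega_{\tilde C}(p)$ coming from the sequence $0\to \mcO_S\to L_g\otimes I_N\to \nu_*\omega_{\tilde C}(p)\to 0$, vanishes trivially but only recomputes $T_{[C]}V$ and the smoothness statement already contained in Proposition \ref{dimension}; it says nothing about the map from $T_{[C]}V$ to the tangent spaces of the local $\delta$-constant strata. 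Surjectivity there is governed by $H^1(L_g\otimes\mathcal J)=0$ for an equisingular/Jacobian-type ideal $\mathcal J$ whose colength is of the order of the Tjurina numbers of the singularities, hence grows linearly in $\delta$; such vanishings are exactly the mechanism behind the known small-$\delta$ results and fail to be available once $\delta$ is large (compare Proposition \ref{irreducible}, which needs $\delta\leq \frac{1}{6}g-\frac{1}{12}$, and the Clifford-index approach of \cite{DS}, which requires $g>2\delta$). The failure is starkest at $\delta=g$, a case the proposition covers: there $\dim V=0$, so $T_{[C]}V=0$ and no first-order deformation inside $V$ exists to split anything, yet the statement asserts that every such component consists of nodal rational curves. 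This is precisely Chen's theorem on nodality of rational curves, which is known not to follow from deformation-theoretic vanishing arguments; so your strategy can at best recover the range already treated in \cite{Ke,CD2}, not $0\leq\delta\leq g$.

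The paper's actual proof replaces your local-to-global surjectivity with a degeneration argument. Specializing $p_1,\ldots,p_9$ to the base locus of a general pencil of plane cubics degenerates $S$ to a rational elliptic surface $q:S_0\to\PP^1$ with twelve nodal fibers, on which $E_9$ is a section and every member of $|L_g|$ is completely reducible (the union of $E_9$ with $g$ fibers) --- the analogue of a Bryan--Leung $K3$. Chen's arguments then transfer with essentially no change and give two facts: (a) every component of $\ov{V^h(S,L_g)}$ with $h\geq 1$ contains a component of $V^{h-1}(S,L_g)$ (as in \cite{Ch3}), and (b) all rational curves in $|L_g|$ are nodal (as in \cite{Ch2}, with the family of surfaces $\mathcal S\to\Delta$ degenerating to $S_0$ in place of a family of $K3$s). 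Iterating (a) down to $h=0$ and applying (b), every component of $V^h(S,L_g)$ contains a nodal curve in its closure, and since nodes can only persist or smooth under deformation, the general member of each component is nodal. Your use of Teissier's local description of the $\delta$-constant stratum is correct as a local statement, but the global input must come from this degeneration, not from an $H^1$-vanishing on $\tilde C$.
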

\begin{proof}
We follow Chen's proof of the analogous result for a general genus $g$ polarized $K3$ surface \cite[Cor. 1.2]{Ch3}. Let $V$ be any irreducible component of the equigeneric locus $\overline{V^h(S, L_g)}$ with $0\leq h\leq g$. In order to prove that a general point of $V$ parametrizes a nodal curve, it is enough to show that $V$ contains a component of $V^{h-1}(S, L_g)$ as soon as $h\geq 1$, and that all rational curves in $|L_g|$ are nodal. Both the statements were proved  for a general genus $g$ polarized $K3$ surface by Chen (in \cite[Thm. 1.1]{Ch3} and \cite[Thm. 1.1]{Ch2}, respectively), by specialization to a so-called Bryan-Leung $K3$ surface, that is, a $K3$ surface  $X_0$ admitting an elliptic fibration $\pi:X_0\to \PP^1$ with a section $s$ and $24$ nodal singular fibers. If $f$ is a fiber, the line bundle $L_0:=\mathcal O_{X_0}(s+gf)$ is a genus $g$ polarization on $X_0$ and every element in $|L_0|$  is completely reducible, that is, it is union of $s$ and $g$ fibers of $\pi$.

We now exhibit a limit of our surfaces $S$ that appeared in \cite[\S 4.1]{ABFS} and is very similar to a Bryan-Leung $K3$ surface. By specializing the points $p_1, \ldots, p_9\in \mathbb P^2$ to the base locus of a general pencil of plane cubics, the surface $S$ specializes to a rational elliptic surface $q: S_0\to \mathbb P^1$; the fibers of $q$ are the anticanonical divisors of $S_0$ and thus $q$ admits precisely $12$ nodal singular fibers. It is easy to verify that on $S_0$ the exceptional divisor $E_9$  becomes a section of $q$ and every element in the linear system $|L_g|$ is the union of  $E_9$ with $g$ fibers of $q$. Chen's proof of \cite[Thm. 1.1]{Ch3} works in our setting with no change, yielding that on $S_0$ (and thus on a general $S$) every component of $\overline{V^h(S, L_g)}$ with $h\geq 1$ contains a component of $V^{h-1}(S, L_g)$. Also the proofs in \cite{Ch2} still work if, instead of  a family of $K3$ surfaces whose central fiber is a Bryan-Leung  $K3$ surface, one considers a family of surfaces $\mathcal S\to \Delta$ whose general fibers are general $S$ and whose central fiber is $S_0$. The only difference that is worth remarking concerns \cite[Prop. 2.1]{Ch2}, whose proof becomes even simpler in our case because every vector of the space $H^1(T_{S_0})$ parametrizing first order deformation of $S_0$ can be realized as the Kodaira-Spencer class of a projective family $\mathcal S$.
\end{proof}

The following result is a generalization of \cite{Ke,CD2}, ensuring irreducibility of Severi varieties in $|L_g|$ when $\delta$ is small with respect to $g$.
\begin{prop}\label{irreducible}
If $\delta\leq\frac{1}{6}g-\frac{1}{12}$, then $V_\delta(S,L_g)$ is irreducible.
\end{prop}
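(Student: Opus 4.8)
\subsection*{Proof plan}

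The plan is to realize $V_\delta(S,L_g)$ as an open subvariety of a projective bundle over an irreducible base and to read off irreducibility directly. Sending an integral $\delta$-nodal curve to its reduced set of nodes defines a morphism $n\colon V_\delta(S,L_g)\rar U$, where $U\subset S^{[\delta]}$ is the open, smooth, irreducible locus of the Hilbert scheme parametrizing $\delta$ distinct points of $S$. For $Z\in U$ write $Z'\subset S$ for the length-$3\delta$ subscheme defined by $I_Z^2$; the fibre $n^{-1}(Z)$ is the open subset of $\PP\big(H^0(S,L_g\otimes I_{Z'})\big)$ consisting of curves that are integral and have ordinary nodes exactly along $Z$. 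First I would introduce the incidence variety
$$P:=\left\{(Z,C)\ :\ Z\in U,\ C\in\PP\big(H^0(S,L_g\otimes I_{Z'})\big)\right\}$$
with its projection $p\colon P\rar U$, and observe that $n$ identifies $V_\delta(S,L_g)$ with the locally closed subset of $P$ consisting of pairs $(Z,C)$ with $C$ integral and having exactly $\delta$ nodes, located at $Z$.

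The crux, and what I expect to be the main obstacle, is the cohomology vanishing $h^1(S,L_g\otimes I_{Z'})=0$. Whenever it holds, the sequence $0\rar L_g\otimes I_{Z'}\rar L_g\rar \mcO_{Z'}(L_g)\rar 0$ together with $h^1(S,L_g)=0$ gives $h^0(S,L_g\otimes I_{Z'})=g+1-3\delta$, so over the (general) locus where it holds $p$ has irreducible fibres of constant dimension $g-3\delta$. The numerical hypothesis is exactly what feeds this vanishing: since $L_g^2=2g-1$ and $L_g\cdot J=1$, the bound $\delta\leq\frac16 g-\frac1{12}$ is equivalent to $\mathrm{length}(Z')=3\delta\leq\tfrac14 L_g^2$. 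I would then prove the vanishing by a general-position argument of Reider type: a failure would force an effective divisor $D\subsetneq S$ carrying part of $Z'$ and violating a numerical inequality of the form $D\cdot(L_g-D)<0$; by Lemma \ref{decomposition} and the genericity of $p_1,\dots,p_9$ the surface $S$ carries no $(-2)$-curves and $J$ is the only effective class with $J^2=0$ and $J\cdot L_g=1$, which excludes any such $D$ once $3\delta\leq\tfrac14 L_g^2$. The delicate point is to promote this from \emph{general} $Z$ to the node scheme $Z$ of an \emph{arbitrary} integral $\delta$-nodal curve in the range, using that such a curve already lies in $|L_g\otimes I_{Z'}|$ (a Cayley--Bacharach/residuation input); this uniform vanishing is what rules out spurious components.

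It then remains to see that $V_\delta(S,L_g)$ is a dense open subset of $P$ and to conclude. For general $Z$ the system $|L_g\otimes I_{Z'}|$ is base-point-free away from $Z$, again by the vanishing, so Bertini makes its general member smooth off $Z$ and integral, while surjectivity of $H^0(S,L_g)\rar \mcO_{Z'}(L_g)$ lets one prescribe the quadratic part freely at each point of $Z$, producing an ordinary node there and no worse singularity. Hence over general $Z$ the fibre $n^{-1}(Z)$ is a dense open subset of the irreducible projective space $\PP^{\,g-3\delta}$, and the component $V_0$ of $V_\delta(S,L_g)$ containing these fibres dominates $U$ with $\dim V_0=g-\delta$. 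By Proposition \ref{dimension}(i) the variety $V_\delta(S,L_g)$ is smooth of pure dimension $g-\delta$, so its irreducible components are disjoint; any component $V_1\neq V_0$ would miss the general fibre, whence $\dim\ov{n(V_1)}\leq 2\delta-1$ and a general fibre of $V_1\rar \ov{n(V_1)}$ would have dimension $\geq g-3\delta+1$, forcing $h^1(S,L_g\otimes I_{Z'})\geq 1$ at the node scheme of a general curve of $V_1$. This contradicts the uniform vanishing of the previous paragraph, so $V_0$ is the only component and $V_\delta(S,L_g)$ is irreducible.
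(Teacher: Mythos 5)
Your plan has the same skeleton as the paper's proof: realize $V_\delta(S,L_g)$ as an open subset of a projective bundle over the punctual locus in $S^{[\delta]}$, reduce everything to the vanishing $h^1(S,L_g\otimes I_{Z'})=0$ for the length-$3\delta$ schemes $Z'$ defined by $I_Z^2$, and observe that the hypothesis $\delta\leq\frac16 g-\frac1{12}$ is exactly the numerical threshold $3\delta\leq\frac14 L_g^2$. The paper carries out the vanishing via Tyurin's construction rather than a Reider black box: a minimal $L_g$-stable subscheme $w$ of length $d\leq 3\delta$ produces a rank-$2$ bundle $0\to\mathcal{O}_S\to E\to L_{g-1}\otimes I_w\to 0$ (note $L_g\otimes K_S\simeq L_{g-1}$), Riemann--Roch gives $\chi(E\otimes E^\vee)=2g-3-4d+4\geq 2g-12\delta+1\geq 2$, and the resulting non-stability of $E$ is then excluded using the rigid Picard structure of Lemma \ref{decomposition} --- morally the same instability mechanism underlying Reider's theorem, so your identification of the right numerical condition is on target.

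However, there is a genuine gap at the heart of your sketch: the uniform vanishing over your $U$ (all $\delta$-tuples of distinct points) is \emph{false}, and your Reider-type exclusion cannot repair it. If $z\in Z\cap J$, then every curve in $|L_g\otimes I_z^2|$ must contain $J$: since $L_g\cdot J=1$, a curve not containing $J$ meets $J$ with total multiplicity $1$ and cannot be singular at a point of $J$. Hence $h^0(L_g\otimes I_z^2)\geq h^0(L_{g-1}\otimes I_z)\geq g-1>g+1-3$, so $h^1\neq 0$ already for $\delta=1$. Correspondingly, the divisor $D=J$, with $J^2=0$ and $J\cdot L_g=1$, is precisely a Reider-exceptional class, so the claim that no bad $D$ exists once $3\delta\leq\frac14 L_g^2$ is untenable as stated. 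What saves the argument --- and what the paper uses --- is that node schemes of curves in $V_\delta(S,L_g)$ automatically avoid $J$ (again because $C\cdot J=1$), so one works over the open locus $U_\delta\subset S^{[\delta]}$ of schemes \emph{disjoint from} $J$ and proves the vanishing only there; the disjointness is then used inside the instability analysis, where the destabilizing quotient $M$ acquires a section vanishing on a divisor containing $w$, ruling out $M\simeq\mathcal{O}(kJ)$ since $h^0(\mathcal{O}(kJ))=1$ and $kJ$ cannot contain $w$. Your proposed Cayley--Bacharach/residuation fix addresses the wrong delicate point: promoting from general to arbitrary $Z$ is not the issue (Tyurin's construction is uniform from the start); the issue is the position of $Z$ relative to $J$. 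A second, smaller omission: since $K_S=-J\neq 0$ here, the dichotomy from $\chi(E\otimes E^\vee)\geq 2$ has an extra case $h^2(E\otimes E^\vee)=h^0(E\otimes E^\vee\otimes K_S)\geq 1$, which the paper must (and does) treat by a separate Hom-space computation; an off-the-shelf adjoint-system statement does not dispose of it on this non-minimal rational surface. Once the uniform vanishing is established over $U_\delta$, your concluding dimension count ruling out components not dominating the base is correct and matches the paper's appeal to \cite{Ke}, combined with Proposition \ref{dimension}.
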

\begin{proof}
  Let $U_{\delta}\subset S^{[\delta]}$ be the open subset parametrizing $0$-dimensional subschemes consisting of $\delta$ distinct points none of which lies on $J$. The nodes of any curve $C\in V_\delta(S,L_g)$ define a point in $U_{\delta}$ because they all lie outside of $J$ as $C\cdot J=1$. As in \cite[App. A, proof of Thm. A.0.6]{Ke}, the Severi variety is an open subset of a projective bundle over $U_{\delta}$ as soon as $H^1(L_g\otimes I_z^2)=0$ for all $z\in U_{\delta}$. We will show that $H^1(L_g\otimes I_w)=0$ for every $w\in S^{[3\delta]}$ whose support is disjoint from $J$. By contradiction, if this is not the case, up to replacing $w$ with a subscheme of length $d\leq 3\delta$, we may assume that $h^1(L_g\otimes I_w)=1$ (use \cite[Lem. 1.2]{BS}) and $h^1(L_g\otimes I_{w'})=0$ for every proper subscheme $w'$ of $w$ (that is, $w$ is $L_g$-stable in the sense of Tyurin \cite[Def. 1.2]{Ty}). By \cite[Lem. 1.2]{Ty} there exists a rank $2$ vector bundle $E$ fitting into an extension
\begin{equation}\label{extension}
0\to \mathcal O_S\to E\to L_{g-1}\otimes I_w\to 0,
\end{equation}
where we have used that $L_g\otimes K_{S}\simeq L_{g-1}$. Since $c_1(E)=c_1(L_{g-1})$ and $c_2(E)=d\leq3\delta$, the Riemann-Roch formula yields
$$
\chi(E\otimes E^\vee)=c_1(E)^2-4c_2(E)+4\chi(\mathcal O_{S})=2g-3-4d+4\geq2g-12\delta+1\geq 2,
$$
and thus either $h^0(E\otimes E^\vee)\geq 2$ or $h^2(E\otimes E^\vee)=h^0(E\otimes E^\vee\otimes K_{S})\geq 1$. In both cases, $E$ is not $\mu_{L_{g-1}}$- stable and thus sits in a destabilizing short exact sequence
\begin{equation}\label{destabilizing}
0\to N\to E\to M\otimes I_{\xi}\to 0,
\end{equation}
 where $\xi\subset S$ is a $0$-dimensional subscheme and $N,M\in \Pic(S)$ satisfy $$\mu_{L_{g-1}}(N)\geq \mu_{L_{g-1}}(E)=\frac{2g-3}{2}\geq \mu_{L_{g-1}}(M).$$ In particular, one gets $h^0(N^\vee)=0$. We will use short exact sequence \eqref{destabilizing} and Lemma \ref{decomposition} to reach a contradiction.  As in \cite[Lem 3.6]{Kn}, by tensoring \eqref{extension} with $N^\vee$ and taking global sections, one obtains $h^0(M\otimes I_w)>0$ and thus $M$ possesses a global section vanishing along a divisor that contains $w$; in particular, $M$ is effective and $M\not\simeq \mathcal O(kJ)$ for any $k\geq 0$. If \eqref{destabilizing} splits, the same holds true for $N$ by inverting the roles of $N$ and $M$. Since $c_1(L_{g-1})=c_1(N)+c_1(M)$, this would contradict Lemma \ref{decomposition}: we conclude that \eqref{destabilizing} does not split. 
 
 In the case where $h^0(E\otimes E^\vee)\geq 2$, by standard computations (cf., e.g., \cite[Lem. 3.4]{AF}) one concludes that $h^0(N\otimes M^\vee)>0$ and thus $N$ is effective, too. By Lemma \ref{decomposition}, we get that $N\simeq \mathcal{O}(kJ)$ and $M\simeq L_{g-k}'$ for some $0\leq k\leq g-1$ and this contradicts the inequalities on the slopes. 
 
 In order to arrive at the same conclusion in the case where $h^0(E\otimes E^\vee\otimes K_{S})\geq 1$, we tensor \eqref{extension} with $K_{S}$ and then apply $\mathrm{Hom}(E,-)$ in order to get $$h^0(E\otimes E^\vee\otimes K_{S})\leq \dim \mathrm{Hom}(E,N\otimes K_{S})+ \dim \mathrm{Hom}(E,M\otimes K_{S}\otimes I_\xi).$$
By applying $\mathrm{Hom}(-,M\otimes K_{S}\otimes I_\xi)$ to \eqref{extension} and using the fact that $\mathrm{Hom}(N,M\otimes K_{S}\otimes I_\xi)=0$ as $\mu_{L_{g-1}}(N)>\mu_{L_{g-1}}(M\otimes K_{S})$, one obtains that $\mathrm{Hom}(E,M\otimes K_{S}\otimes I_\xi)=0$. Analogously, applying $\mathrm{Hom}(-,N\otimes K_{S})$  to \eqref{extension}, one shows that $1\leq \mathrm{Hom}(E,N\otimes K_{S})\simeq H^0(M^\vee\otimes N\otimes K_{S})$; hence, $N$ is effective yielding the same contradiction as above.
\end{proof}
The following result controls the intersection of two irreducible components of $\overline{V_\delta(S,L_g)}$.
\begin{prop}\label{cohen}
Fix $g\geq 2$ and $0\leq \delta\leq g-1$. Let $V$ and $W$ be two intersecting components of $\overline{V_\delta(S,L_g)}$. Then every irreducible component of $V\cap W$ not contained in $|L_{g-1}|$ has pure codimension $1$ and is generically reduced. 
\end{prop}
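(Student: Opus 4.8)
The plan is to realize $\overline{V_\delta(S,L_g)}$, away from the hyperplane $|L_{g-1}|$, as the image of a degeneracy locus on a smooth variety, and then to exploit that degeneracy loci of the expected dimension are Cohen--Macaulay. Set $\mathcal U:=|L_g|\setminus|L_{g-1}|$. By Lemma \ref{decomposition} every decomposition of $L_g$ involves a multiple of $J$, so a curve not containing $J$ is integral; hence $\mathcal U$ is exactly the locus of integral curves, and a component of $V\cap W$ fails to lie in $|L_{g-1}|$ precisely when its general member is integral. Let $X:=S^{[\delta]}\times|L_g|$, which is smooth of dimension $2\delta+g$, let $\mathcal P:=\mathcal P^1_S(L_g)$ be the rank-$3$ bundle of first-order principal parts of $L_g$, and let $\mathcal P^{[\delta]}$ be the associated tautological bundle on $S^{[\delta]}$ (pushforward of $\mathcal P$ along the universal subscheme $\Xi\subset S^{[\delta]}\times S$), locally free of rank $3\delta$ with fibre $H^0(z,\mathcal P|_z)$ at $z$. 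The first-jet map $s\mapsto j^1s$, composed with restriction to $\Xi$, induces a section $\sigma$ of $\mathrm{pr}_1^*\mathcal P^{[\delta]}\otimes\mathrm{pr}_2^*\mathcal O_{|L_g|}(1)$, whose zero scheme $D$ parametrises pairs $(z,C)$ with $C$ singular along $z$ (for reduced $z$ this reads $s\in H^0(L_g\otimes I_z^2)$). Writing $D^\circ:=D\cap(S^{[\delta]}\times\mathcal U)$, the second projection maps $D^\circ$ onto $\overline{V_\delta(S,L_g)}\cap\mathcal U$, and over a general $\delta$-nodal curve $C$ the only $z$ with $(z,C)\in D$ is the reduced node scheme $N$, so this map is birational on the main component.

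First I would prove that $D^\circ$ has pure dimension $g-\delta$, that is, that $\sigma$ is a regular section. As the zero scheme of $3\delta$ equations, every component of $D^\circ$ has codimension at most $3\delta$ in $X$, hence dimension at least $g-\delta$. For the reverse inequality, note that $\mathrm{pr}_2(D^\circ)=\overline{V_\delta(S,L_g)}\cap\mathcal U$ has dimension $g-\delta$ by Proposition \ref{dimension}, and that over a general member of each component of the image (which is $\delta$-nodal by Proposition \ref{nodal}) the fibre is a single point; the only thing to exclude are contracted components of $D^\circ$ lying over the locus of curves whose singular scheme is non-curvilinear, which is done by a dimension count. Consequently $\sigma$ is regular, so $D^\circ$ is a local complete intersection in the smooth variety $S^{[\delta]}\times\mathcal U$, hence Cohen--Macaulay; being generically reduced (it is birational to the smooth variety $V_\delta(S,L_g)$ on each of its components), it is in fact reduced.

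Now I would conclude. A connected Cohen--Macaulay scheme is connected in codimension one, so any two irreducible components of $D^\circ$ that meet do so in pure codimension one. By the uniqueness of $z$ over a general $\delta$-nodal curve, the components of $D^\circ$ correspond bijectively to those of $\overline{V_\delta(S,L_g)}\cap\mathcal U$, each dominating its image birationally. If $V$ and $W$ meet in $\mathcal U$, let $\widetilde V,\widetilde W\subset D^\circ$ be their strict transforms; at the generic point of a component $Y$ of $V\cap W$ not contained in $|L_{g-1}|$ the second projection is generically injective, whence $\codim(Y,\overline{V_\delta(S,L_g)})=\codim(\widetilde V\cap\widetilde W,D^\circ)=1$. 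For generic reducedness one needs a transversality input, since two smooth sheets of a reduced complete intersection may meet non-reducedly: by the deformation-theoretic description of the codimension-$1$ strata (as in \cite{CD2}), a general $C_0\in Y$ is either $(\delta+1)$-nodal or carries one cusp together with $\delta-1$ nodes, and the tangent spaces to the two sheets at $C_0$ are the distinct subspaces $\PP H^0(L_g\otimes I_{N_i})$ attached to the two retained-node schemes; their transversality (in the $(\delta+1)$-nodal case $\overline{V_\delta(S,L_g)}$ is locally a union of $\delta+1$ coordinate hyperplanes, whose pairwise scheme-theoretic intersection is reduced) shows that $V\cap W$ is generically reduced along $Y$.

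The main obstacle is the purity statement, namely the regularity of $\sigma$: once $D^\circ$ is known to have the expected dimension, both its Cohen--Macaulayness and the codimension-one conclusion are formal. The delicate point there is to rule out excess components of $D^\circ$ supported over curves whose singular scheme is non-curvilinear; the transversality needed for generic reducedness is a second, independent local computation.
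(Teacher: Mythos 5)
Your overall strategy is the same as the paper's: off the hyperplane $|L_{g-1}|$, realize $\overline{V_\delta(S,L_g)}$ as the image of a degeneracy locus of expected dimension $g-\delta$ in $S^{[\delta]}\times|L_g|$, conclude Cohen--Macaulayness, and apply Hartshorne's Connectedness Theorem. The genuine gap is the step you dismiss as "done by a dimension count." To prove both the purity of $D^\circ$ and the descent of the codimension-one statement from $D^\circ$ to $|L_g|$, you must bound the locus where the projection $t\colon D^\circ\to|L_g|$ has positive-dimensional fibers, and the problematic fibers are not confined to curves with non-curvilinear singular scheme. The fiber over an integral $C$ consists of the length-$\delta$ subschemes of the scheme $E_C$ cut out by the adjoint ideal, and already quite ordinary singularities produce continuous families: for instance a curve with a triple point and $\delta-2$ nodes carries a $\PP^1$ of length-$\delta$ subschemes of $E_C$. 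What is actually needed is the precise quantitative statement of the paper's Lemma \ref{lemcz}: if $E_C$ contains a $k$-dimensional family of length-$\delta$ subschemes, then $C\in\overline{V_{\delta+k+2}(S,L_g)}$, so that the non-finite locus has dimension at most $g-\delta-2$. The paper proves this by induction on $k$ via the nested Hilbert scheme $S^{[\delta,\delta+1]}$ and an analysis of the fibers of the two forgetful maps (following \cite{Le}), together with a base case classifying singularities of $\delta$-invariant at most $\delta+2$. Nothing in your proposal substitutes for this; without it neither the regularity of your section $\sigma$ nor the claim that a codimension-one component upstairs maps generically finitely to its image is established.

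Your treatment of generic reducedness is also off the paper's track and does not close. You invoke a classification asserting that a general point of a codimension-one component $Y$ of $V\cap W$ is either $(\delta+1)$-nodal or has one cusp plus $\delta-1$ nodes; this is neither proved nor true as stated in the full range $0\leq\delta\leq g-1$. The deformation-theoretic bound behind such classifications (as in \cite{CD2} and in the paper's proof of Theorem \ref{local}) gives $\deg R\leq 2$ for the ramification of the normalization, and the case $\deg R=2$ (hyperelliptic normalization, possibly with tacnodal-type singularities) can only be excluded when $\dim Y\geq 3$, i.e.\ $\delta\leq g-4$ --- which is exactly why that argument is reserved for Theorem \ref{local} and cannot underlie Proposition \ref{cohen}, valid for all $\delta\leq g-1$. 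Moreover, in your cusp case the "two retained-node schemes" picture does not apply at all: at a cuspidal curve the equigeneric locus is unibranch, so such curves cannot be generic in an intersection of two components. The paper avoids all of this with a cleaner dichotomy on the $\delta$-invariant of a general $C\in Z$: if it equals $\delta$, then $t$ is birational over $Z$ and reducedness descends from the Cohen--Macaulay locus $I$; if it exceeds $\delta$, then by dimension $Z$ is a component of $\overline{V_{\delta+1}(S,L_g)}=\overline{V^{g-\delta-1}(S,L_g)}$, and generic reducedness follows from the Diaz--Harris local description of $\overline{V^{g-\delta}}$ as a union of sheets along $\overline{V^{g-\delta-1}}$ (\cite{DH}) combined with the generic reducedness of $\overline{V^{g-\delta-1}}$ from Proposition \ref{dimension}. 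You should replace your classification step by this dichotomy, and supply the fiber-dimension lemma in full.
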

\begin{proof}
Set $U:=|L_g|\setminus |L_{g-1}|\subset |L_g|$ and consider the incidence variety 
\begin{equation}\label{incidence}
I:=\overline{\left\{ (C,z)\in U\times S^{[\delta]}\,\textrm{ s.t. }C\in |L_g\otimes I_z^2|   \right\}}\subset |L_g|\times S^{[\delta]}.
\end{equation}
We will express $I$ as the degeneracy locus of a map of vector bundles on $|L_g|\times S^{[\delta]}$. Let $p : S \times S^{[\delta]} \longrightarrow S$ and $q: S \times S^{[\delta]} \longrightarrow S^{[\delta]}$ be the projections,
and denote by $\Delta \subset S \times S^{[\delta]}$ the universal subscheme. Let $E:=q_*(p^*L_g)$ denote the vector bundle of rank $g+1$ on $S^{[\delta]}$ whose fiber over any point $z\in S^{[\delta]}$ equals $H^0(S,L_g)$. Let $F:=q_*(p^*L_g|_{2\Delta})$ be the vector bundle  of rank $3\delta$ on $S^{[\delta]}$ whose fiber over a point $z\in S^{[\delta]}$ is the vector space $H^0(S,L_g|_{2z})$, where $2z$ denotes the $0$-dimensional subscheme of $S$ defined by the ideal $I_z^2$. There is a natural map  
$$\phi:E\longrightarrow F$$
of vector bundles on $S^{[\delta]}$.
Note that $|L_g|\times S^{[\delta]}$ is isomorphic to the projective bundle $$\pi:\mathbb P(E)\longrightarrow S^{[\delta]}.$$ 
Denoting by $\mathcal U\subset \pi^*E$ the universal subbundle, we consider the degeneracy locus $D(\widetilde\phi)$ of the map
$$\widetilde\phi: \mathcal U \longrightarrow \pi^*F,$$
 of vector bundles on $\mathbb P(E)\simeq |L_g|\times S^{[\delta]}$ obtained by composing $\pi^*\phi$ with the inclusion of $\mathcal U$ in $\pi^*E$. By construction, the incidence variety $I$ is contained in $D(\widetilde\phi)$ and, if $(C,z)\in D(\widetilde\phi)\setminus I$, then $C\in |L_{g-1}|$. It can be easily checked that the expected dimension of $D(\widetilde\phi)$ equals $g-\delta$. In order to show that $D(\widetilde\phi)$ has the expected dimension along $I$, we consider the projection $t:D(\widetilde\phi)\longrightarrow |L_g|$. If $(C,z)\in D(\widetilde\phi)$, the curve $C$ is singular along $z$. Hence, if $(C,z)\in I$ and $C$ is reduced, then the $\delta$-invariant of $C$ is $\geq \delta$: this implies that $t(I)\subset\overline{V^{g-\delta}(S,L_g)}=\overline{V_\delta(S,L_g)}$, where the equality follows from Proposition \ref{nodal}. On the other hand, if $(C,z)\in D(\widetilde\phi)\setminus I$, then $C\in |L_{g-1}|\subset |L_g|$. We conclude that $I$ consists of the irreducible components of $D(\widetilde\phi)$ whose image is not entirely contained in $|L_{g-1}|$. In particular, a general curve in $t(I)$ is reduced. The following Lemma \ref{lemcz} yields that the locus in $I\setminus (t^{-1}|L_{g-1}|\cap I)$ where the fibers of $t_I:=t|_I$ are not finite has dimension $\leq g-\delta-2$, and thus
 $$g-\delta=\dim \overline{V_\delta(S,L_g)}\geq \dim I,$$ and $I$ consists of irreducible components of $ D(\widetilde\phi)$ that dominate $\overline{V_\delta(S,L_g)}$ and have the expected dimension. 
 In particular, $I$ is locally Cohen-Macaulay (cf. \cite[II, Prop. 4.1]{ACGH}) outside of its intersection with $t^{-1}|L_{g-1}|$. Hence, every irreducible component $I'$ of the intersection of two components of $I$ has codimension $1$ by Hartshorne's Connectedness Theorem (cf. \cite[Thm. 18.12]{Ei} and is generically reduced, unless possibly when $I'$ is contained in $t^{-1}|L_{g-1}|$. Let $Z$ be a component of the intersection of two irreducible components of $\overline{V_\delta(S,L_g)}$ such that $Z$ is not contained in $|L_{g-1}|$. Since any component $I'$ of $t^{-1}(Z)$ has codimension $1$ in $I$ and is generically reduced, a general fiber of the restriction of $t$ to $I'$ is finite by the following Lemma \ref{lemcz} and thus $Z$  has codimension $1$ in $\overline{V_\delta(S,L_g)}$. If a general curve in $Z$ has $\delta$-invariant precisely $\delta$, then the restriction of $t$ to $t^{-1}(Z)$ is birational and we may conclude that $Z$ is generically reduced. If instead a general curve in $Z$ has $\delta$-invariant $>\delta$, by dimensional reasons $Z$ is a component of $\overline{V_{\delta+1}(S,L_g)}=\overline{V^{g-\delta-1}(S,L_g)}$. We recall that $\overline{V_{\delta}(S,L_{g})}=\overline{V^{g-\delta}(S,L_{g})}$ is singular at the points of $\overline{V^{g-\delta-1}(S,L_{g})}$ (cf. \cite{DH}) as, in a neighborhood of a general $C\in V^{g-\delta-1}(S,L_{g})$, the locus $\overline{V^{g-\delta}(S,L_{g})}$ is the union of at most $\delta+1$ sheets corresponding to the partial normalizations of $C$ of arithmetic genus $g-\delta$. In particular, $\overline{V^{g-\delta}(S,L_{g})}$ is generically reduced along $\overline{V^{g-\delta-1}(S,L_{g})}$ as soon as $\overline{V^{g-\delta-1}(S,L_{g})}$ is generically reduced and this holds true in our case by Proposition \ref{dimension}.
 \end{proof}
\begin{lem}\label{lemcz}
Let $I\subset |L_g|\times S^{[\delta]}$ be the incidence variety defined in \eqref{incidence} and let $t_I: I\longrightarrow |L_g|$ be the first projection.
Then, the locus  in $I\setminus t_I^{-1}|L_{g-1}|$ where the fibers of $t_I$ are not finite has dimension $\leq g-\delta-2$.
\end{lem}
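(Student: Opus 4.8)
The plan is to name $B$ the locus in $I\setminus t_I^{-1}|L_{g-1}|$ where $t_I$ has non-finite fibres, to push it forward to $|L_g|$ by $t_I$, and to stratify the image by the geometric genus of the curve. The first observation is that every curve parametrized by a point of $I\setminus t_I^{-1}|L_{g-1}|$ is integral. Indeed, since $L_g\simeq\mathcal O_S(J)\otimes L_{g-1}$ and $h^0(\mathcal O_S(J))=1$, the inclusion $|L_{g-1}|\subset|L_g|$ identifies $|L_{g-1}|$ with the hyperplane of curves containing $J$; by Lemma \ref{decomposition} any non-trivial decomposition of $L_g$ into effective classes has a factor $\mathcal O_S(kJ)$, which is rigid, so a reducible or non-reduced $C\in|L_g|$ must contain $J$ and hence lie in $|L_{g-1}|$. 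Consequently, for $C\in U:=|L_g|\setminus|L_{g-1}|$ the fibre $t_I^{-1}(C)$ consists of the length-$\delta$ subschemes $z$ supported on the finite set $\mathrm{Sing}(C)$ with $C\in|L_g\otimes I_z^2|$; such a fibre is finite unless $z$ acquires a non-reduced part that moves.

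Next I would set up a dimension trade-off governed by the $\delta$-invariant. For $(C,z)\in B$ write $\delta'\geq\delta$ for the total $\delta$-invariant of $C$, so that $C$ lies in $\overline{V^{g-\delta'}(S,L_g)}$, which has pure dimension $g-\delta'$ by Proposition \ref{dimension}. Let $e$ denote the generic fibre dimension of $t_I$ on the stratum of $B$ where the general curve has $\delta$-invariant $\delta'$, so $e\geq1$ there. Since the entire fibre $t_I^{-1}(C)$ lies in $B$, restricting $t_I$ to this stratum $B_{\delta'}$ gives
$$\dim B_{\delta'}\ \leq\ \dim\overline{V^{g-\delta'}(S,L_g)}+e\ =\ (g-\delta')+e.$$
There are only finitely many relevant values of $\delta'$ (all satisfying $\delta<\delta'\leq g$), so it suffices to establish the inequality $e\leq\delta'-\delta-2$ on every stratum; this immediately yields $\dim B\leq g-\delta-2$.

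To prove $e\leq\delta'-\delta-2$ I would localize at the singular points. Decompose $z=\sum_p z_p$ with $z_p$ of length $k_p$ supported at $p\in\mathrm{Sing}(C)$, and let $e_p$ be the dimension of the family of length-$k_p$ subschemes at $p$ along which $C$ is double; then $e=\sum_p e_p$ and $\delta'-\delta=\sum_p(\delta_p-k_p)$, where $\delta_p$ is the local $\delta$-invariant. Since the support of $z$ is pinned to the finite set $\mathrm{Sing}(C)$, a moving family at $p$ cannot be contained in a single smooth branch; forming the scheme-theoretic union $w_p$ of the members of the family thus produces a \emph{non-curvilinear} subscheme of length $\geq k_p+e_p$ along which $C$ is still double.

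The inequality then reduces to two purely local facts about planar curve singularities: being double along a length-$k$ subscheme forces $\delta_p\geq k$, while being double along a non-curvilinear length-$\ell$ subscheme forces $\delta_p\geq\ell+2$ — the extremal case being the length-$3$ fat point cut out by the square of the maximal ideal at $p$, along which $C$ is double exactly when $\mathrm{mult}_pC\geq4$, so that $\delta_p\geq\binom42=6$. Granting these, one obtains $\delta_p-k_p-e_p\geq0$ at every $p$ and $\delta_p-k_p-e_p\geq2$ at any point where the family genuinely moves, whence $\sum_p(\delta_p-k_p-e_p)\geq2$, i.e. $e\leq\delta'-\delta-2$. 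I expect this local estimate — bounding the dimension of the Hilbert scheme of length-$k$ subschemes along which an arbitrary, possibly non-ordinary, singularity is double in terms of its $\delta$-invariant — to be the main obstacle, whereas the integrality reduction and the global count are comparatively formal once Proposition \ref{dimension} is available.
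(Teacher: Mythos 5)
Your reduction is aimed at exactly the right inequality: the statement $e\leq\delta'-\delta-2$ on each stratum is precisely what the paper proves, in the form $Z_{\delta,k}\subset\overline{V_{\delta+k+2}(S,L_g)}$ (a $k$-dimensional family of length-$\delta$ subschemes along which $C$ is singular forces $\delta(C)\geq\delta+k+2$), and your preliminary steps (integrality of curves outside $|L_{g-1}|$ via Lemma \ref{decomposition}, the stratified dimension count through $\overline{V^{g-\delta'}(S,L_g)}$, and the fact that a moving family pinned at $p$ cannot lie on a single smooth branch) are all sound. But your proof of the local estimate has three genuine gaps. First, the scheme-theoretic union step does not follow: from $f\in I_{z_t}^2$ for all members of the family you only get $f\in\bigcap_t I_{z_t}^2$, whereas ``$C$ double along $w_p$'' means $f\in I_{w_p}^2=\bigl(\bigcap_t I_{z_t}\bigr)^2$, and in general $\bigl(\bigcap_t I_{z_t}\bigr)^2\subsetneq\bigcap_t I_{z_t}^2$; nothing in your argument bridges this inclusion. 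Second, the length estimate $\operatorname{length}(w_p)\geq k_p+e_p$ is false: the colength-one subschemes of the length-$6$ fat point $V(m_p^3)$ are parametrized by the projectivized socle $\mathbb P\bigl((m_p^2/m_p^3)^\vee\bigr)\simeq\mathbb P^2$, a $2$-dimensional family of length-$5$ subschemes whose union has length $6$, not $7$. Third, and most importantly, the key local fact --- $\delta_p\geq\ell+2$ whenever $C$ is double along a non-curvilinear length-$\ell$ subscheme --- is asserted, not proved, and as you yourself flag it carries essentially all the difficulty; so the proposal is a plausible outline rather than a proof. (Your other local fact, $\delta_p\geq k_p$, is fine: it is the standard containment of $z$ in the conductor scheme $E_C$, which the paper also uses.)

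For contrast, the paper proves the target inequality without any such pointwise classification beyond $\delta$-invariant $\leq 2$, by inducting on the fiber dimension $k$. All subschemes in the fiber are contained in the scheme $E_C$ cut out by the adjoint (conductor) ideal $A_C$, of length $\delta(C)$. The base case $k=1$ shows that if $\delta(C)\leq\delta+2$ then $E_C$ contains only finitely many length-$\delta$ subschemes, interpreting them as partial normalizations and using that singularities of $\delta$-invariant $\leq 2$ are nodes, cusps, tacnodes, ramphoid cusps, or triple points of embedded dimension $3$. The inductive step trades a $k$-dimensional family of length-$\delta$ subschemes of $E_C$ for a $(k-1)$-dimensional family of length-$(\delta+1)$ ones, via the nested Hilbert scheme $S^{[\delta,\delta+1]}$, Lehn's fiber-dimension comparison $|i-i'|\leq 1$, and the computation $A_C/m_xA_C\simeq\mathbb C$ --- i.e., $E_C$ extends in at most one way to a length-$(\delta(C)+1)$ subscheme of $C$, because such extensions correspond to sheaves $\nu_*\mathcal O_{\tilde C}(y)$ for the finitely many $y$ over $x$. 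If you wish to salvage your local route, the statement you must actually establish is essentially this inductive trade-off; your pointwise inequality would in any case first require repairing the union step, since without $f\in I_{w_p}^2$ the non-curvilinear hypothesis never comes into play.
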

\begin{proof}
For any $k\geq 1$, let $Z_{\delta,k}\subset \overline{V_{\delta}(S,L_g)}$ be the locus of irreducible curves $C\in\overline{V_{\delta}(S,L_g)}$ such that, denoting by $\nu:\tilde C\to C$ the normalization of $C$ and by $A_C:=Hom_{\mathcal O_C}(\nu_*\mathcal O_C,\mathcal O_C)$ its adjoint ideal, the subscheme $E_C\subset C$ defined by $A_C$ contains a $k$-dimensional family of subschemes of length-$\delta$; in particular, if $\mathrm{dim}\,t_I^{-1}(t_I(C))\geq k$ then $C\in Z_{\delta,k}$. We will show that $Z_{\delta,k}\subset \overline{V_{\delta+k+2}(S,L_g)}$ for all $0\leq\delta\leq g-1$ and $k\geq 1$ and thus 
\begin{equation}\label{pe}
\dim\,t_I^{-1}(Z_{\delta,k})=\dim\,Z_{\delta,k}+k\leq \dim\,  \overline{V_{\delta+k+2}(S,L_g)}+k=g-\delta-2,
\end{equation} that yields our statement . 

We proceed by induction on $k$. The case $k=1$ amounts to showing that, if $C\in Z_{\delta,1}$, then the $\delta$-invariant $\delta(C)$ of $C$ (i.e., the length of $E_C$) is $\geq \delta+3$; this holds true because any subscheme $\xi_t$ of length $\delta$ contained in $E_C$ corresponds to a partial normalization $\nu_t:\hat{C}_t\to C$ with $p_a(\hat{C}_t)=p_a(C)-\delta$. If $\delta(C)=\delta$, then necesserily $\nu_t=\nu$, and thus $\xi_t=E_C$ is unique. Analogously, if $\delta(C)=\delta+1$, then any such $\hat{C}_t$ is obtained from $\tilde C$  by creating either one node or one cusp at the finitely many points of $\tilde C$ mapping to the singular locus of $C$; hence, the partial normalizations $\nu_t$ (or, equivalently, the subschemes $\xi_t$) are finitely many in this case. The remaining case $\delta(C)=\delta+2$ is treated in the same way using the fact that the only singularity having $\delta$-invariant equal to $2$ are tacnodes, ramphoid cusps and triple points of embedded dimension $3$.
 
We now assume that the inclusion $Z_{\delta,h}\subset \overline{V_{\delta+h+2}(S,L_g)}$ holds for any $0\leq \delta\leq g-1$ and $1\leq h\leq k-1$, and prove it for $h=k\geq 2$. Fix a general $C\in Z_{\delta,k}$, that is, $C$ has a $k$-dimensional family of length-$\delta$ subschemes contained in $E_C$. We will prove that $C$ possesses a ($k-1$)-dimensional family of subschemes of length $\delta +1$; this is enough to conclude because it implies that $C\in Z_{\delta+1,k-1}\subset  \overline{V_{\delta+k+2}(S,L_g)}$, where the inclusion follows from the induction assumption. In order to pass from subschemes of length $\delta$ to subschemes of length $\delta+1$, we consider the nested Hilbert scheme $S^{[\delta,\delta+1]}$ parametrizing pairs $(\xi,\xi')\in S^{[\delta]}\times S^{[\delta+1]}$ such that $\xi\subset \xi'$. This is endowed with two natural morphisms
 \begin{equation*}
\xymatrix{ 
&S^{[\delta,\delta+1]} \ar[ld]_{\phi}     \ar[rd]^{\psi} &\\
S^{[\delta]}\times S &     &S^{[\delta+1]}\times S,
}
\end{equation*}
mapping a pair $(\xi,\xi')$ to $(\xi,x)$ and $(\xi',x)$, respectively, with $x\in S$ being the point where $\xi$ and $\xi'$ differ. As explained in \cite[p.12]{Le}, the dimensions of the fibers of $\phi$ and $\psi$ are related as follows: given $(\xi,\xi')\in S^{[\delta,\delta+1]}$, if $\phi^{-1}(\phi(\xi,\xi'))\simeq\mathbb P^{i-1}$, then $\psi^{-1}(\psi(\xi,\xi'))\simeq \mathbb P^{i'-2}$ for some integer $i'$ satisfying $|i-i'|\leq 1$. Let us consider the $k$-dimensional family $$B:=\{\xi\in S^{[\delta]}\,\,|\,\, \xi\subset E_C\subset C\},$$ the subscheme $$W:=\{(\xi,\xi')\in S^{[\delta,\delta+1]}\,\,|\,\,\xi\subset\xi'\subset E_C\}\subset \phi^{-1}(B\times \mathrm{Supp}(E_C)),$$ and set $B':=\psi(W)\subset \psi(\phi^{-1}(B\times \mathrm{Supp}(E_C)))$. We want to show that $B'$ has dimension $\geq k-1$. Let $(\xi,\xi')\in \phi^{-1}(B\times \mathrm{Supp}(E_C))$ be general, and denote by $i-1$ the dimension of $\phi^{-1}(\phi(\xi,\xi'))$ and by $i'-2$ the dimension of $\psi^{-1}(\psi(\xi,\xi'))$. Since $i-i'\geq -1$, we obtain 
$$\dim\,  \psi(\phi^{-1}(B\times \mathrm{Supp}(E_C)))=\dim B+i-1-(i'-2)\geq k.$$
In order to conclude that $\dim\,B'\geq k-1$, it is thus enough to show that the fiber at $(\xi,x)$ of the restriction $\phi|_W$ has codimension at most $1$ in the fiber $\phi^{-1}(\xi,x)$. We need to recall that $\phi^{-1}(\xi,x)\simeq \mathbb P((I_\xi/m_xI_\xi)^\vee)$ where $m_x$ is the maximal ideal of the point $x$; indeed, any pair $(\xi,\xi')\in \phi^{-1}(\xi,x)$ corresponds to a short exact sequence
$$
0\longrightarrow I_{\xi'}\longrightarrow I_\xi\stackrel{\alpha}{\longrightarrow} \mathcal O_x\longrightarrow 0,
$$        
and thus to a linear map $\alpha_x:I_\xi/m_xI_\xi\to \mathbb C$. Since $\xi\subset E_C$, we have an inclusion $\iota:I_{E_C/S}\to I_\xi$ and a linear map $\iota_x:I_{E_C/S}/m_x I_{E_C/S}\to I_\xi/m_xI_\xi$; the subscheme $\xi'$ is contained in $E_C$ precisely when the composition $\alpha_x\circ \iota_x$ is zero. In order to show that this is a codimension $1$ condition on $\mathbb P((I_\xi/m_xI_\xi)^\vee)$ we prove that $I_{E_C/S}/m_xI_{E_C/S}$ is $1$-dimensional, or equivalently, $E_C$ is contained at most one subscheme of $S$ of length $\delta(C)+1$. Consider the standard short exact sequence of ideals
$$
0\longrightarrow \mathcal O_S(-C)\stackrel{j}{\longrightarrow} I_{E_C/S}{\longrightarrow}A_C\longrightarrow 0,
$$
where $j$ is the moltiplication by the section of $L_g$ defining $C$. Since $C$ is singular along $E_C$ and $x\in\mathrm{Supp}(E_C)$, then the image of $j$ is contained in $m_x I_{E_C/S}$ and we have an isomorphism $I_{E_C/S}/m_xI_{E_C/S}\simeq A_C/m_xA_C$. It is thus enough to verify that $A_C/m_xA_C\simeq \mathbb C$, or equivalently, $E_C$ is contained at most one subscheme  of $C$ of length $\delta(C)+1$.  This holds true because any such subscheme corresponds to a rank $1$ torsion free sheaf on $C$ of the form $\nu_*\mathcal O_{\tilde C}(y)$ for one of the finitely many points $y$ mapping to $x$.
 \end{proof}

\section{Connectedness on Halphen surfaces}\label{tre}
From now on, we will always assume that $S$ is obtained by blowing-up $9$ general points so that the equality $\overline{V_\delta(S, L_g)}=\overline{V^{g-\delta}(S, L_g)}$ holds by Proposition \ref{nodal} for every $g\geq 1$ and $0\leq\delta\leq g$.

For every $g\geq 0$ and $k\geq 1$, we consider the natural injection
$$i_{g,k}: |L_g|\hookrightarrow |L_{g+k}|$$
mapping a curve $C\in |L_g|$ to the divisor $C+kJ\in |L_{g+k}|$.
\begin{lem}\label{inclusion}
The image $i_{g,k}(|L_g|)\subset |L_{g+k}|$ coincides with the codimension $k$ linear subspace $|L_{g+k}\otimes I_x^k|$, where $x\in J$ is a general point. In particular, identifying $|L_{p}|$ with its image in $|L_q|$ under the map $i_{p, q-p}$ for every $q\geq 2$ and $0\leq p\leq q$, we have the following chain of inclusions in $|L_{q}|$:
\begin{equation}\label{chain}
\{E_9\}=|L_0|\subset |L_1|\subset\cdots  \subset |L_{q-1}|\subset  |L_{q}|
\end{equation} 
\end{lem}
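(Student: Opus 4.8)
The plan is to realize both sides as linear subspaces of $|L_{g+k}|$ and to prove equality by a double inclusion. First I would observe that, since $h^0(S,\mathcal O_S(kJ))=1$, the map $i_{g,k}$ is induced by the multiplication $H^0(S,L_g)\hookrightarrow H^0(S,L_{g+k})$ by the $k$-th power of the unique (up to scalar) section cutting out $J$; this is injective because $S$ is integral, so $i_{g,k}$ is a linear embedding of projective spaces and its image is a linear subspace of dimension $g$, i.e.\ of codimension $k$ in the $(g+k)$-dimensional $|L_{g+k}|$. The inclusion $i_{g,k}(|L_g|)\subseteq |L_{g+k}\otimes I_x^k|$ is then immediate: a divisor $kJ+C$ has multiplicity at least $\operatorname{mult}_x(kJ)=k$ at $x$, since $x$ is a smooth point of the (smooth, for general $p_i$) elliptic curve $J$.

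For the reverse inclusion I would peel off copies of $J$ one at a time. The core numerical input is $\deg(L_{g+j}|_J)=L_{g+j}\cdot J=1$ for every $j$, so $L_{g+j}|_J$ is a degree-$1$ line bundle on the elliptic curve $J$, whose complete linear system has a unique member, the base point $p_{10}(g+j)$. I claim that, for a fixed general $x\in J$ and every $j\geq 1$, any $D\in|L_{g+j}|$ with $\operatorname{mult}_x D\geq j$ must contain $J$. Indeed, if $J\not\subseteq D$ then $D$ meets $J$ properly and, using $\operatorname{mult}_x J=1$, one gets $1=D\cdot J\geq (D\cdot J)_x\geq \operatorname{mult}_x D\geq j$; this is already contradictory when $j\geq 2$, while for $j=1$ it forces $x\in\operatorname{Supp}(D|_J)=\{p_{10}(g+1)\}$, which is excluded once $x$ avoids the finitely many points $p_{10}(g+j)$. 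Writing $D=J+D'$ with $D'\in|L_{g+j-1}|$, additivity of multiplicities gives $\operatorname{mult}_x D'\geq j-1$, so the hypotheses reproduce themselves with $j$ lowered by one. Starting from $D\in|L_{g+k}\otimes I_x^k|$ and iterating $k$ times yields $D=kJ+C$ with $C\in|L_g|$, i.e.\ $D\in i_{g,k}(|L_g|)$.

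These two inclusions give the asserted equality $i_{g,k}(|L_g|)=|L_{g+k}\otimes I_x^k|$. The delicate point, and the only place genericity of $x$ (rather than merely $x\in J$) is needed, is the very last peeling step $j=1$: there the purely numerical intersection bound no longer produces a contradiction, and one must invoke the degree-$1$ computation on $J$ identifying $D|_J$ with the base point. For the displayed chain I would finally note that the embeddings are compatible, $i_{p+1,q-p-1}\circ i_{p,1}=i_{p,q-p}$, since both send $C$ to $C+(q-p)J$; hence the images form the nested sequence $\{E_9\}=|L_0|\subset|L_1|\subset\cdots\subset|L_q|$, the bottom term being the single point $|L_0|=\{E_9\}$ under the convention $L_0=E_9$.
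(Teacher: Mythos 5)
Your proof is correct, but it takes a genuinely different route from the paper's. The paper handles the reverse inclusion by a cohomological dimension count: once the obvious inclusion $i_{g,k}(|L_g|)\subset |L_{g+k}\otimes I_x^k|$ is noted, it suffices to show $h^0(L_{g+k}\otimes I_x^k)=g+1$, which is done by induction on $k$ using the short exact sequence $0\to L_{g+k-1}\otimes I_x^{k-1}\to L_{g+k}\otimes I_x^k\to L_{g+k}\otimes I_x^k|_J\to 0$ together with the isomorphism $L_{g+k}\otimes I_x^k|_J\simeq \mathcal O_J(p_{10}(g+k)-kx)$, whose $h^0$ vanishes for degree reasons when $k\geq 2$ and because $x$ is general (so $x\neq p_{10}(g+1)$) at the base case. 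You instead prove the reverse inclusion set-theoretically, peeling off copies of $J$ via the intersection bound $1=D\cdot J\geq \operatorname{mult}_x D$ when $J\not\subset D$, with the identification of $D|_J$ as the unique point $p_{10}(g+1)$ at the last step. The two arguments are close cousins: your peeling step at level $j$ is exactly the geometric content of the vanishing $h^0(\mathcal O_J(p_{10}(g+j)-jx))=0$, and both proofs invoke genericity of $x$ only at the bottom level $j=1$, a point you rightly isolate. What each buys: yours is more elementary (only intersection multiplicities of curves on $S$ and the degree-$1$ computation on the elliptic curve $J$, no cohomology), but you must supply the linearity and codimension of the image separately, which you do correctly by observing that $i_{g,k}$ is multiplication by the $k$-th power of the section cutting out $J$ (using $h^0(\mathcal O_S(kJ))=1$); the paper's exact-sequence induction is shorter and yields the equality of linear subspaces directly from the dimension count $h^0(L_{g+k}\otimes I_x^k)=g+1$. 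Your verification of the compatibility $i_{p+1,q-p-1}\circ i_{p,1}=i_{p,q-p}$, giving the chain \eqref{chain}, matches what the paper leaves implicit.
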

\begin{proof}
The inclusion $i_{g,k}(|L_g|)\subset |L_{g+k}\otimes I_x^k|$ is obvious. In order to prove equality, it is enough to show that  $h^0(L_{g+k}\otimes I_x^k)=g+1$. We proceed by induction on $k$. The case $k=1$ is trivial and the induction step follows from the short exact sequences
$$
0\longrightarrow L_{g+k-1}\otimes I_x^{k-1}\longrightarrow L_{g+k}\otimes I_x^k\longrightarrow L_{g+k}\otimes I_x^k|_J\longrightarrow 0,
$$
along with the isomorphism $L_{g+k}\otimes I_x^k|_J\simeq \mcO_J(p_{10}(g+k)-kx)$.
\end{proof}

\subsection{Connectedness of the closure of Severi varieties}
\begin{prop}\label{cz}
For every $g\geq 2$ and $0\leq \delta\leq g-1$, the following equality holds in $|L_{g}|$: 
\begin{equation}\label{fondamentale}
|L_{g-1}|\cap \overline{V_\delta(S,L_{g}) }=\bigcup_{h=0}^\delta  \overline{V_{\delta-h}(S,L_{g-h-1}) }.
\end{equation}
\end{prop}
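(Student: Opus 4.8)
The plan is to rewrite both sides via Proposition \ref{nodal} as $|L_{g-1}|\cap \overline{V^{g-\delta}(S,L_g)}$ and $\bigcup_{h=0}^{\delta}\overline{V^{g-\delta-1}(S,L_{g-h-1})}$, where in the latter each $\overline{V^{g-\delta-1}(S,L_{g-h-1})}$ is embedded in $|L_g|$ through $i_{g-h-1,h+1}$, that is, by adding $(h+1)J$. Under the chain \eqref{chain} a divisor $D\in |L_{g-1}|$ is exactly a divisor of $|L_g|$ containing $J$; by Lemma \ref{decomposition} its non-$J$ part is forced to be integral, so we may write $D=C''+mJ$ with $m\ge 1$ and $C''\in |L_{g-m}|$ integral and not containing $J$. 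Both inclusions are then read off from this normal form.

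For the inclusion $\supseteq$ I would establish two elementary one-step inclusions of closed subsets, obtained by adding a single copy of $J$:
\begin{equation*}
i_{p,1}\big(\overline{V^{h}(S,L_p)}\big)\subseteq \overline{V^{h}(S,L_{p+1})}\quad\text{and}\quad i_{p,1}\big(\overline{V^{h}(S,L_p)}\big)\subseteq \overline{V^{h+1}(S,L_{p+1})},\qquad 0\le h\le p.
\end{equation*}
By continuity of $i_{p,1}$ and closedness it suffices to check them on a general, hence nodal (Proposition \ref{dimension}), integral curve $A\in V^{h}(S,L_p)$, with its $p-h$ nodes $N$ all lying off $J$ since $A\cdot J=1$. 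For the first inclusion one imposes double points at $N\cup\{A\cap J\}$, for the second only at $N$; in either case $A+J$ belongs to the corresponding linear subsystem of $|L_{p+1}|$, whose general member is integral by Lemma \ref{decomposition} and, carrying the prescribed nodes, has geometric genus at most $h$, resp. $h+1$. Chaining the first inclusion $h$ times and the second once yields
$$\overline{V^{g-\delta-1}(S,L_{g-h-1})}\ \xrightarrow{\,+hJ\,}\ \overline{V^{g-\delta-1}(S,L_{g-1})}\ \xrightarrow{\,+J\,}\ \overline{V^{g-\delta}(S,L_{g})},$$
so the $h$-th piece on the right of \eqref{fondamentale} is contained in $|L_{g-1}|\cap \overline{V_\delta(S,L_g)}$.

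For the inclusion $\subseteq$, write $D=C''+mJ$ as above. If $m\ge \delta+1$ then $D$ contains $(\delta+1)J$, and since $\overline{V_0(S,L_{g-\delta-1})}$ is the whole system $|L_{g-\delta-1}|$ (its general member being smooth), $D$ already lies in the top piece $\overline{V_0(S,L_{g-\delta-1})}+(\delta+1)J$ of the right-hand side, with no extra work. If instead $1\le m\le \delta$, I would bound the geometric genus of $C''$. Using the simultaneous resolution of the universal family recalled before Proposition \ref{dimension}, the divisor $D$ is the image under $\mu$ of a stable map $f_0:\Sigma_0\to S$ with $p_a(\Sigma_0)=g-\delta$, arising as a limit of normalizations of integral genus-$(g-\delta)$ curves degenerating to $D$. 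Since $f_{0*}[\Sigma_0]=[C'']+m[J]$ with $m\ge 1$, the connected domain $\Sigma_0$ contains both the normalization of $C''$, of genus $p_g(C'')$, and a nonempty subcurve $Z$ dominating the genus-one curve $J$, hence with $p_a(Z)\ge 1$; connectedness then gives $g-\delta=p_a(\Sigma_0)\ge p_g(C'')+p_a(Z)\ge p_g(C'')+1$. Thus $p_g(C'')\le g-\delta-1$, so by Proposition \ref{nodal} the integral curve $C''$ lies in $\overline{V^{g-\delta-1}(S,L_{g-m})}$, and adding $mJ$ places $D$ in the $(m-1)$-th piece of the right-hand side.

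The main obstacle is precisely this geometric-genus estimate: one must guarantee that the multiple component $mJ$ forces the limiting stable curve to spend at least one unit of genus on the part lying over $J$, so that $C''$ cannot acquire geometric genus $g-\delta$. This is exactly what the limit-of-normalizations argument delivers, its input being the existence of the simultaneous resolution together with the semi-normalization and image morphism $\mu$ for $\overline{V^{g-\delta}(S,L_g)}$; by contrast, the genericity and integrality statements used in the $\supseteq$ direction are routine once Lemma \ref{decomposition} is available. A pleasant feature of this organization is that no separate bound $m\le \delta+1$ needs to be proved, since every divisor containing $(\delta+1)J$ is automatically absorbed by the top piece of the decomposition.
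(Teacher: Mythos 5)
Your inclusion $\subseteq$ is essentially sound and close to the paper's own (terser) argument: the normal form $D=C''+mJ$ with $C''$ integral via Lemma \ref{decomposition}, the absorption of every $D$ with $m\geq\delta+1$ into the top piece (since $\overline{V_0(S,L_{g-\delta-1})}=|L_{g-\delta-1}|$), and the genus bound $p_g(C'')\leq g-\delta-1$ extracted from a limit stable map of arithmetic genus $g-\delta$ whose subcurve dominating the elliptic curve $J$ must carry genus at least one --- this last step is exactly what the paper compresses into ``this trivially follows because $X=C+(h+1)J\in\overline{V_\delta(S,L_g)}=\overline{V^{g-\delta}(S,L_g)}$ and $J$ is an elliptic curve'', combined with its dimension count.

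The direction $\supseteq$, however, rests on a false lemma. Your first one-step inclusion $i_{p,1}\bigl(\overline{V^{h}(S,L_p)}\bigr)\subseteq \overline{V^{h}(S,L_{p+1})}$ cannot hold: both sides have pure dimension $h$ by Proposition \ref{dimension}(ii), so the inclusion would force the image to be a union of irreducible components of $\overline{V^{h}(S,L_{p+1})}$, whereas a general point of any such component parametrizes an integral curve and every point of the image contains $J$. (It also contradicts Proposition \ref{cz} itself: applied with $g=p+1$, $\delta=p+1-h$, the $j=0$ piece of $|L_p|\cap\overline{V^{h}(S,L_{p+1})}$ is $\overline{V^{h-1}(S,L_p)}$, of dimension $h-1$, so for general $A\in V^{h}(S,L_p)$ one has $A+J\notin\overline{V^{h}(S,L_{p+1})}$.) Concretely, your proof of that step fails at the integrality claim: every curve of $|L_{p+1}|$ passing through (a fortiori singular at) the point $A\cap J=p_{10}(p)$ must contain $J$, because $L_{p+1}|_J\simeq\mathcal{O}_J(p_{10}(p+1))$ has a one-dimensional space of sections vanishing only at $p_{10}(p+1)\neq p_{10}(p)$; hence your subsystem equals $J+|L_p\otimes I_N^2|$ and has no integral member at all, and Lemma \ref{decomposition} cannot repair this. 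The structural point you are missing is that adding $J$ once costs exactly one unit of geometric genus, but adding $(h+1)J$ all at once \emph{also} costs only one unit: in the paper's proof the $(h+1)$ copies of $J$ are accounted for by a single connected smooth elliptic curve $\tilde J$ mapping to $J$ with degree $h+1$, glued to the normalization of $C$ at one point, with smoothability established by the normal-sheaf computation ($N_f(-p)|_{\tilde J}\simeq f_J^*\mathcal{O}_J(J)$, nontrivial of degree $0$, whence $h^1(N_f)=0$) and with the nodes free to move. No iteration of one-copy-at-a-time degenerations can reproduce this: chaining only your second inclusion lands in the too-large locus $\overline{V^{g-\delta+h}(S,L_g)}$, and even that second inclusion is not proved as written, since the subsystem $|L_{p+1}\otimes I_N^2|$ with nodes pinned at $N$ may likewise contain no integral member --- the paper deliberately deforms stable maps rather than divisors with assigned singular points.
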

\begin{proof}
First of all, we verify the inclusion $\supset$ in \eqref{fondamentale} by showing that, if $0\leq h\leq \delta$ and $C$ is general in any irreducible component of  $\overline{V_{\delta-h}(S,L_{g-h-1})}$, the curve $X=C+ (h+1)J\in |L_{g}|$ can be deformed to an irreducible curve in $|L_{g}|$ of geometric genus $g-\delta$ (which thus lies in $\overline{V_\delta(S,L_{g})}$ by Proposition \ref{nodal}). The curve $X$ is the image of a stable map $f:\tilde C\cup_p \tilde J\to S$ of genus $g-\delta$, where $\tilde J$ is a smooth elliptic degree $h+1$ cover of $J$, the curve $\tilde C$ is the normalization of $C$ and has genus $g-\delta-1$, and the gluing point $p$ is mapped to $C\cap J=\{ p_{10}(g-h-1) \}$. We denote by $f_C:=f|_{\tilde C}:\tilde C\to C\subset S$ and by $f_{J}:=f|_{\tilde J}:\tilde J\to J\subset S$ the restrictions of $f$ to $\tilde C$ and $\tilde J$, respectively.  As $f_{J}$ is étale and $C$ is nodal, both $f_{J}$ and $f_C$ are unramified and the same holds true for the map $f$ since $ C$ and $J$ intersect transversally at $p_{10}(g-h-1)$. The normal sheaf $N_f$ sits in the following short exact sequence: 
$$
0\longrightarrow N_f(-p)|_{\tilde{J}}\longrightarrow N_f\longrightarrow N_f|_{\tilde C}\longrightarrow 0.
$$
By \cite[Lem. 2.5]{GHS} we have isomorphisms 
$$N_f|_{\tilde C}\simeq N_{f_C}(p)\simeq \omega_{\tilde C}(2p),$$
and analogously
$$N_f(-p)|_{\tilde{J}}\simeq N_{f_{J}}\simeq f_{J}^*\mathcal O_{J}(J).$$
Since the line bundle $f_{J}^*\mathcal O_{J}(J)$ is non-trivial of degree $0$, we obtain that $h^0(N_f)=h^0(\omega_{\tilde C}(2p))=g-\delta$ and $h^1(N_f)=0$, and thus $f$ defines a smooth point of a ($g-\delta$)-dimensional component of $M_{g-\delta}(S, L_{g})$. However, $f_C$ is an unramified stable map of genus $g-1-\delta$ and thus $\dim_{[f_C]}M_{g-1-\delta}(S, L_{g-h-1})=g-1-\delta$. Analogously, the map $f_{J}$ is rigid in $M_{1}(S, (h+1)J)$. Hence, a general deformation of $f$ parametrizes a stable map from an integral (and smooth by dimensional arguments) curve of genus $g-\delta$. This proves that $f$ is smoothable and thus the existence of the morphism \eqref{semi} yields that $X\in  \overline{V_\delta(S,L_{g}) }$.

It remains to verify the inclusion $\subset$ in \eqref{fondamentale}. Any irreducible component $V$ of  $|L_{g-1}|\cap\overline{V_\delta(S,L_{g})}$ satisfies $\dim V=g-1-\delta$ because no component of $\overline{V_\delta(S,L_{g})}$ is contained in $|L_{g-1}|$ by Proposition \ref{dimension} and $|L_{g-1}|$ is a hyperplane in $|L_{g}|$. Assume now that a general element $X$ of $V$ parametrizes a curve in $|L_{g-h-1}|\setminus |L_{g-h-2}|$ for some $h\geq 0$, that is, $X=C+(h+1)J$ with $C$ irreducible. We need to show that $h\leq \delta$ and $C\in V_{\delta-h}(S,L_{g-h-1})$; by dimensional reasons and Proposition \ref{nodal}, it is enough to check that $C$ has geometric genus $\leq g-1-\delta$. This trivially follows because $X=C+(h+1)J\in \overline{V_\delta(S,L_{g})}=\overline{V^{g-\delta}(S,L_{g})}$ and $J$ is an elliptic curve.

\end{proof}
We fix $k>>0$ so that $\overline{V_\delta(S,L_{g+k}) }$ is irreducible by Proposition \ref{irreducible}. Inside $|L_{g+k}|$ we consider the chain of inclusions provided by Lemma \ref{inclusion}
$$
\{E_9\}=|L_0|\subset |L_1|\subset\cdots  \subset |L_{g+k-1}|\subset  |L_{g+k}|
$$
and choose hyperplanes $H_1,\ldots H_{g+k}\subset  |L_{g+k}|$ such that for each $1\leq i\leq g+k$ the following equality holds:
$$|L_{g+k-i}|=\left(\bigcap_{j=1}^i H_j\right)\cap |L_{g+k}|.$$
By Proposition \ref{cz}, $\overline{V_\delta(S,L_{g})}$ does not coincide with the intersection of  $\overline{V_\delta(S,L_{g+k}) }$ with a linear subspace. In order to circumvent this problem we perform a sequence of blow-ups. We start by blowing up $|L_{g+k}|$ along $|L_{g-\delta}|$ and denote by $E_{g-\delta}$ the exceptional divisor. We then blow up the strict transform of $|L_{g-\delta+1}|$ and denote by $E_{g-\delta+1}$ the exceptional divisor, and so on until we finally blow up the strict transform of $|L_{g+k-2}|$ and get the last exceptional divisors $E_{g+k-2}$. Let 
$$\pi_{k,\delta}: \widetilde {|L_{g+k}|} \longrightarrow |L_{g+k}|$$
be the composition of these $k+\delta-1$ blow-ups and still write $E_j$ for the strict transforms of the exceptional divisors in $\widetilde {|L_{g+k}|}$.

 We stress that for all $g-\delta\leq j\leq g+k-2$ the fiber of the restriction $\pi_{k,\delta}|_{E_j}:E_j\to |L_j|$ over any $X\in |L_j|$ is  the projective space $\mathbb P(N_{|L_j|/|L_{g+k}|,X})=\mathbb P^{g+k-j-1}$ blown-up at the point $\mathbb P(N_{|L_j|/|L_{j+1}|,X})=\mathbb P^{0}$ (so that the exceptional divisor is $\mathbb P(N_{|L_{j+1}|/|L_{g+k}|,X})=\mathbb P^{g+k-j-2}$) and then at the strict transform of the line $\mathbb P(N_{|L_{j}|/|L_{j+2}|,X})=\mathbb P^{1}$ (with exceptional divisor given by a $\mathbb P(N_{|L_{j+2}|/|L_{g+k}|,X})=\mathbb P^{g+k-j-3}$-bundle) and so on, until finally at the strict transform of $\mathbb P(N_{|L_{j}|/|L_{g+k-2}|,X})$ (the exceptional divisor over it being a $\mathbb P(N_{|L_{g+k-2}|/|L_{g+k}|,X})=\mathbb P^1$-bundle). 

For every $1\leq i\leq k$, we consider the line bundle
$$
\mathfrak H_i:=\pi_{k,\delta}^*\mcO_{|L_{g+k}|}(1)-\sum_{j=i}^{\delta+k-1}E_{g+k-1-j},
$$
whose general section is the strict transform of a general hyperplane in $|L_{g+k}|$ containing $|L_{g+k-1-i}|$. Denoting by $\widetilde H_i$ the strict transform of $H_i$ in $\widetilde {|L_{g+k}|}$, we get that
\begin{eqnarray*}
&\widetilde{H_1}\in |\mathfrak H_1|\simeq \mathbb P^1,\\
&\widetilde{H_i}+E_{g+k-i}\in |\mathfrak H_i|\simeq \mathbb P^i,\textrm{     for }  2\leq i\leq k.
\end{eqnarray*}
\begin{prop}\label{key1}
Fix $g\geq 2$ and $0\leq \delta\leq g-1$, and for every $k\geq 1$ let $\pi_{k,\delta}: \widetilde {|L_{g+k}|} \longrightarrow |L_{g+k}|$ be the composition of $k+\delta-1$ blow-ups described above. Denoting by $\widetilde{V_\delta(S,L_{g+k}) }$ the strict transform of $\overline{V_\delta(S,L_{g+k}) }$ in $\widetilde {|L_{g+k}|}$, the following isomorphism holds in $\widetilde {|L_{g+k}|}$: 
\begin{equation}\label{fondamentale1}
\begin{split}
\widetilde{V_\delta(S,L_{g}) }&\simeq\widetilde{V_\delta(S,L_{g+k}) }\cap \widetilde{H_1}\cap \left(\bigcap_{i=2}^k \widetilde{H_i}+E_{g+k-i} \right)=\\
&=\widetilde{V_\delta(S,L_{g+k}) }\cap \widetilde{H_1}\cap \left(\bigcap_{i=2}^k E_{g+k-i} \right).
\end{split}
\end{equation}
\end{prop}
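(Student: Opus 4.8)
The plan is to establish both equalities in \eqref{fondamentale1} by descending through the flag \eqref{chain} one step at a time, with Proposition \ref{cz} as the engine and the blow-ups $\pi_{k,\delta}$ serving to separate the wanted component from the excess it produces. The basic numerical input is that, since $\overline{V_\delta(S,L_{g+k})}$ is irreducible of dimension $g+k-\delta$, Proposition \ref{cz} exhibits $\overline{V_\delta(S,L_{g+k})}\cap|L_{g+k-1}|$ as a pure-dimensional union of the $\delta+1$ loci $\overline{V_{\delta-h}(S,L_{g+k-1-h})}$, $0\le h\le\delta$, all of the same dimension $g+k-\delta-1$. Among these only the term $h=0$ fails to lie in the flag of centers $|L_{g-\delta}|\subset\cdots\subset|L_{g+k-2}|$, while every excess term $h\ge 1$ is contained in one of the blown-up centers. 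This is precisely the discrepancy the blow-ups are designed to resolve, and it forces us to separate the components rather than argue by dimension.

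For the first equality I would iterate this picture. Intersecting $\widetilde{V_\delta(S,L_{g+k})}$ with $\widetilde H_1$, the strict transform of the hyperplane $|L_{g+k-1}|$ --- which is \emph{not} a center --- realizes the first instance of Proposition \ref{cz}: away from the exceptional divisors the main component descends to the strict transform of $\overline{V_\delta(S,L_{g+k-1})}$, while each excess term, being supported on a center, is carried into the exceptional locus. I would then cut successively by $E_{g+k-2},E_{g+k-3},\dots,E_g$, in this decreasing order. Each cut by $E_{g+k-i}$ implements the descent from $L_{g+k-i+1}$ to $L_{g+k-i}$ of Proposition \ref{cz}: because $E_{g+k-i}$ meets the current strict transform only over the center $|L_{g+k-i}|$, the intersection isolates the main descent $\overline{V_\delta(S,L_{g+k-i})}$ inside $E_{g+k-i}$. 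Here the bottom-up order of the blow-ups and the explicit fibrewise description of $\pi_{k,\delta}|_{E_j}$ recalled before the statement are what make ``separation'' precise. After the $k-1$ exceptional cuts the dimension has dropped from $g+k-\delta-1$ to $g-\delta=\dim\overline{V_\delta(S,L_g)}$, and the surviving stratum is the strict transform $\widetilde{V_\delta(S,L_g)}$.

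For the second equality I would show that at each stage $i\ge 2$ the locus already carved out, namely $\widetilde{V_\delta(S,L_{g+k})}\cap\widetilde H_1\cap E_{g+k-2}\cap\cdots\cap E_{g+k-i+1}$, avoids the strict transform $\widetilde H_i$. Indeed this locus lies over the center $|L_{g+k-i+1}|$, whereas $H_i$ is a general hyperplane containing only the smaller space $|L_{g+k-i}|\subsetneq|L_{g+k-i+1}|$; since $[\widetilde H_i]=\mathfrak H_i-E_{g+k-i}$, cutting with the reducible representative $\widetilde H_i+E_{g+k-i}\in|\mathfrak H_i|$ then coincides with cutting with $E_{g+k-i}$, which is exactly the second equality. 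To upgrade these set-theoretic descriptions to the stated isomorphism I would check that every intersection taken is transverse and reduced, using the smoothness of the Severi varieties (Proposition \ref{dimension}), the equidimensionality forced by Proposition \ref{cz}, and the transversality of the blow-up strata, so that each descent restricts to an isomorphism onto the main component of the relevant hyperplane section.

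The step I expect to be the main obstacle is the separation of the excess together with the transversality needed for an \emph{isomorphism} rather than a mere birational or set-theoretic identity. Because the excess components of Proposition \ref{cz} have the \emph{same} dimension as the main component, they cannot be discarded on dimension grounds; one must instead control, near a general point of $\overline{V_\delta(S,L_g)}$, how the strict transform of the irreducible $\overline{V_\delta(S,L_{g+k})}$ meets each exceptional divisor, and verify that at every stage the surviving excess is pushed into an exceptional divisor distinct from the next one in the prescribed sequence. This is exactly the point at which the carefully chosen bottom-up order of the blow-ups and the fibrewise structure of $\pi_{k,\delta}|_{E_j}$ are indispensable.
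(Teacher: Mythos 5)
Your plan follows the same route as the paper's proof: induct down the flag, use Proposition \ref{cz} at each stage, identify the successive cuts by $E_{g+k-i}$ with strict transforms of the flag elements (via the standard blow-up isomorphism $\widetilde{H_1}\simeq \widetilde{|L_{g+k-1}|}$ and its iterates), and obtain the second equality from the fact that $\widetilde{H_i}$ misses the carved-out locus because $H_i$ meets the relevant flag element exactly along a blown-up center, so the strict transforms separate (in the paper this is the observation $\widetilde{H_1}\cap\widetilde{H_2}=\emptyset$; note the small slip in your version: for $i=2$ the carved-out locus lies over $|L_{g+k-1}|$, which is not a center --- what matters is that $H_2\cap |L_{g+k-1}|=|L_{g+k-2}|$ \emph{is} one).

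However, there is a genuine gap at the decisive step, which you explicitly defer as ``the main obstacle'' rather than resolve. Your statement that each excess term, ``being supported on a center, is carried into the exceptional locus'' removes nothing: the strict transform $\widetilde{V_\delta(S,L_{g+k})}$ still contains a divisor lying over each excess component $V_{\delta-h}(S,L_{g+k-1-h})$, and $\widetilde{H_1}\cap E_{g+k-1-h}$ is itself nonempty (a blown-up $\mathbb P^{h-1}$-bundle over the center), so the disjointness of the two over the excess loci is not a formal consequence of the blow-up structure or of the bottom-up ordering of the centers. The paper proves it by a concrete deformation-theoretic computation: by Proposition \ref{cz} the intersection $\overline{V_\delta(S,L_{g+k})}\cap H_1$ has pure codimension $1$ in $\overline{V_\delta(S,L_{g+k})}$, so over a general $X\in V_{\delta-h}(S,L_{g+k-1-h})$ one has the identification $N_{V_{\delta-h}(S,L_{g+k-1-h})/\overline{V_\delta(S,L_{g+k})},X}=N_{\overline{V_\delta(S,L_{g+k})}\cap H_1/\overline{V_\delta(S,L_{g+k})},X}$, a single normal direction; hence $\widetilde{V_\delta(S,L_{g+k})}$ meets the exceptional fiber over $X$ in the \emph{single} point $\xi_X=\mathbb P\bigl(N_{\overline{V_\delta(S,L_{g+k})}\cap H_1/\overline{V_\delta(S,L_{g+k})},X}\bigr)$, and $\xi_X\notin\widetilde{H_1}$ because a general deformation of $X$ inside $\overline{V_\delta(S,L_{g+k})}$ points out of $H_1$ (no component of the Severi variety lies in $H_1$, by Proposition \ref{dimension}). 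This smoothing direction transverse to $H_1$ is the actual mechanism killing the excess --- the ordering of the centers only serves the formal identifications in the induction step --- and without it your argument restates the difficulty instead of overcoming it.
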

\begin{proof}
We will prove, by induction on $1\leq l\leq k$, that 
\begin{equation}\label{indu}
\begin{split}
\widetilde{V_\delta(S,L_{g+k-l}) }&\simeq\widetilde{V_\delta(S,L_{g+k}) }\cap \widetilde{H_1}\cap \left(\bigcap_{i=2}^l \widetilde{H_i}+E_{g+k-i} \right)=\\
&=\widetilde{V_\delta(S,L_{g+k}) }\cap \widetilde{H_1}\cap \left(\bigcap_{i=2}^l E_{g+k-i} \right).
\end{split}
\end{equation}
The case $l=1$ amounts to show that:
\begin{equation}\label{piove}\widetilde{V_\delta(S,L_{g+k}) }\cap \widetilde{H_1}\simeq \widetilde{V_\delta(S,L_{g+k-1}) }.
\end{equation}
 Proposition \ref{cz} yields the following equality in $|L_{g+k}|$:
$$
 \overline{V_\delta(S,L_{g+k}) }\cap H_1=\bigcup_{h=0}^\delta  \overline{V_{\delta-h}(S,L_{g+k-1-h}) }.
 $$
In particular, for every $1\leq h\leq \delta$ the intersection $ \overline{V_\delta(S,L_{g+k}) }\cap |L_{g+k-1-h}|$ has codimension $1$ in $ \overline{V_\delta(S,L_{g+k}) }$. For a general $X\in V_{\delta-h}(S,L_{g+k-1-h})$ one has the identification $$N_{V_{\delta-h}(S,L_{g+k-1-h}) / \overline{V_\delta(S,L_{g+k}) },X}=N_{\overline{V_\delta(S,L_{g+k}) }\cap H_1/ \overline{V_\delta(S,L_{g+k}) },X},$$ 
and thus $\widetilde{V_\delta(S,L_{g+k}) }$ intersects the fiber over $X$ of the exceptional divisor $E_{g+k-1-h}\to |L_{g+k-1-h}|$ at the point $$\xi_X=\mathbb P(N_{\overline{V_\delta(S,L_{g+k}) }\cap H_1/ \overline{V_\delta(S,L_{g+k}) },X})\in \mathbb P(N_{|L_{g+k-2}|/|L_{g+k|}}).$$

 On the other hand, the fiber of $\widetilde{H_1}\cap E_{g+k-1-h}\to |L_{g+k-1-h}|$ over $X$ equals $\mathbb P(N_{|L_{g+k-1-h}|/H_1,X})=\mathbb P^{h-1}$ blown-up at the point $\mathbb P(N_{|L_{g+k-1-h}|/|L_{g+k-h}|,X})$ and then, one after the other, at all the strict transforms of the subspaces $\mathbb P(N_{|L_{g+k-1-h}|/|L_{g+k-h+j}|,X})$ for $0\leq j\leq h-2$ (so that the corresponding  exceptional divisors is a $\mathbb P(N_{|L_{g+k-h+j}|/H_1,X})=\mathbb P^{h-j-2}$-bundle). 
For a general choice of $X$, there is a deformation of $X$ inside $\overline{V_\delta(S,L_{g+k})}$ pointing out of $\overline{V_\delta(S,L_{g+k})}\cap H_1$. Hence, $\xi_X$ is not contained in $\widetilde{H_1}\cap E_{g+k-1-h}$ and \eqref{piove} follows.
 
Standard properties of blow-ups (cf. \cite[Prop. IV-21]{EH}) yield $\widetilde H_1\simeq \widetilde{|L_{g+k-1}|}$ and, under this isomorphism, the exceptional divisors $E_{g+k-3},\ldots, E_{g-\delta}$  restricts to the $\delta+k-2$ exceptional divisors of $\pi_{k-1,\delta}:\widetilde{|L_{g+k-1}|}\to |L_{g+k-1}|$. Furthermore, $(\widetilde{H}_2+E_{g+k-2})\cap \widetilde{H_1}=E_{g+k-2}\cap \widetilde{H_1}$ (where the equality follows from the fact that $\widetilde{H_1}\cap\widetilde{H_2}=\emptyset$ by construction) is isomorphic to the strict transform of $|L_{g+k-2}|$ under $\pi_{k-1,\delta}$ and is thus isomorphic to $\widetilde{|L_{g+k-2}|}$. In this way, by induction on  $2\leq l\leq k$, one  obtains the isomorphism
$$
\widetilde{|L_{g+k-l}|}\simeq \widetilde{H_1}\cap \left(\bigcap_{i=2}^{l} \widetilde{H_i}+E_{g+k-i} \right)= \widetilde{H_1}\cap \left(\bigcap_{i=2}^{l} E_{g+k-i} \right)
$$
and shows that, if $l\leq k-1$, the divisor $\widetilde{H_{l+1}}+E_{g+k-l-1}$ (or equivalently, $E_{g+k-l-1}$) restricts to the strict transform of $|L_{g+k-l-1}|$ under $\pi_{k-l,\delta}: \widetilde{|L_{g+k-l}|}\to |L_{g+k-l}|$.  Assuming now that \eqref{indu} holds for $l$, we obtain it for $l+1$ because
\begin{equation*}
 \begin{split}
\widetilde{V_\delta(S,L_{g+k}) }\cap& \widetilde{H_1}\cap \left(\bigcap_{i=2}^{l+1} \widetilde{H_i}+E_{g+k-i} \right)\\
&=\left(\widetilde{V_\delta(S,L_{g+k}) }\cap \widetilde{H_1}\cap \left(\bigcap_{i=2}^{l} \widetilde{H_i}+E_{g+k-i} \right)\right)\cap \left( \widetilde{H_{l+1}}+E_{g+k-l-1} \right)\\
&=\left(\widetilde{V_\delta(S,L_{g+k}) }\cap \widetilde{H_1}\cap \left(\bigcap_{i=2}^{l} E_{g+k-i} \right)\right)\cap  E_{g+k-l-1} \\
&\simeq \widetilde{V_\delta(S,L_{g+k-l}) }\cap \widetilde{|L_{g+k-l-1}|}\simeq  \widetilde{V_\delta(S,L_{g+k-l-1}) }.
\end{split}
\end{equation*}
\end{proof}
We now consider the morphism 
$$\Psi=(\Psi_1,\ldots,\Psi_k):  \widetilde {|L_{g+k}|} \longrightarrow \mathbb P^1\times\cdots\mathbb P^k,$$
whose $i$th component $\Psi_i$ is defined by the linear system $|\mathfrak H_i|$; in particular, $\Psi_i$ is a (non-minimal) resolution of the projection $\pi_i:|L_{g+k}|\dashrightarrow\mathbb P^i$ of $|L_{g+k}|$ from $|L_{g+k-1-i}|$. 

For $2\leq j\leq k$ consider the projective subspace $W_j=\overline {\pi_{k}(|L_{g+k-j}}|\subset\mathbb P^{k}$ of codimension $j$, and blow-up $\mathbb P^k$ first at the point $W_{k}$, then at the strict transform of the line $W_{k-1}$ and so on on, until finally at the strict transform of $W_{2}$. We denote by $\widetilde{\mathbb P^{k}}\to \mathbb P^k$ the composition of these $k-1$ blow-ups and recall that $\mathbb P^k$ is naturally a subvariety of the product $\mathbb P^1\times \cdots\times \mathbb P^k$. We denote by 
\begin{equation}\label{psi}\psi=(\psi_1,\ldots,\psi_k):\widetilde{V_\delta(S,L_{g+k}) }\longrightarrow \mathbb P^1\times\cdots\mathbb P^k\end{equation} the restriction of $\Psi$ to $\widetilde{V_\delta(S,L_{g+k}) }$ and prove the following result.
\begin{lem}\label{blow}
The images of both $\Psi$ and $\psi$ coincide with $\widetilde{\mathbb P^{k}}$. Furthermore, $\widetilde{V_\delta(S,L_{g}) }$ is isomorphic to a fiber of $\psi$. 
\end{lem}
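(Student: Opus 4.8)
The plan is to first realize $\widetilde{\mathbb{P}^k}$ as the common resolution of the rational maps $\pi_1,\dots,\pi_k$, and then to single out one distinguished point of it whose fiber is $\widetilde{V_\delta(S,L_g)}$. I begin with the image of $\Psi$. Since the centers $|L_{g+k-1-i}|$ of the $\pi_i$ are nested and all contain the center $|L_{g-1}|$ of $\pi_k$, each $\pi_i$ factors as $\pi_k$ followed by the linear projection $q_i\colon\mathbb{P}^k\dashrightarrow\mathbb{P}^i$ away from $W_{i+1}=\overline{\pi_k(|L_{g+k-1-i}|)}$; in particular $(\pi_1,\dots,\pi_k)=(q_1,\dots,q_{k-1},\mathrm{id})\circ\pi_k$. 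The map $(q_1,\dots,q_{k-1},\mathrm{id})$ embeds $\mathbb{P}^k$ into $\mathbb{P}^1\times\cdots\times\mathbb{P}^k$, and one checks that its graph closure is precisely the simultaneous resolution of the projections from the nested linear centers $W_k\subset W_{k-1}\subset\cdots\subset W_2$, i.e. the iterated blow-up $\widetilde{\mathbb{P}^k}$ (smallest center first). As $\Psi$ is a morphism agreeing on a dense open set with $(q_\bullet)\circ\pi_k$ and $\widetilde{\mathbb{P}^k}$ is closed in the product, $\Psi$ factors through a morphism $\Phi\colon\widetilde{|L_{g+k}|}\to\widetilde{\mathbb{P}^k}$; hence $\im\Psi\subseteq\widetilde{\mathbb{P}^k}$ and, a fortiori, $\im\psi\subseteq\widetilde{\mathbb{P}^k}$.

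Next I pin down the distinguished point. Since $\pi_k(|L_g|)=W_k$ is the point blown up first, $\widetilde{|L_g|}$ is contracted by $\Phi$ to a single point $\tilde o=\Phi(\widetilde{|L_g|})$ lying over $W_k$: indeed $\Psi_1(\widetilde{H_1})=\{p_1\}$, while for $2\le i\le k$ the map $\Psi_i$ contracts the exceptional divisor $E_{g+k-i}$ to a single point $q_i$. This last point is the crucial local computation: by Lemma \ref{inclusion}, $|L_{g+k-1-i}|$ is a hyperplane in $|L_{g+k-i}|$, so $\pi_i$ is already a morphism at a general $X\in|L_{g+k-i}|$ with constant value $q_i$ along $|L_{g+k-i}|\setminus|L_{g+k-1-i}|$; as $E_{g+k-i}$ lies over $|L_{g+k-i}|$ and is irreducible, $\Psi_i$ is forced to be constant $=q_i$ on it. Combined with the identification $\widetilde{|L_g|}=\widetilde{H_1}\cap\bigcap_{i=2}^k E_{g+k-i}$ from the proof of Proposition \ref{key1}, this yields the inclusion $\widetilde{|L_g|}\subseteq\Psi^{-1}(\tilde o)$.

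The heart of the argument, and the step I expect to be the main obstacle, is the reverse inclusion $\Psi^{-1}(\tilde o)\subseteq\widetilde{|L_g|}$: the point $\tilde o$ is exactly where the fibers of the $\Psi_i$ jump, so a priori $\bigcap_{i\ge2}\Psi_i^{-1}(q_i)$ could be strictly larger than $\bigcap_{i\ge2}E_{g+k-i}$. I would handle this by induction on $k$, mirroring the induction in Proposition \ref{key1}. Since $\Psi_1^{-1}(p_1)=\widetilde{H_1}$, one has $\Psi^{-1}(\tilde o)=\bigcap_{i=2}^k(\Psi_i|_{\widetilde{H_1}})^{-1}(q_i)$; under the isomorphism $\widetilde{H_1}\simeq\widetilde{|L_{g+k-1}|}$ the restricted linear systems satisfy $\mathfrak H_i|_{\widetilde{H_1}}=\mathfrak H_{i-1}'$, where the primed objects are the analogues for $\pi_{k-1,\delta}$, because $E_j|_{\widetilde{H_1}}$ is the exceptional divisor of $\pi_{k-1,\delta}$ for $g-\delta\le j\le g+k-3$. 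Thus $\Psi_i|_{\widetilde{H_1}}$ is identified with the lower-level map $\Psi'_{i-1}$ and the fiber condition becomes the one defining the analogous point $\tilde o'$ in the $(k-1)$-picture, so the inductive hypothesis gives $\bigcap_{i=2}^k(\Psi_i|_{\widetilde{H_1}})^{-1}(q_i)=\widetilde{|L_g|}$. The base case $k=1$ reads $\Psi^{-1}(\tilde o)=\Psi_1^{-1}(p_1)=\widetilde{H_1}=\widetilde{|L_g|}$, and this proves $\Psi^{-1}(\tilde o)=\widetilde{|L_g|}$.

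Finally I intersect with the Severi variety and count dimensions. Restricting $\Psi^{-1}(\tilde o)=\widetilde{|L_g|}$ to $\widetilde{V_\delta(S,L_{g+k})}$ and invoking Proposition \ref{key1} gives
\[
\psi^{-1}(\tilde o)=\widetilde{V_\delta(S,L_{g+k})}\cap\widetilde{|L_g|}=\widetilde{V_\delta(S,L_g)},
\]
which exhibits $\widetilde{V_\delta(S,L_g)}$ as a fiber of $\psi$, of dimension $g-\delta$. Since $\widetilde{V_\delta(S,L_{g+k})}$ is irreducible of dimension $g+k-\delta$ (for $k\gg0$, by Proposition \ref{irreducible}) and $\psi$ is proper, the existence of a fiber of the expected dimension $g-\delta=(g+k-\delta)-k$ forces $\dim\im\psi\ge k$; as $\im\psi$ is an irreducible closed subvariety of the $k$-dimensional $\widetilde{\mathbb{P}^k}$, we conclude $\im\psi=\widetilde{\mathbb{P}^k}$. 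Together with $\im\psi\subseteq\im\Psi\subseteq\widetilde{\mathbb{P}^k}$ this also gives $\im\Psi=\widetilde{\mathbb{P}^k}$, completing the proof.
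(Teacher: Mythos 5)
Your proof is correct, and it shares the paper's overall skeleton --- realize $\widetilde{\mathbb P^{k}}$ as the simultaneous resolution of the nested projections, identify the fiber over a distinguished point with $\widetilde{V_\delta(S,L_{g})}$ via Proposition \ref{key1}, and deduce surjectivity of $\psi$ (hence of $\Psi$) from the dimension count on the $(g-\delta)$-dimensional fiber, which is exactly the paper's closing step --- but the crucial middle step is implemented by a genuinely different mechanism. The paper first \emph{proves} the graph-closure description that you assert with ``one checks,'' by induction on $h$ using the universal property of blow-ups and the identification $\widetilde{\mathbb P^{h}}\simeq \widetilde{\mathbb P^{h-1}}\times_{\mathbb P^{h-1}}\mathrm{Bl}_{Q_h}\mathbb P^h$; this factorization then yields the pullback identities $E_{g+j}=\Psi^*(e_j)$ for $0\leq j\leq k-2$ and $\widetilde{H_i}=\Psi^*\widetilde{D_i}$, and since the support of a pulled-back effective divisor is the preimage of its support, the fiber of $\Psi$ over the single point $\xi=\widetilde{D_1}\cap\bigl(\bigcap_{i=2}^k \widetilde{D_i}+e_{k-i}\bigr)$ is \emph{tautologically} the intersection $\widetilde{H_1}\cap\bigl(\bigcap_{i=2}^k \widetilde{H_i}+E_{g+k-i}\bigr)$, with no further work: the ``reverse inclusion'' that you single out as the main obstacle never arises. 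You instead establish that inclusion by a second induction on $k$, restricting to $\widetilde{H_1}\simeq \widetilde{|L_{g+k-1}|}$ and matching $\Psi_i|_{\widetilde{H_1}}$ with the lower-level $\Psi'_{i-1}$; this is valid, but it quietly requires two verifications that the divisor-pullback route makes automatic: first, that the system cut out by $|\mathfrak H_i|$ on $\widetilde{H_1}$ is the \emph{complete} system $|\mathfrak H_{i-1}'|$ and not a proper subsystem (true, because hyperplanes of $|L_{g+k-1}|$ containing $|L_{g+k-1-i}|$ extend to hyperplanes of $|L_{g+k}|$ containing $|L_{g+k-1-i}|$, so the cut system spans; without this, $\Psi_i|_{\widetilde{H_1}}$ could be $\Psi'_{i-1}$ followed by a projection and fibers could grow), and second, that the distinguished coordinates match across levels, i.e.\ $q_i$ corresponds to $q'_{i-1}$ under the linear embedding $\mathbb P^{i-1}\hookrightarrow \mathbb P^i$ (immediate from $E_{g+k-i}\cap\widetilde{H_1}\neq\emptyset$ together with your two constancy statements, but worth recording). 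Conversely, your route never needs the explicit identities $E_{g+j}=\Psi^*(e_j)$, and your local computation that $\Psi_i$ contracts $E_{g+k-i}$ --- projecting the linear space $|L_{g+k-i}|$ from its hyperplane $|L_{g+k-1-i}|$ --- is a clean substitute for the paper's implicit use of the same fact; so the two arguments trade the paper's one-line divisor formalism against your more hands-on fiber analysis, and both rest on the same structural inputs ($\widetilde{H_1}\simeq\widetilde{|L_{g+k-1}|}$, Proposition \ref{key1}, and irreducibility of $\widetilde{V_\delta(S,L_{g+k})}$ from Proposition \ref{irreducible}).
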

\begin{proof}
We first show, by induction on $h$, that the image of $$(\Psi_1,\ldots,\Psi_h):\widetilde {|L_{g+k}|} \longrightarrow \mathbb P^1\times\cdots\times \mathbb P^h$$ coincides with subvariety $\widetilde{\mathbb P^{h}}$ obtained by blowing-up $\mathbb P^h$ first at the point  $Q_h:=\overline {\pi_{h}(|L_{g+k-h}}|\in\mathbb P^{h}$, then at the strict transform of the line $\overline {\pi_{h}(|L_{g+k-h+1}}|$ and so on, until finally, at the strict transform of the codimension $2$ subspace $\overline {\pi_{h}(|L_{g+k-2}}|\subset\mathbb P^{h}$. The case $h=1$ is trivial and the statement for $h-1$ yields that the image of $(\Psi_1,\ldots,\Psi_h)$ is contained in $\widetilde{\mathbb P^{h-1}}\times \mathbb P^h$. Denoting by $q:\widetilde{\mathbb P^{h}}\to \mathbb P^{h}$ the blow-up map, the universal property of blow-ups yields a factorization $	\Psi_h=\widetilde{\Psi_h}\circ q$ for some morphism $$\widetilde{\Psi_h}:\widetilde {|L_{g+k}|} \to \widetilde{\mathbb P^h}.$$ By resolving the projection $\mathbb P^h\dashrightarrow  \mathbb P^{h-1}$ from the point $Q_h$, we obtain a morphism $\mathrm{Bl}_{Q_h}\mathbb P^h\to \mathbb P^{h-1}$ and an isomorphism $\widetilde{\mathbb P^{h}}\simeq \widetilde{\mathbb P^{h-1}}\times_{\mathbb P^{h-1}}\mathrm{Bl}_{Q_h}\mathbb P^h$. In particular, by considering the projection  $p:\widetilde{\mathbb P^{h}}\to \widetilde{\mathbb P^{h-1}}\subset \mathbb P^1\times\cdots\times \mathbb P^{h-1}$, we get a factorization $(\Psi_1,\ldots,\Psi_{h-1})=\widetilde{\Psi_h}\circ p$. We have thus proved that $$(\Psi_1,\ldots,\Psi_h)=\widetilde{\Psi_h}\circ (p,q)$$ and, since the map $(p,q): \widetilde{\mathbb P^{h}}\to \widetilde{\mathbb P^{h-1}}\times \mathbb P^h\subset \mathbb P^1\times\cdots\times \mathbb P^h$ is the natural inclusion, this implies that the image of $(\Psi_1,\ldots,\Psi_h)$ is $\widetilde{\mathbb P^{h}}$.

We now denote by $e_0,\ldots, e_{k-2}$ the exceptional divisors of $q: \widetilde{\mathbb P^{k}}\to \mathbb P^k$, by numbering them so that $q(e_j)$ has dimension $j$. By construction,  we have $E_{g+j}=\Psi^*(e_{j})$ for every $0\leq j\leq k-2$. Furthermore, on $\widetilde{\mathbb P}^k$ there exist divisors $\widetilde{D_k}\in  |q^*\mathcal{O}_{\mathbb P^k}(1)| $ and $\widetilde {D_i}\in |q^*\mathcal{O}_{\mathbb P^k}(1)-\sum_{j=i-1}^{k-2}e_j|$ for $1\leq i\leq k-1$ such that $\widetilde{H_i}=\Psi^*{\widetilde D_i}$ for $1\leq i\leq k$. It then follows that the intersection  
$$ \widetilde{H_1}\cap \left(\bigcap_{i=2}^k \widetilde{H_i}+E_{g+k-i} \right)
$$
is the inverse image under $\Psi$ of $\widetilde {D_1}\cap \left(\bigcap_{i=2}^k\widetilde {D_i}+ e_{k-i}\right)$ and this consists of the point $\xi\in e_0\cap e_1\cap\ldots\cap e_{k-2}$ determined by $\mathbb P(N_{D_1/\mathbb P^k,Q_k})$, where $D_1=q(\widetilde D_1)$. Proposition \ref{key1} then implies that $\psi^{-1}(\xi)\simeq\widetilde{V_\delta(S,L_{g}) }$; since this has dimension $g-\delta$, the restriction $\psi$ of $\Psi$ to $\widetilde{V_\delta(S,L_{g}) }$ still surjects onto $\widetilde{\mathbb P^{k}}$.
\end{proof}
\begin{thm}\label{main}
For every $g\geq 1$ and every $0\leq \delta\leq g-1$ the closure of the Severi variety $\overline{V_\delta(S,L_{g}) }\subset |L_g|$ is connected. 
\end{thm}
\begin{proof}
We fix $k>>0$ and consider the intersection $|L_g|\cap \overline{V_\delta(S,L_{g+k}) }$ inside $|L_{g+k}|$. Since $\delta$ is small with respect to $g+k$, then $\overline{V_\delta(S,L_{g+k}) }$ is irreducible by Proposition \ref{irreducible}. Since $\overline{V_\delta(S,L_{g}) }=\pi_{k,\delta}(\widetilde{V_\delta(S,L_{g}) }$, it is enough to prove the connectedness of $\widetilde{V_\delta(S,L_{g}) }$, which is a fiber of $\psi:\widetilde{V_\delta(S,L_{g}) }\to \widetilde{\mathbb P^k}$ by Lemma \ref{blow}. 

By construction, the fiber $\pi_{k,\delta}^{-1}(X)$ of $\pi_{k,\delta}$ over any $X\in |L_{g-1}|\setminus  |L_{g-2}|$ is a $\mathbb P^k=\mathbb P(N_{|L_{g-1}|/|L_{g+k}|,X})$ blown-up at the point $\mathbb P(N_{|L_{g-1}|/|L_{g}|,X})$ (the exceptional divisor being identified with $\mathbb P(N_{|L_{g}|/|L_{g+k}|,X})$) and then at the strict transform of the line $\mathbb P(N_{|L_{g-1}|/|L_{g+1}|,X})$ (with exceptional divisor given by a $\mathbb P(N_{|L_{g+1}|/|L_{g+k}|,X})=\mathbb P^{k-2}$-bundle) and so on until, finally, at the strict transform of $\mathbb P(N_{|L_{g-1}|/|L_{g+k-2}|,X})$ (the exceptional divisor over it being a $\mathbb P(N_{|L_{g+k-2}|/|L_{g+k}|,X})=\mathbb P^1$-bundle). Hence, $\Psi$ maps $\pi_{k,\delta}^{-1}(X)$ isomorphically onto $\widetilde{\mathbb P^k}$, that is, $\pi_{k,\delta}^{-1}(X)$ defines a section of $\Psi: \widetilde{|L_{g+k}|}\to \widetilde{\mathbb P^k}$. In order to ensure that this restricts to a section of $\psi$,  we need to choose $X$ so that $X=(k+1)J+ C\in |L_{g-1}|\subset |L_{g+k}|$ for some irreducible $ C\in |L_{g-1}|$ with precisely $\delta$ nodes, that is, $ C\in V_\delta(S,L_{g-1})$. We claim that with this choice of $X$ the fiber $\pi_{k,\delta}^{-1}(X)$ is contained in $\widetilde{V_\delta(S,L_{g+k})}$ and thus defines a section of $\psi$. Assuming this, we may apply, e.g., \cite[Lem. 3]{KoL} and conclude that a general fiber of $\psi$ is connected. Since $\widetilde{\mathbb P^k}$ is smooth, by a classical argument using the Stein factorization of $\psi$ (cf., e.g., \cite[proof of Thm. 2.1]{FL}) we obtain connectedness of any fiber of $\psi$ and thus in particular of $\widetilde{V_\delta(S,L_{g}) }$.

It remains to prove the claim. Proposition \ref{cz} yields
$$
V_\delta(S,L_{g-1})\subset \overline{V_{\delta+k}(S,L_{g+k})}\subset \overline{V_{\delta}(S,L_{g+k})},
$$
and we recall that $\overline{V_{\delta}(S,L_{g+k})}$ is singular at the points of $\overline{V_{\delta+k}(S,L_{g+k})}$ (cf. \cite{DH}). More precisely, in a neighborhood of a general $Y\in V_{\delta+k}(S,L_{g+k})$ the locus $\overline{V_{\delta}(S,L_{g+k})}$ is the union of $\delta+k \choose \delta$ sheets corresponding to the possibilities of choosing $\delta$ nodes among the $\delta+k$ nodes of $Y$. Let $V$ be the sheet of $\overline{V_{\delta}(S,L_{g+k})}$ around the point $X=(k+1)J+ C$ corresponding to the choice of the $\delta$ nodes of $X$ lying on $C$, and denote by $N$ the scheme of nodes of $C$. In order to show that $\pi_{k,\delta}^{-1}(X)$ is contained in $\widetilde{V_\delta(S,L_{g+k})}$, we need to check that 
\begin{equation}\label{tennis1}
N_{V\cap |L_{g-1+h}|/V,X}\simeq N_{|L_{g-1+h}|/|L_{g+k}|,X},\,\, \textrm{  for every  }0\leq h\leq  k-1.\end{equation}
Since $V_{\delta}(S,L_{g-1})$ is smooth at $C$, then $H^1(S,L_{g-1}\otimes I_N)=0$ and one obtains $H^1(S,L_{g-1+h}\otimes I_N)=0$ for every $1\leq h\leq  k-1$ by considering the following short exact sequence:
$$
0\longrightarrow L_{g-1}\otimes I_N\longrightarrow L_{g-1+h}\otimes I_N\longrightarrow L_{g-1+h}|_{hJ}\longrightarrow 0.
$$
We thus have the following commutative diagram:
$$
\xymatrix{
&0\ar[d]&0\ar[d]\\
0\ar[r]&H^0(S,L_{g-1+h}\otimes I_N)\ar[r]\ar[d]&H^0(S,L_{g+k}\otimes I_N)\ar[r]\ar[d]&H^0(L_{g+k}|_{(k+1-h)J})\ar[r]\ar@{=}[d]&0\\
0\ar[r]&H^0(S,L_{g-1+h})\ar[r]\ar[d]&H^0(S,L_{g+k})\ar[r]\ar[d]&H^0(L_{g+k}|_{(k+1-h)J})\ar[r]&0.\\
&H^0(\mathcal O_N)\ar[d]\ar@{=}[r]&H^0(\mathcal O_N)\ar[d]\\
&0&0\\
}
$$
The middle horizontal short exact provides an identification of the normal space $N_{|L_{g-1+h}|/|L_{g+k}|,X}$ with $H^0(L_{g+k}|_{(k+1-h)J})\simeq \mathbb C^{k+1-h}$. Since the sheet $V$ corresponds to the choice of the $\delta$ nodes of $X$ lying on $C$, the projective tangent space to $V$ at $X$ is isomorphic to $\mathbb{P}(H^0(S,L_{g+k}\otimes I_N))$, and  analogously the projective tangent space to $V\cap |L_{g-1+h}|$ at $X$ is isomorphic to $\mathbb{P}(H^0(S,L_{g-1+h}\otimes I_N))$. Hence, the upper horizontal short exact sequence identifies the normal space $N_{V\cap |L_{g-1+h}|/V,X}$ with $H^0(L_{g+k}|_{(k+1-h)J})$ and this implies the desired isomorphisms \eqref{tennis1}.
\end{proof}
\subsection{Connectedness result for expanded degenerations}
We consider the moduli stack $ \M_{g-\delta}(S/J,L_g)$ of stable relative maps to expanded degenerations of $(S,J)$ with multiplicity $1$ along  the relative divisor $J$, introduced by Jun Li \cite{Li1,Li2}. An expanded degeneration of $S$ along $J$ is a semistable model of $S$ 
$$
S[n]_0:=S\cup_JR\cup_J\cdots\cup_JR,\,\,\,\,R:=\PP(\mathcal O_J\oplus  N_{J/S})
$$ which is union of $S$ with a length-$n$ tree of ruled surfaces $R$ as above for some $n\geq 0$. More precisely, denoting by $J_0$ and $J_{\infty}$ the two distinguished sections on $R$ such that $N_{J_0/R}\simeq N_{J/S}^\vee$ and $N_{J_\infty/R}\simeq N_{J/S}$, the above expansion $S[n]_0$ is obtained by gluing the first copy of $R$ with $S$ along $J_0$, while two adjacent copies of $R$ are glued identifying the $J_{\infty}$ on the left surface with the $J_0$ on the right one. The relative divisor $J$ is the section $J_\infty$ on the latter copy of $R$. 

Since $J\cdot L_g=1$ on $S$, stable maps on expansions of $S$ can be easily described. In particular, a stable relative map  of genus $g-\delta$ to the expansion $S[n]_0$ is a map $f:C\to S[n]_0$ from a connected prestable curve of genus $g-\delta$ such that no component of $C$ is mapped entirely to the singular locus of $S[n]_0$ or to the divisor $J_{\infty}$ on the last copy of $R$, the inverse image of every component of the singular locus of $S[n]_0$ is a node of $C$ connecting two irreducible components of $C$ that are mapped to two adjacent components of $S[n]_0$, and the automorphism group of $f$ is finite. Furthermore, in order for $f$ to define a point of $\M_{g-\delta}(S/J,L_g)$, we require that the image of the curve $f(C)\subset S[n]_0$ under the projection $S[n]_0\to S$ lies in $|L_g|$. Any such map can be thus decomposed as 
\begin{equation}\label{stabledec}
f=f_0\cup\cdots\cup f_n:C=C_0\cup C_1\cup \cdots \cup C_n\to S[n]_0,
\end{equation} where every $C_i$ is an irreducible curve, two adjacent $C_i$ share a node of $C$, and $f_i(C_i)$  is contained in the $i$-th copy of $R$ if $i\geq 1$, while $f_0(C_0)\subset S$. Denoting by $h_i\geq 0$ the arithmetic genus of $C_i$ and by $g_i\geq h_i$ the arithmetic genus of $f(C_i)$, the integers $h_i$ and $g_i$ satisfy $\sum_{i=0}^ng_i=g$ and $\sum_{i=0}^nh_i=g-\delta$. Furthermore, the curve $f_0(C_o)\in |L_{g_0}|$, while for $i\geq 1$ the numerical class of $f_i(C_i)$ is $g_iJ_0+f$ on $R$ (where $f$ is the class of a fiber of $R\to J$), and its linearly equivalence class is determined by the gluing condition as follows. Setting $\underline g=(g_0,\ldots,g_n)$ and $x_1(\underline g):=p_{10}(g_0+g_1)$, we have $f(C_1)\in |N_{1}(\underline g)|$ with $N_{1}(\underline g):= g_1J_0+f_{x_1(\underline g)}$. As in \cite[\S 2]{FT}, one verifies that $N_{1}(\underline g)$ has two base points, namely, $p_{10}(g_0)\in J_0$ and $x_1(\underline g)\in J_{\infty}$. Analogously, setting $x_i(\underline g):=p_{10}(g_0+\cdots+g_{i})$ for every $1\leq i\leq n$, we have $f(C_i)\in |N_{i}(\underline g)|$ where the line bundle $N_i(\underline g):=g_iJ_0+f_{x_i(\underline g)}$ has base points $x_{i-1}(\underline g)\in J_0$ and $x_{i}(\underline g)\in J_\infty$. In particular, the evaluation map
$$\M_{g-\delta}(S/J,L_g)\to J$$
at the relative divisor always takes the value $p_{10}(g)$. The stability condition implies that a stable map to $S[n]_0$ has no component mapping to a fiber of a ruled surface $R$. In particular, we have $g_i>0$ for $i>0$ and, since the linear systems $|N_i(\underline g)|$ contains no rational curve when $g_i>0$, also $h_i>0$.

The multiplicative group $\mathbb C^*$ acts fiberwise on $R$ preserving the sections $J_0$ and $J_\infty$; this induces an action of $(\mathbb C^*)^n$ on $S[n]_0$ for every $n\geq 1$. Two stable maps with target $S[n]_0$ are considered equivalent if they differ by the action of an element of $(\mathbb C^*)^n$ on $S[n]_0$. 
Summing up, thanks to the decomposition \eqref{stabledec}, a point $[f]\in \M_{g-\delta}(S/J,L_g)$ representing a stable map with target $S[n]_0$ defines points of the following moduli stacks
$$[f_0]\in \M_{h_0}(S,L_{g_0})\,\,,\,\,[f_i]\in  \M_{h_i}(R,N_{i}(\underline g))/\mathbb{C}^* \textrm{  for  }i\geq 1.$$
We will later use the following result concerning equigeneric loci in the linear systems $|N_i(\underline g)|$ on $R$.
\begin{prop}\label{ruled}
Let $R$ and $N_i(\underline g):=g_iJ_0+f_{x_i(\underline g)}\in \Pic(R)$ be as above. Then for every integer $1\leq h_i \leq g_i$ the following hold:
\begin{itemize}
\item[(i)] Both the equigeneric locus $V^{h_i}(R,N_i(\underline g))\subset |N_i(\underline g)|$ and the moduli stack of smoothable stable maps $\M_{h_i}(R,N_i(\underline g))^{\mathrm{sm}}$ with image in the linear system $|N_i(\underline g)|$ have pure dimension $ h_i$ and are generically reduced.
\item[(ii)] Let $V$ and $W$ be two intersecting components of $\overline{V^{h_i}(R,N_i(\underline g))}$ and let $Z$ be an irreducible component of $V\cap W$ whose general point parametrizes a curve containing neither $J_0$ nor $J_{\infty}$; then $Z$ has pure codimension $1$ and is generically reduced. 
\end{itemize}
\end{prop}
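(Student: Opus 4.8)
The plan is to run both parts in close parallel with Propositions~\ref{dimension} and \ref{cohen}, the one genuinely new feature being that $R$ is irregular: as $J$ is elliptic, $h^1(\mathcal O_R)=1$ and $\Pic^0(R)\simeq\Pic^0(J)\simeq J$. First I record the numerology on $R$. Writing $s$ for the common numerical class of the two sections $J_0,J_\infty$ and $f$ for the fibre class, one has $s^2=0$, $s\cdot f=1$, $f^2=0$ and $K_R\equiv-2s$. Hence the class $\beta$ of $N_i(\underline g)=g_iJ_0+f_{x_i(\underline g)}$ satisfies $\beta^2=2g_i$ and $\beta\cdot K_R=-2$, so curves in $|N_i(\underline g)|$ have arithmetic genus $g_i$; moreover $\beta\cdot J_0=\beta\cdot J_\infty=1$, so a general such curve meets each section transversally at exactly the two base points $x_{i-1}(\underline g)\in J_0$ and $x_i(\underline g)\in J_\infty$. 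Pushing $N_i(\underline g)$ forward along $\pi\colon R\to J$ expresses $\pi_*N_i(\underline g)$ as a direct sum of $g_i+1$ degree-one line bundles on the elliptic curve $J$, whence $h^0(N_i(\underline g))=g_i+1$, $h^1(N_i(\underline g))=0$, and $\dim|N_i(\underline g)|=g_i$.

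For part (i) I would follow the proof of Proposition~\ref{dimension}. Let $C$ be general in a component $V$ of $V^{h_i}(R,N_i(\underline g))$ and let $f\colon\tilde C\to R$ be the composition of the normalization with the inclusion. Riemann-Roch gives $\chi(N_f)=-K_R\cdot\beta+h_i-1=h_i+1$. For the lower bound, deformation theory yields $\dim_{[f]}M(\beta)\ge\chi(N_f)=h_i+1$ for the stack of genus-$h_i$ maps in the numerical class $\beta$; the Abel-Jacobi map $M(\beta)\to\Pic^\beta(R)\simeq J$, sending a map to the class of its image, has $1$-dimensional target and fibre over $[N_i(\underline g)]$ equal to $\M_{h_i}(R,N_i(\underline g))^{\mathrm{sm}}$, so the latter has dimension $\ge h_i$ and maps finitely onto $V^{h_i}(R,N_i(\underline g))$, giving $\dim V\ge h_i$. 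For the matching upper bound I would invoke Arbarello-Cornalba \cite{AC} as in Proposition~\ref{dimension}: one has $N_f\simeq\omega_{\tilde C}\otimes f^*(-K_R)$ with $\deg f^*(-K_R)=-K_R\cdot\beta=2$ concentrated over the two base points, so $h^0(\overline N_f)\le h^0(\omega_{\tilde C}\otimes f^*(-K_R))=h_i+1$; the surplus section is precisely the one detecting $H^1(\mathcal O_R)$, and the constraint that deformations stay in the fixed linear system $|N_i(\underline g)|$ removes it (equivalently, the composite $H^0(\overline N_f)\to H^0(N_{C/R})\to H^1(\mathcal O_R)\simeq\CC$ is onto), so $\dim V\le h_i$. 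Purity follows, and equality forces the ramification divisor of $f$ to vanish; hence a general member of $V$ is immersed, so $h^1(N_f)=h^0(f^*K_R)=0$ and both $V^{h_i}(R,N_i(\underline g))$ and $\M_{h_i}(R,N_i(\underline g))^{\mathrm{sm}}$ are generically reduced.

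For part (ii) I would transplant the argument of Proposition~\ref{cohen} and Lemma~\ref{lemcz}, with the two sections $J_0,J_\infty$ playing the role of the anticanonical curve $J$ on $S$. Set $\delta':=g_i-h_i$ and let $U\subset|N_i(\underline g)|$ be the complement of the curves containing $J_0$ or $J_\infty$; over $U$ the nodes of a $\delta'$-nodal curve avoid both sections and define a point of $R^{[\delta']}$. I would realize the incidence variety $I=\overline{\{(C,z)\in U\times R^{[\delta']}\colon C\in|N_i(\underline g)\otimes I_z^2|\}}$ as the degeneracy locus $D(\widetilde\phi)$ of a map $\widetilde\phi\colon\mathcal U\to\pi^*F$ of bundles on $\PP(E)\simeq|N_i(\underline g)|\times R^{[\delta']}$, where $E=q_*p^*N_i(\underline g)$ has rank $g_i+1$ and $F=q_*(p^*N_i(\underline g)|_{2\Delta})$ has rank $3\delta'$; its expected dimension is $g_i-\delta'=h_i$. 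An analogue of Lemma~\ref{lemcz} on $R$, proved by the same nested-Hilbert-scheme and adjoint-ideal bookkeeping (which is insensitive to the ambient smooth surface), shows that the locus of non-finite fibres of $I\to|N_i(\underline g)|$ away from the excluded sections has dimension $\le h_i-2$; hence $I$ has pure dimension $h_i$ and consists of components of $D(\widetilde\phi)$ of expected dimension, so it is Cohen-Macaulay off the bad locus by \cite[II, Prop.~4.1]{ACGH}. Hartshorne's Connectedness Theorem \cite[Thm.~18.12]{Ei} then forces every component $Z$ of the intersection of two components of $\overline{V^{h_i}(R,N_i(\underline g))}$ whose general member avoids $J_0$ and $J_\infty$ to have codimension $1$ and to be generically reduced, the generic reducedness along the deeper stratum $\overline{V^{h_i-1}}$ being inherited from part (i) exactly as in Proposition~\ref{cohen}.

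The main obstacle is the dimension bookkeeping in part (i): because $R$ is irregular the naive count $\chi(N_f)=h_i+1$ overshoots the desired value by $\dim\Pic^0(R)=1$, and one must show cleanly that fixing the linear system $|N_i(\underline g)|$ removes exactly this one dimension, both in the lower bound (the Abel-Jacobi map to $J$ is a genuine submersion, so translating $C$ by $\Pic^0(R)$ produces equigeneric deformations leaving $|N_i(\underline g)|$) and in the Arbarello-Cornalba upper bound (the extra global section of $\omega_{\tilde C}\otimes f^*(-K_R)$ is the one pairing nontrivially with $H^1(\mathcal O_R)$). Everything else is a faithful transcription of the rational-surface arguments, $R$ being smooth and the Hilbert-scheme and Cohen-Macaulayness inputs being surface-independent.
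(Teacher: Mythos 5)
Your proposal is correct and follows essentially the same route as the paper: part (ii) is exactly the paper's argument, which transplants Proposition~\ref{cohen} and Lemma~\ref{lemcz} after observing that density of the Severi variety in the equigeneric locus was never used there and that the only reducible members of $|N_i(\underline g)|$ lie in the two hyperplanes $J_0+|(g_i-1)J_0+f_{x_i(\underline g)}|$ and $J_\infty+|(g_i-1)J_0+f_{y_i(\underline g)}|$ (your excluded locus). For part (i) the paper performs the same ``irregularity costs exactly one dimension'' bookkeeping, only packaged dually: it applies the Arbarello--Cornalba bound to the stack $\M_{h_i}(R,g_iJ_0+f)^{\mathrm{sm}}$ with just the numerical class fixed (pure dimension $h_i+1$, general map unramified, hence generically reduced) and then realizes $\M_{h_i}(R,N_i(\underline g))^{\mathrm{sm}}$ as a fiber of the evaluation map to $J_0$ --- equivalent to your Abel--Jacobi fibration, since the intersection point with $J_0$ determines the linear class, and this packaging sidesteps the surjectivity onto $H^1(\mathcal O_R)$ that your version of the upper bound still needs to verify.
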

\begin{proof}
Let $\eta\in \Pic^0(J)$ be the line bundle such that $R=\mathbb P(\mathcal O_J\oplus\eta)$ and denote by $\phi: R\to J$ the natural projection; we have $J_{\infty}\equiv J_0-\phi^*(\eta)$. We recall from \cite{FT} that curves in $|N_i(\underline g)|$ have arithmetic genus $g_i$ and $\dim |N_i(\underline g)|=g_i$. According to our notation, the moduli stack $\M_{h_i}(R,N_{i}(\underline g))$ parametrizes maps $f$ such that $f(C)$ lies in the linear system $|N_{i}(\underline g)|$; this is a closed  substack of the moduli stack $\M_{h_i}(R,g_iJ_0+f)$ where only the numerical class of $f(C)$ is fixed. Let $\M_{h_i}(R,N_{i}(\underline g))^{sm}$ and $\M_{h_i}(R,g_iJ_0+f)^{sm}$ be the closed substacks parametrizing smoothable maps. The deformations of a map $[f]\in \M_{h_i}(R,g_iJ_0+f)^{sm}$ are governed by the normal sheaf $N_f$. As in the proof of Proposition \ref{dimension}(ii)-(iii), one shows that a general $[f]$ in any irreducible component of $\M_{h_i}(R,g_iJ_0+f)^{sm}$ is unramified and thus $\M_{h_i}(R,g_iJ_0+f)^{sm}$ is generically reduced and has pure dimension $h_i+1$. Being a fiber of the evaluation map $\M_{h_i}(R,g_iJ_0+f)^{sm}\to J_0$, the stack $\M_{h_i}(R,N_{i}(\underline g))^{sm}$ is generically reduced and of pure dimension $h_i$. The same holds true for the equigeneric locus $\overline{V^{h_i}(R,N_i(\underline g))}$ thanks to the existence of a birational map $\tilde\mu: \tilde\M_{h_i}(R,g_iJ_0+f)^{sm}\to \overline{V^h(R,N_i(\underline g))}$ from the semi-normalization $ \tilde\M_{h_i}(R,g_iJ_0+f)^{sm}$ of $ \tilde\M_{h_i}(R,g_iJ_0+f)^{sm}$. This proves (i).

To obtain (ii) one proceeds exactly as in the proofs of Proposition \ref{cohen} and Lemma \ref{cz}, where the density of the Severi variety in the equigeneric locus (whose validity on $R$ is unknown) was never used. The proofs work the same way because all curves in the linear system $|N_i(\underline g)|$ are irreducible except for those lying in the two hyperplanes of $|N_i(\underline g)|$ defined by the linear subsystem $J_0+ |(g_i-1)J_0+f_{x_i(\underline g)}|$ and $J_\infty+ |(g_i-1)J_0+f_{y_i(\underline g)}|$, where we have set $y_i(\underline g):= p_{10}(g_0+\cdots+g_{i}-1)$.
\end{proof}
\color{black}

Coming back to $ \M_{g-\delta}(S/J,L_g)$, this is a proper and separated Deligne-Mumford stack by \cite[Thm. 3.10]{Li1}. We briefly recall why it is DM. By Li's construction, there is a scheme $$S[n]\subset S\times \mathbb A^n$$ combining all possible expansions $S[k]_0$ for $0\leq k\leq n$; in particular, a general fiber of the projection $S[n]\to \mathbb A^n$ is isomorphic to $S$, the central fiber over $0\in \mathbb A^n$ is the $n$-th expansion $S[n]_0$, while the fibers over any coordinate ($n-k$)-dimensional plane in $\mathbb A^n$ are isomorphic to $S[k]_0$. The natural action of $(\mathbb C^*)^n$ on $\mathbb A^n$ lifts to an action on $S[n]$ so that its restriction to $S[n]_0$ is trivial on $S$, while the $i$-th copy of $\mathbb C^*$ acts on the $i$-th copy of $R$ fiberwise so that $J_0$ and $J_\infty$ are fixed. Let us consider the projection
$$\beta_n: S[n]\to S$$
and denote by $\M_{g-\delta}(S[n],L_g)$ the DM stack of ordinary stable maps $f:C\to S[n]$ such that the image of $\beta_n(f(C))\in |L_g|$. Let $ \M_{g-\delta}(S[n],L_g)^{Li}$ be the closed DM substack parametrizing stable (in J. Li's sense recalled above) maps to some expanded degeneration $S[k]_0$ with $k\leq n$; this admits a $(\mathbb C^*)^n$ action that is induced by the one on $S[n]$ and has finite stabilizers. The fact that $ \M_{g-\delta}(S/J,L_g)$ is DM thus follows from the existence of a surjective étale map $$\M_{g-\delta}(S[g],L_g)^{Li}/(\mathbb C^*)^g\to  \M_{g-\delta}(S/J,L_g).$$

We denote by $\M_{g-\delta}(S/J,L_g)^{sm}$ the locus in $\M_{g-\delta}(S/J,L_g)$ parametrizing stable maps which are smoothable. In particular, a general point in any component of $ \M_{g-\delta}(S/J,L_g)^{sm}$ parametrizes a stable map with irreducible domain and target precisely $S$. 

Now we fix $k>>0$ as in the proof of Theorem \ref{main} so that $\overline{V_\delta(S,L_{g+k}) }$ is irreducible. In particular, the moduli stack $\M_{g+k-\delta}(S/J,L_{g+k})^{sm}$ and its semi-normalization $\tilde\M_{g+k-\delta}(S/J,L_{g+k})^{sm}$ are irreducible, too. We consider the natural map 
$$
\alpha:\tilde\M_{g+k-\delta}(S/J,L_{g+k})^{sm}\to |L_{g+k}|
$$ 
that sends the class of a stable map $f:C\to S[n]_0$ for $0\leq n\leq g+k$ to the image of $f(C)$ under the map $S[n]_0\to S$. 
\begin{lem}\label{federico}
The map $\alpha$ factors through a map 
\begin{equation}\label{secondo}
\tilde \alpha: \tilde\M_{g+k-\delta}(S/J,L_{g+k})^{sm}\to \widetilde{|L_{g+k}|},\end{equation}
 so that the image of $
\tilde \alpha$ is the strict transform $\widetilde{V_\delta(S,L_{g+k}) }$ of $\overline{V_\delta(S,L_{g+k}) }$ in the blow-up $\widetilde{|L_{g+k}|}$ of $|L_{g+k}|$. 
\end{lem}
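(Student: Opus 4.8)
The plan is to construct $\tilde\alpha$ through the universal property of the blow-up $\pi_{k,\delta}$ and then to identify its image by an irreducibility argument. Recall that $\pi_{k,\delta}$ is the iterated blow-up of $|L_{g+k}|$ along the flag of linear subspaces $|L_{g-\delta}|\subset |L_{g-\delta+1}|\subset\cdots\subset|L_{g+k-2}|$. For a morphism from a reduced stack to lift along such an iterated blow-up it suffices that the pullback of the ideal sheaf $I_{|L_j|}$ be invertible for every $g-\delta\leq j\leq g+k-2$: since the centers form a flag, the ideals $I_{|L_j|}$ are nested, so at each stage the relevant strict-transform ideal pulls back to the quotient of two consecutive invertible ideals and is therefore again invertible (compare \cite[Prop. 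IV-21]{EH}). Thus everything reduces to proving that $\alpha^*I_{|L_j|}$ is an invertible ideal sheaf on $\tilde\M_{g+k-\delta}(S/J,L_{g+k})^{sm}$.

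To check this I work in an \'etale chart around a point $[f]$ representing a stable map to an expansion $S[n]_0$ of type $\underline g=(g_0,\ldots,g_n)$, so that $\alpha([f])=f_0(C_0)+(g_1+\cdots+g_n)J\in |L_{g_0}|$. Li's construction of $S[n]$ over $\mathbb A^n$ supplies local smoothing parameters $t_1,\ldots,t_n$, where $t_m$ smooths the node lying over the $m$-th singular copy of $J$; turning on $t_1,\ldots,t_m$ merges $R_1,\ldots,R_m$ into $S$ and raises the projected image from level $g_0+\cdots+g_{m-1}$ to level $g_0+\cdots+g_m$. Choosing affine coordinates on $|L_{g+k}|$ adapted to the flag $|L_{g_0}|\subset|L_{g_0+g_1}|\subset\cdots\subset|L_{g+k}|$ realized by this expansion, the deformation analysis of the image curve (governed, as in Proposition \ref{dimension}, by the normal sheaf $N_f$, and carried out inside the relative linear system of \cite{LW}) shows that the coordinate directions normal to $|L_{g_0+\cdots+g_{m-1}}|$ inside $|L_{g_0+\cdots+g_m}|$ pull back to $t_1\cdots t_m$ times a unit. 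Consequently, for $j\geq g_0$ one obtains
$$
\alpha^*I_{|L_j|}=(t_1\cdots t_{m(j)}),\qquad m(j):=\min\{m:\ g_0+\cdots+g_m>j\},
$$
a principal monomial ideal, whereas for $j<g_0$ the curve $C_0$ already avoids $|L_j|$ and $\alpha^*I_{|L_j|}$ is the unit ideal. In either case $\alpha^*I_{|L_j|}$ is invertible, and as $j$ increases the integers $m(j)$ are non-decreasing, so the monomials $t_1\cdots t_{m(j)}$ are nested by divisibility and the consecutive quotients $t_{m(j)+1}\cdots t_{m(j+1)}$ are again monomials. This is precisely the nesting needed for the iterated lift, so $\alpha$ factors through a morphism $\tilde\alpha\colon \tilde\M_{g+k-\delta}(S/J,L_{g+k})^{sm}\to\widetilde{|L_{g+k}|}$ with $\pi_{k,\delta}\circ\tilde\alpha=\alpha$. (The parameters $t_m$ live only on the \'etale $(\mathbb C^*)^{g+k}$-cover, but the ideals $\alpha^*I_{|L_j|}$ are pulled back from $|L_{g+k}|$, hence invariant, and the divisors $\{t_1\cdots t_m=0\}$ are $(\mathbb C^*)^{g+k}$-invariant and descend, so no compatibility issue arises on the quotient stack.)

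It remains to identify the image of $\tilde\alpha$. Since $k\gg0$, Proposition \ref{irreducible} gives that $\overline{V_\delta(S,L_{g+k})}$, and hence the stack $\tilde\M_{g+k-\delta}(S/J,L_{g+k})^{sm}$ recalled just before the statement, is irreducible of dimension $g+k-\delta$. The source is proper and $\widetilde{|L_{g+k}|}$ is separated, so $\tilde\alpha$ is proper and its image is closed and irreducible. A general point of the source represents a map with irreducible smooth domain and target $S$ itself, whose image is a $\delta$-nodal curve lying in $V_\delta(S,L_{g+k})\setminus\bigcup_j|L_j|$, i.e. in the open locus where $\pi_{k,\delta}$ is an isomorphism; thus $\tilde\alpha$ sends the generic point of the source into the strict transform $\widetilde{V_\delta(S,L_{g+k})}$. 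As both $\widetilde{V_\delta(S,L_{g+k})}$ and the image have dimension $g+k-\delta$ and the image is closed and irreducible, the image equals $\widetilde{V_\delta(S,L_{g+k})}$.

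The main obstacle is the local monomial computation of the second paragraph: one must prove that Li's smoothing parameter $t_m$ controls, to first order and with nonvanishing leading coefficient, precisely the rise of the projected image from the subspace $|L_{g_0+\cdots+g_{m-1}}|$ to $|L_{g_0+\cdots+g_m}|$. Equivalently, the family of images $\beta_n(f_t(C_t))$, viewed inside the relative linear system attached to $S[n]\to\mathbb A^n$ via \cite{LW}, must meet the flag $|L_{g_0+\cdots+g_m}|$ transversally, with multiplicity exactly one, along each divisor $\{t_m=0\}$. Once this normal-crossing behaviour of $\alpha$ with respect to the flag is established, invertibility of all the pullback ideals---and with it the whole lemma---follows formally.
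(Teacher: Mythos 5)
Your proof has a genuine gap at exactly the point you yourself flag as ``the main obstacle'': the claim that, in an \'etale chart around $[f]$, one has $\alpha^*I_{|L_j|}=(t_1\cdots t_{m(j)})$ up to a unit --- equivalently, that the family of projected images meets each member of the flag transversally, with multiplicity exactly one, along the boundary divisors $\{t_m=0\}$ --- is asserted but never proved. Nothing in your deformation-theoretic sketch delivers it: the normal sheaf $N_f$ (as in Proposition \ref{dimension}) controls deformations of the map, not the order of vanishing, in the smoothing parameters, of the linear coordinates of the projected image; moreover you are working on the seminormalization of the smoothable locus, whose local ring at $[f]$ you have no a priori control over, so ``a monomial times a unit'' cannot simply be read off from a chart of Li's stack. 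Since your whole lifting scheme (correctly) reduces the lemma to the invertibility of the ideals $\alpha^*I_{|L_j|}$, the proposal is incomplete at its one load-bearing step; the rest --- the nested-quotient trick for the strict-transform centers and the identification of the image by irreducibility and properness --- is fine and matches the paper.

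The paper closes precisely this gap not by a local coordinate computation but by importing it from Jun Li's degeneration theory. It maps $\M_{g+k-\delta}(S/J,L_{g+k})$ via $\gamma$ (induced by the projection $S[g+k]\to S[1]$) into the stack $\M_{g+k-\delta}(S[1]^{\mathrm{exp}},\mathcal L)\to\mathbb A^1$, whose central fiber decomposes as a union of products $\M_{h_0}(S/J,L_{g_0})\times\M_{g+k-\delta-h_0}(R/J_0,(g+k-g_0)J_0+f_{p_{10}(g+k)})$; each factor appears with multiplicity one by \cite[Prop. 4.13]{Li1} and defines a Cartier divisor by \cite[\S 3.1]{Li2}. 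Pulling these back by $\gamma$, taking the union over $g_0\leq g'$, restricting to the smoothable locus and pulling back under the seminormalization (a universal homeomorphism) exhibits $\alpha^{-1}(|L_{g'}|)$ as a Cartier divisor for every $g'$ --- which is exactly your multiplicity-one statement, now with a proof. From there the paper runs the same induction you describe: it applies the universal property of blow-ups \cite[70.17]{St} to factor through $Bl_{|L_{g-\delta}|}|L_{g+k}|$, writes $\alpha^{-1}(|L_{g-\delta+1}|)=\alpha_1^{-1}(|L_{g-\delta+1}|^t)+\alpha^{-1}(|L_{g-\delta}|)$ to see that the strict-transform center again pulls back to a Cartier divisor, iterates, and identifies the image of $\tilde\alpha$ with $\widetilde{V_\delta(S,L_{g+k})}$ by irreducibility, as in your final paragraph. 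To repair your write-up, replace the unproven monomial computation with this appeal to \cite{Li1,Li2} (or supply an honest proof of the transversality claim, which amounts to reproving Li's multiplicity-one result in this setting).
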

\begin{proof}
We exploit the family $S[1]\to \mathbb A^1$, whose general fibers $S_t$ are isomorphic to $S$ and whose central fiber is $S[1]_0=S\cup R$. Denote by $\mathcal L$ the equivalence class (up to twisting by multiples of a component of the central fiber) of line bundles on $S[1]$ restricting to $L_{g+k}$ on a general fiber $S_t$ and to a line bundle of the form $(L_{g_0}, (g+k-g_0)J_0+f_{p_{10}(g+k)})$ with $0\leq g_0\leq g+k-1$ on $S[1]_0$; since all pairs $(L_{g_0}, (g+k-g_0)J_0+f_{p_{10}(g+k)})$ are limits of the same family of line bundles on $S_t$, they are considered equivalent and we denote by $L:=[(L_{g_0}, (g+k-g_0)J_0+f_{p_{10}(g+k)})]$ their class. Let $$\M_{g+k-\delta}(S[1]^\mathrm{exp},\mathcal L)\to \mathbb A^1$$ be the moduli stack of connected stable maps to expanded degenerations of $\chi$ constructed in \cite{Li1,Li2}. Over a point $t\in \mathbb A^1\setminus\{ 0\}$, the fiber of $\chi_\delta$ is simply the moduli stack $\M_{g+k-\delta}(S,L_{g+k})$ of ordinary stable maps on $S$, while the fiber over $0$ is the stack $\M_{g-\delta}(S[1]_0^\mathrm{exp},L)$ parametrizing stable maps (in the sense of Jun Li) to some expanded degeneration of $S[1]_0$, or equivalently, to some expanded degenerations $S[n]_0$ of $S$ with $n\geq 1$. By \cite[\S 3]{Li1}, $\M_{g-\delta}(S[1]_0^\mathrm{exp},L)$  admits the following decomposition:
\begin{equation}\label{arbitro}
\M_{g-\delta}(S[1]_0^\mathrm{exp},L)=\bigcup_{(g_0,h_0)\in I}\M_{h_0}(S/J,L_{g_0})\times  \M_{g+k-\delta-h_0}(R/J_0,(g+k-g_0)J_0+f_{p_{10}(g+k)}),
\end{equation}
where $I$ is the set of indices $$I:=\{(g_0,h_0)\in\mathbb Z^2|\,\, 0\leq g_0< g+k,\,\max\{0,g_0-\delta\}\leq h_0\leq \min\{g_0,g+k-\delta-1\}\},$$
and $\M_{h_0}(S/J,L_{g_0})$ and $\M_{g+k-\delta-h_0}(R/J_0,(g+k-g_0)J_0+f_{p_{10}(g+k)})$ are stacks of stable relative maps to expanded degenerations of $(S,J)$ and $(R,J_0)$, respectively. Each factor in the decomposition \eqref{arbitro} appears with multiplicity $1$ by \cite[Prop. 4.13]{Li1} and,  by \cite[\S 3.1]{Li2}, defines a Cartier divisor in $\M_{g+k-\delta}(S[1]^\mathrm{exp},\mathcal L)$ that we denote by $\mathcal{D}_{h_0,g_0}'$. 

Recalling that points of $\M_{g+k-\delta}(S/J,L_{g+k})$ parametrize maps to expanded degenerations $S[n]_0$  of $S$ with $0\leq n\leq g+k$, there is a natural map
$$\gamma:\M_{g+k-\delta}(S/J,L_{g+k})\to \M_{g+k-\delta}(S[1]^\mathrm{exp},\mathcal L)$$
induced by the projection $S[g+k]\to S[1]$. Pulling back $\mathcal{D}_{h_0,g_0}'$ via $\gamma$, we obtain  a Cartier divisor on $\M_{g+k-\delta}(S/J,L_{g+k})$ that we denote by $\mathcal{D}_{h_0,g_0}$. In particular, for any fixed $0\leq g'< g+k$ the union
\begin{equation}\label{venduto}
\bigcup_{\substack{0\leq g_0\leq g'\\\max\{0,g_0-\delta\}\leq h_0\leq \min\{g_0,g+k-\delta-1\}}}\mathcal{D}_{h_0,g_0}
\end{equation} is a Cartier divisor on $\M_{g+k-\delta}(S/J,L_{g+k})$. On the other hand, $\alpha^{-1}(|L_{g'}|)$ coincides with the pullback under the semi-normalization map 
\begin{equation}\label{semi}\nu: \tilde\M_{g+k-\delta}(S/J,L_{g+k})^{sm}\to \M_{g+k-\delta}(S/J,L_{g+k})^{sm}\end{equation} (which is a universal homeomorphism) of the restriction of \eqref{venduto} to the smoothable locus; this implies that $\alpha^{-1}(|L_{g'}|)$ is a Cartier divisor on $\tilde\M_{g+k-\delta}(S/J,L_{g+k})^{sm}$. The case $g'=g-\delta$ yields, by the universal property of blow-ups \cite[70.17]{St}, a factorization of $\alpha$ through  the blow-up of $|L_{g+k}|$ along $|L_{g-\delta}|$:
\begin{equation*}
\xymatrix{
\tilde\M_{g+k-\delta}(S/J,L_{g+k})^{sm} \ar@/_2pc/[rr]_{\alpha}\ar[r]^-{\alpha_1}& Bl_{|L_{g-\delta}|}|L_{g+k}|\ar[r]^-{b_1}& |L_{g+k}|.}
\end{equation*}
Denoting by $|L_{g-\delta+1}|^t$ the strict transform of $|L_{g-\delta+1}|$ in $Bl_{|L_{g-\delta}|}|L_{g+k}|$, we have
$$
\alpha^{-1}(|L_{g-\delta+1}|)=\alpha_1^{-1}(|L_{g-\delta+1}|^t+E_{g-\delta})=\alpha_1^{-1}(|L_{g-\delta+1}|^t)+\alpha^{-1}(|L_{g-\delta}|),
$$
and thus conclude that $\alpha_1^{-1}(|L_{g-\delta+1}|^t)$ is a Cartier divisor as it is difference of two Cartier divisors. Therefore, $\alpha_1$ factors through the blow-up of $Bl_{|L_{g-\delta}|}|L_{g+k}|$ along $|L_{g-\delta+1}|^t$. By the same argument, after $g+\delta-1$ steps one obtains a factorization of $\alpha$ through a map 
$$
\tilde \alpha: \tilde\M_{g+k-\delta}(S/J,L_{g+k})^{sm}\to \widetilde{|L_{g+k}|}.$$
Since $\tilde\M_{g+k-\delta}(S/J,L_{g+k})^{sm}$ is irreducible and its image under $\alpha$ is $\overline{V_\delta(S,L_{g+k}) }$, the image of $
\tilde \alpha$ is precisely the strict transform $\widetilde{V_\delta(S,L_{g+k}) }$. 
\end{proof}

\begin{thm}\label{expanded}
For every $g\geq 1$ and every $0\leq \delta\leq g-1$ there exists a closed substack $\M_{g-\delta}(S/J,L_g)^\dagger$ of $ \M_{g-\delta}(S/J,L_g)$ that is connected and contains $ \M_{g-\delta}(S/J,L_g)^{sm}$. The image in $S$ of a stable map parametrized by a point $[f]\in\M_{g-\delta}(S/J,L_g)^\dagger$ always lies in the closure of the Severi variety $\overline{V_{\delta}(S,L_g)}$. If moreover $[f]\in\M_{g-\delta}(S/J,L_g)^\dagger$ and its image is a nodal curve, then $[f]\in\M_{g-\delta}(S/J,L_g)^{sm}$. 
\end{thm}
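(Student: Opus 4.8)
The plan is to lift the fibration $\psi$ of \eqref{psi}, together with the section constructed in the proof of Theorem \ref{main}, from the blown-up linear system to the moduli stack of stable relative maps, and then to extract $\M_{g-\delta}(S/J,L_g)^\dagger$ from a single connected fibre. Throughout I fix $k\gg 0$ as in Theorem \ref{main}, so that $\overline{V_\delta(S,L_{g+k})}$ is irreducible by Proposition \ref{irreducible} and hence $\tilde\M_{g+k-\delta}(S/J,L_{g+k})^{sm}$ is irreducible as well. Composing the map $\tilde\alpha$ of Lemma \ref{federico} with the morphism $\psi$ of \eqref{psi}, I obtain a proper surjection
\begin{equation*}
\Phi:=\psi\circ\tilde\alpha:\tilde\M_{g+k-\delta}(S/J,L_{g+k})^{sm}\longrightarrow \widetilde{\mathbb P^{k}},
\end{equation*}
which is the desired lift of $\psi$; surjectivity follows from that of $\psi$ (Lemma \ref{blow}) together with the fact that $\tilde\alpha$ dominates $\widetilde{V_\delta(S,L_{g+k})}$.

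The heart of the argument is to lift the section of $\psi$ as well. Recall that this section was realised in Theorem \ref{main} as the fibre $\pi_{k,\delta}^{-1}(X)\cong\widetilde{\mathbb P^{k}}$ over a curve $X=(k+1)J+C$ with $C\in V_\delta(S,L_{g-1})$, shown there to lie inside $\widetilde{V_\delta(S,L_{g+k})}$. I would produce a morphism $\sigma:\widetilde{\mathbb P^{k}}\to\tilde\M_{g+k-\delta}(S/J,L_{g+k})^{sm}$ so that $\tilde\alpha\circ\sigma$ identifies $\widetilde{\mathbb P^{k}}$ with $\pi_{k,\delta}^{-1}(X)$, whence $\Phi\circ\sigma=\mathrm{id}$, by assigning to each point $\eta\in\pi_{k,\delta}^{-1}(X)$ the stable relative map with image $X$ whose domain is the normalization of $C$ glued to a chain of rigid elliptic covers of $J$ realising $(k+1)J$ on the expansion $S[n]_0$ prescribed by $\eta$. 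Here the combinatorics of the iterated blow-up defining $\widetilde{\mathbb P^{k}}$ (equivalently, of $\pi_{k,\delta}^{-1}(X)$) matches the combinatorics of the expansions $S[n]_0$ and of the genus distribution among the ruled components, and the genus bookkeeping $g+k-\delta=(g-1-\delta)+(k+1)$ is exactly what forces the elliptic part to be rigid, as already exploited for the map $f_J$ in the proof of Proposition \ref{cz}. Granting $\Phi$ a section, with $\widetilde{\mathbb P^{k}}$ smooth and the source irreducible, the Stein factorization argument used for $\psi$ (cf. \cite[Lem. 3]{KoL} and \cite[proof of Thm. 2.1]{FL}) shows that every fibre of $\Phi$ is connected; in particular $\Phi^{-1}(\xi)$ is connected, where $\xi\in\widetilde{\mathbb P^{k}}$ is the point of Lemma \ref{blow} satisfying $\psi^{-1}(\xi)\simeq\widetilde{V_\delta(S,L_g)}$.

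It then remains to pass from $\Phi^{-1}(\xi)=\tilde\alpha^{-1}(\widetilde{V_\delta(S,L_g)})$ back to $\M_{g-\delta}(S/J,L_g)$. A point of $\Phi^{-1}(\xi)$ is a smoothable genus $g+k-\delta$ relative map whose image on $S$ is $\overline C+kJ$ with $\overline C\in\overline{V_\delta(S,L_g)}$; using Li's decomposition \eqref{arbitro} one checks that the component mapping to $S$ carries image $\overline C\in|L_g|$ and, since the configuration is smoothable, the remaining components cover $kJ$ by a rigid chain of elliptic covers contributing exactly genus $k$, so that the $S$-part has genus $g-\delta$. Forgetting this rigid ruled part defines a morphism $\Phi^{-1}(\xi)\to\M_{g-\delta}(S/J,L_g)$, and I set $\M_{g-\delta}(S/J,L_g)^\dagger$ to be its image, a closed connected substack because $\Phi^{-1}(\xi)$ is connected and the map is proper. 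Property (b) is then built into the construction, since every image curve is the corresponding $\overline C\in\overline{V_\delta(S,L_g)}$. For the containment $\M_{g-\delta}(S/J,L_g)^{sm}\subset\M_{g-\delta}(S/J,L_g)^\dagger$ of (a), I attach to a smoothable genus $g-\delta$ map with image $\overline C$ the rigid elliptic cover of $kJ$, obtaining a point of $\Phi^{-1}(\xi)$ that forgets back to it. For (c), if the image $\overline C$ is nodal then $\overline C\in V_\delta(S,L_g)$ has geometric genus $g-\delta$, so any genus $g-\delta$ stable map onto it must be its normalization, which is smoothable; hence such a point already lies in $\M_{g-\delta}(S/J,L_g)^{sm}$.

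I expect the main obstacle to be the construction of the section $\sigma$: one must match, in families, the normal-bundle directions governing the iterated blow-up $\widetilde{\mathbb P^{k}}$ with the discrete data of the expansions $S[n]_0$ and of the genus splitting among ruled components, and verify that the resulting assignment is an honest morphism into the smoothable locus rather than a merely pointwise recipe. The rigidity of the elliptic covers of $J$ is what makes this feasible, but checking that the section is regular across all strata of $\widetilde{\mathbb P^{k}}$, and—secondarily—that forgetting the ruled part is a morphism of stacks with closed image for which the genus of the $S$-part is uniformly $g-\delta$, is the delicate point.
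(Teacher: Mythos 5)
Your overall architecture is exactly the paper's: compose $\tilde\alpha$ with $\psi$, lift the section of $\psi$ to the stack, conclude connectedness of the fibers of $\psi\circ\tilde\alpha$ by Stein factorization and Zariski's Main Theorem, and extract $\M_{g-\delta}(S/J,L_g)^{\dagger}$ from the fiber over $\xi$ by forgetting a chain of copies of $J$ (your forgetful map is, up to direction, the paper's injective morphism $r$, and over $\xi$ the chain of $k$ genus-one components $[f_i]\in \M_{1}(R,J_0+f_{p_{10}(g+i)})/\mathbb C^*$ is indeed uniquely determined, so that part of your picture is correct). However, the one step you defer --- the construction of the section $\sigma$ --- is the heart of the proof, and the mechanism you sketch for it would fail. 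In $\M_{g+k-\delta}(S/J,L_{g+k})$ no component of the domain may map into $J\subset S$, since $J$ is the relative divisor (resp.\ the gluing locus of any expansion); hence over a point of $\pi_{k,\delta}^{-1}(X)$ the part of the map accounting for $(k+1)J$ must live on the ruled components, and \emph{generically} it is a single irreducible curve of genus $k+1$ moving in the linear system $|(k+1)J_0+f_{p_{10}(g)}|\simeq \PP^{k+1}$ on one copy of $R$. This family has $k$ continuous moduli after quotienting by $\mathbb C^*$, and it is precisely this continuous variation that sweeps out the $k$-dimensional base $\widetilde{\PP^k}$. A "chain of rigid elliptic covers of $J$" has only discrete moduli, so your pointwise recipe can hit at most finitely many points in each blow-up stratum and cannot define a section over the connected $k$-dimensional $\widetilde{\PP^k}$; the rigidity of $f_J$ exploited in Proposition \ref{cz} concerns stable maps to $S$ itself (via the morphism \eqref{semi}) and does not transfer to the expanded setting, so the genus bookkeeping $g+k-\delta=(g-1-\delta)+(k+1)$ forces nothing of the sort.

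The paper's actual construction sidesteps any stratum-by-stratum matching: it sets $\M:=\M_{k+1}(R/J_0,(k+1)J_0+f_{p_{10}(g)})^{sm}$, checks by dimension counts (Propositions \ref{dimension} and \ref{ruled}) that gluing the normalization of $C$ to members of $\M$ yields \emph{smoothable} maps, so that $\M/\mathbb C^*\subset \alpha^{-1}(X)$; the open substack $\M^{\circ}$ of maps with irreducible domain is identified with the complement $U$ of two hyperplanes in $\PP^{k+1}$, the restriction of $\tilde\alpha$ to $\M^{\circ}/\mathbb C^*\simeq U/\mathbb C^*$ is an isomorphism onto its image, and then $\tilde\alpha|_{\M/\mathbb C^*}:\M/\mathbb C^*\to \pi_{k,\delta}^{-1}(X)\simeq\widetilde{\PP^k}$ is birational, hence an isomorphism by Zariski's Main Theorem. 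Two further points in your write-up need repair as well: the containment $\M_{g-\delta}(S/J,L_g)^{sm}\subset \M_{g-\delta}(S/J,L_g)^{\dagger}$ requires verifying that the glued map $f_0\cup f_1\cup\cdots\cup f_k$ lies in $\tilde\M_{g+k-\delta}(S/J,L_{g+k})^{sm}$ (the paper does this via unramifiedness plus a dimension count), not merely writing it down; and in your part (c) the assertion that any genus $g-\delta$ stable map onto a nodal $\overline C$ "must be its normalization" needs an argument excluding contracted components --- the paper argues contrapositively that a point of $\M_{g-\delta}(S/J,L_g)^{\dagger}\setminus \M_{g-\delta}(S/J,L_g)^{sm}$ necessarily contracts a component, which by \cite{Va} forces a singularity of the image worse than a node.
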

\begin{proof}
We use the same notation as in the proof of Lemma \ref{federico}. We compose $\tilde\alpha$ with the morphism $\psi:\widetilde{V_\delta(S,L_{g+k}) }\longrightarrow \widetilde{\mathbb P^k}$ introduced in \eqref{psi}. As proved in Lemma \ref{blow} and Theorem \ref{main}, $\widetilde{V_\delta(S,L_{g}) }$ is isomorphic to the intersection $$\widetilde{V_\delta(S,L_{g+k}) }\cap \widetilde H_1\cap \left(\bigcap_{i=2}^k \widetilde{H_i}+E_{g+k-i} \right)\simeq \widetilde{V_\delta(S,L_{g+k}) }\cap \widetilde H_1\cap \left(\bigcap_{i=2}^k E_{g+k-i} \right)$$ which is a fiber of $\psi$ and is connected because $\psi$ admits a section and $\widetilde{\PP^k}$ is smooth. 

We will now prove the existence of an injective morphism
\begin{equation}\label{mito}
r:\tilde\alpha^{-1}\left( \widetilde H_1\cap \left(\bigcap_{i=2}^k E_{g+k-i} \right)\right) \to  \M_{g-\delta}(S/J,L_{g})
\end{equation} 
whose image contains $\M_{g-\delta}(S/J,L_{g})^{sm}$ and will be denoted by $\M_{g-\delta}(S/J,L_{g})^{\dagger}$.
To do that, we denote by $\mathcal{D}_{h_0,g_0}^{sm}$ the pullback under the map $\nu$ in \eqref{semi} of the restriction of $\mathcal{D}_{h_0,g_0}$ to the smoothable locus, and observe that 
\begin{equation*}
\tilde\alpha^{-1}(E_{g-\delta})=\alpha^{-1}|L_{g-\delta}|=\bigcup_{\substack{0\leq g_0\leq g-\delta\\\max\{0,g_0-\delta\}\leq h_0\leq \min\{g_0,g+k-\delta-1\}}}\ \mathcal{D}_{h_0,g_0}^{sm},
\end{equation*} 
and then 
\begin{equation*}
\begin{split}
\tilde\alpha^{-1}(E_{g-\delta+1})=\alpha^{-1}&|L_{g-\delta+1}|-\tilde\alpha^{-1}(E_{g-\delta})=\\&=\bigcup_{\max\{0,g-2\delta+1\}\leq h_0\leq \min\{g-\delta+1,g+k-\delta-1\}} \mathcal{D}_{h_0,g-\delta+1}^{sm}.
\end{split}
\end{equation*} 

Analogously, for every $1\leq i\leq k+\delta-2$ one has
\begin{equation*}
\begin{split}
\tilde\alpha^{-1}(E_{g-\delta+i})=\alpha^{-1}&|L_{g-\delta+i}|-\sum_{j=g-\delta}^{g-\delta+i-1}\tilde\alpha^{-1}(E_{j})=\\&=\bigcup_{\max\{0,g-2\delta+i\}\leq h_0\leq \min\{g-\delta+i,g+k-\delta-1\}} \mathcal{D}_{h_0,g-\delta+i}^{sm}.
\end{split}
\end{equation*} 
Hence, a general point of any irreducible components of $\tilde\alpha^{-1}(E_{g-\delta+i})$ represents a stable morphism $f=f_0\cup f_1:C_0\cup C_1\to S[1]_0$ such that $[f_0]\in \M_{h_0}(S,L_{g-\delta+i})$ and $[f_1]\in \M_{g+k-\delta-h_0}(R,(k+\delta-i)J_0+f_{p_{10}(g+k)})$ for some $\max\{0,g-2\delta+i\}\leq h_0\leq \min\{g-\delta+i,g+k-\delta-1\}$. 
Finally, one computes that 
$$\tilde\alpha^{-1}(\widetilde H_1)=\alpha^{-1}|L_{g+k-1}|-\sum_{j=g-\delta}^{g+k-2}\tilde\alpha^{-1}(E_{j})\simeq \mathcal{D}_{g+k-\delta-1,g+k-1},
$$
so that a general point of any irreducible components of $\tilde\alpha^{-1}(\widetilde H_1)$ represents a stable map $$f=f_0\cup f_1:C_0\cup J\to S[1]_0$$ with $[f_0]\in \M_{g+k-\delta-1}(S,L_{g+k-1})$ and $[f_1]\in \M_1(R,J_0+f_{p_{10}(g+k)})$. A general point in the intersection $\tilde\alpha^{-1}(\widetilde H_1\cap E_{g+k-2})$ then represents a map $f$ with a chain of at least $2$ copies of $J$, that is, $f$ can be written as $$f=f_0\cup f_1\cup f_2:C_0\cup J\cup J \to S\cup R\cup R=S[2]_0$$ with $f(C_0)\in |L_{g+k-2}|$, $f(C_1)\in |J_0+f_{p_{10}(g+k-1)}|$ and $f(C_2)\in |J_0+f_{p_{10}(g+k)}|$. By further intersecting with $\tilde\alpha^{-1}(E_{g+k-3})$, we select maps with a chain of at least $3$ copies of $J$, and so on. 

In conclusion, every point of $\tilde\M_{g+k-\delta}(S/J,L_{g+k})^{sm}$ lying in the intersection $\tilde\alpha^{-1}\left( \widetilde H_1\cap \left(\bigcap_{i=2}^k E_{g+k-i} \right)  \right)$ parametrizes a stable map $f$ with a chain of $k$ copies of $J$, that is, $f$ can be written as $$f= f_0\cup f_1\cup\ldots\cup f_k:C_0\cup J\cup\ldots J\to S[m]_0\cup R\cup\ldots\cup R=S[m+k]_0$$ for some $0\leq m\leq g$ so that $[f_0]\in \M_{g-\delta}(S/J,L_g)$ and $[f_i]\in \M_{1}(R,J_0+f_{p_{10}(g+i)})/\mathbb C^*$ for $1\leq i\leq k$. In particular, for $1\leq i\leq k$ the class of $f_i$ is uniquely determined and this proves the existence of an injective morphism $r$  as in \eqref{mito}. Its image, denoted by 
$ \M_{g-\delta}(S/J,L_g)^{\dagger}$, is a closed substack of $ \M_{g-\delta}(S/J,L_g)$. Let $f_0:C_0\to S$ define a general point of $ \M_{g-\delta}(S,L_g)^{sm}$ and take $[f_i]\in \M_{1}(R,J_0+f_{p_{10}(g+i)})/\mathbb C^*$ for $1\leq i\leq k$; the map $f= f_0\cup f_1\cup\ldots\cup f_k$  is unramified and thus, by dimensional reasons using Propositions \ref{dimension} and \ref{ruled}, one may check that $[f]\in\tilde\M_{g+k-\delta}(S/J,L_{g+k})^{sm}$. This proves that $ \M_{g-\delta}(S/J,L_g)^{\dagger}$ contains $ \M_{g-\delta}(S/J,L_g)^{sm}$.

By construction the stack $ \M_{g-\delta}(S/J,L_g)^{\dagger}$ is homeomorphic to a fiber of $\psi \circ\tilde \alpha$ and thus, in order to prove that it is connected, it only remains to show that the section of $\psi$ constructed in the proof of Theorem \ref{main} lifts to a section of $\psi \circ\tilde \alpha$. 

Take $X=(k+1)J+ C\in |L_{g-1}|\subset |L_{g+k}|$ with $ C\in V_\delta(S,L_{g-1})$ as in the proof of Theorem \ref{main}. By dimensional reasons using Propositions \ref{dimension} and \ref{ruled}, all stable maps $f=f_0\cup f_1$ with $f_0:\widetilde C\to C\subset S$ the normalization of $C$ and $[f_1]\in  \M_{k+1}(R/J_0,(k+1)J_0+f_{p_{10}(g)})^{sm}/\mathbb C^*$ are smoothable and thus contained in the fiber $\alpha^{-1}(X)$. Setting $\M:=\M_{k+1}(R/J_0,(k+1)J_0+f_{p_{10}(g)})^{sm}$, this provides an inclusion $$\M/\mathbb C^*\subset\tilde\alpha^{-1}(\pi_{k,\delta}^{-1}(X))=\alpha^{-1}(X).$$ The open substack $\M^\circ\subset \M$ parametrizing maps with irreducible domain is isomorphic to the open subset $U\subset|(k+1)J_0+f_{p_{10}(g)}|\simeq \PP^{k+1}$ parametrizing irreducible curves. It is trivial to check that $U$ is the complement of two hyperplanes and the restriction to $\M^\circ/\mathbb C^* \simeq U/\mathbb C^*$ of $$\tilde \alpha|_{\M/\mathbb C^*} : \M/\mathbb C^* \to \pi_{k,\delta}^{-1}(X)\simeq \widetilde{\PP^k}$$ is an isomorphism onto its image. In particular, $\tilde \alpha|_{\M/\mathbb C^*}$ is birational and thus an isomorphism by Zariski's Main Theorem. Since $\pi_{k,\delta}^{-1}(X)$ is a section of $\psi$, we get that $\M/\mathbb C^* $ is a section of $\psi\circ \tilde\alpha$ and we can conclude that all fibers of $\psi\circ \tilde\alpha$ are connected by considering the Stein factorization of $\psi\circ \tilde\alpha$ and again applying Zariski's Main Theorem.

As concerns the last part of the statement, let $$f= f_0\cup f_1\cup\ldots\cup f_n:C_0\cup C_1\cup\ldots C_n\to S[n]_0$$ define a point of $ \M_{g-\delta}(S/J,L_g)^{\dagger}\setminus  \M_{g-\delta}(S/J,L_g)^{sm}$. Again by dimensional reasons using Propositions \ref{dimension} and \ref{ruled}, for some $i$ the map $f_i$ contracts a component $C_i'$ of $C_i$. Since $ \M_{g-\delta}(S/J,L_g)^{\dagger}\subset \M_{g+k-\delta}(S/J,L_{g+k})^{sm}$, the image curve $f(C_i)$ has a singularity at the point $f_i(C_i')$ that is worse than a node by, e.g., \cite{Va}. 
\end{proof}
We will now define a moduli stack $ \V_{\delta}(S/J,L_g)$ of stable relative $\delta$-nodal curves to expanded degenerations of $(S,J)$ with multiplicity $1$ along  the relative divisor $J$. We apply Li and Wu's construction \cite{LW} (cf. also \cite{Li3} for a nice survey) of stacks of relative ideal sheaves with fixed Hilbert polynomial to obtain a moduli stack $|L_g|^\mathrm{exp}$, whose closed points parametrize equivalence classes of curves $X=X_0\cup X_1\cup\ldots\cup X_n$ with finite automorphism group living in some expanded degeneration $S[n]_0$ of $S$, such that $X$ has no component contained in the singular locus of $S[n]_0$ or in the divisor $J_\infty$ on the $n$-th copy of $R$, and $X_0\in |L_{g_0}|$ while $X_i\in |N_i(\underline g)|$ for $1\leq i\leq n$. Two such curves $X$ and $X'$ define the same point of $|L_g|^\mathrm{exp}$ if they live in the same $S[n]_0$ and lie in the same orbit under the natural action of $(\CC^*)^n$. The stack $|L_g|^\mathrm{exp}$ is Deligne-Mumford, proper, separated and of finite type. 

For every $0\leq \delta\leq g$, we define $\overline{\V_{\delta}(S/J, L_g)}$ to be the closure in $ |L_g|^\mathrm{exp}$ of the Severi variety $V_{\delta}(S, L_g)$. Denoting by $\tilde\M_{g-\delta}(S/J,L_g)^{\mathrm sm}$ the semi-normalization of $\M_{g-\delta}(S/J,L_g)^{\mathrm sm}$, the stack $\overline{\V_{\delta}(S/J, L_g)}$ can be alternatively described as the image of the natural map $$\tilde \mu:\tilde\M_{g-\delta}(S/J,L_g)^{\mathrm sm}\to | L_g|^\mathrm{exp}$$ sending a stable map to its image. 
The points of $\overline{\V_{\delta}(S/J, L_g)}$ thus parametrize curves $X=X_0\cup X_1\cup\ldots\cup X_n$ in $|L_g|^\mathrm{exp}$ whose normalization outside of the nodes $n_i:=X_i\cap X_{i+1}$ is a nodal connected curve of arithmetic genus $\leq g-\delta$. 

Let $\M_{g-\delta}(S/J,L_g)^{\dagger}$ be as in the statement of Theorem \ref{expanded} and denote by $\tilde\M_{g-\delta}(S/J,L_g)^{\dagger}$ its semi-normalization, which also admits a map $$ \mu^\dagger:\tilde\M_{g-\delta}(S/J,L_g)^{\dagger}\to | L_g|^\mathrm{exp}$$ sending a stable map to its image. We may compose $\mu^\dagger$ with the natural map $|L_g|^\mathrm{exp}\to |L_g|$. By Theorem \ref{expanded}, the image of this composition is $\overline{V_{\delta}(S, L_g)}$ and this implies that the image of $ \mu^\dagger$ is again $\overline{\V_\delta(S/J,L_g)}$.

\begin{prop}\label{fabbrica}
Fix $g\geq 2$ and $0\leq\delta\leq g-1$. Then the following hold.
\begin{itemize}
\item[(i)] The stack $\overline{\V_{\delta}(S/J, L_g)}$ is connected and the same holds true for the relative normalization $\overline{\V_{\delta}(S/J, L_g)}^n$ of  $\overline{\V_{\delta}(S/J, L_g)}$ along  $\overline{\V_{\delta+1}(S/J, L_g)}$.
\item[(ii)] Let $\V$ and $\W$ be two intersecting irreducible component of $\overline{\V_{\delta}(S/J, L_g)}^n$ and let $\mathcal Z$ be a component of their intersection; then $\mathcal Z$ is generically reduced.
\end{itemize}
\end{prop}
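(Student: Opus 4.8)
The plan is to deduce both statements from Theorem \ref{expanded} together with the analysis of component intersections carried out in Propositions \ref{cohen} and \ref{ruled}. The common thread is that the relevant stacks are realized as images of the connected stack $\tilde\M_{g-\delta}(S/J,L_g)^{\dagger}$ under maps induced by $\mu^\dagger$, so that connectedness is inherited, while generic reducedness is a codimension-one, \'etale-local property that I would transport to the factors of the expansion decomposition recalled before Proposition \ref{ruled}.

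For part (i), the connectedness of $\overline{\V_{\delta}(S/J, L_g)}$ is immediate: by Theorem \ref{expanded} the stack $\M_{g-\delta}(S/J,L_g)^{\dagger}$ is connected, hence so is its semi-normalization $\tilde\M_{g-\delta}(S/J,L_g)^{\dagger}$ (the semi-normalization being a universal homeomorphism), and $\overline{\V_{\delta}(S/J, L_g)}$ is its image under $\mu^\dagger$. For the relative normalization I would first observe that, over a general point of $\overline{\V_{\delta+1}(S/J, L_g)}$ parametrizing a curve with exactly $\delta+1$ nodes, the map $\tilde\mu$ from the smoothable locus has exactly $\delta+1$ preimages, one for each node left unsmoothed, and these correspond bijectively to the $\delta+1$ local branches of $\overline{\V_{\delta}(S/J, L_g)}$ along $\overline{\V_{\delta+1}(S/J, L_g)}$. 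Thus stable maps separate the branches of $\overline{\V_{\delta}(S/J, L_g)}$ along $\overline{\V_{\delta+1}(S/J, L_g)}$, and the universal property of the relative normalization (applicable since $\tilde\M_{g-\delta}(S/J,L_g)^{\dagger}$ is semi-normal) lets $\mu^\dagger$ factor through a map $\tilde\mu^\dagger:\tilde\M_{g-\delta}(S/J,L_g)^{\dagger}\to \overline{\V_{\delta}(S/J, L_g)}^n$. This map is surjective, since its image is closed and contains the dense locus of $\delta$-nodal curves over which both $\tilde\mu^\dagger$ and the relative normalization are isomorphisms; connectedness of the source then forces $\overline{\V_{\delta}(S/J, L_g)}^n$ to be connected.

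For part (ii), I would use that the relative normalization $n:\overline{\V_{\delta}(S/J, L_g)}^n\to \overline{\V_{\delta}(S/J, L_g)}$ is finite and restricts to an isomorphism over the complement of $\overline{\V_{\delta+1}(S/J, L_g)}$. Since the branches along $\overline{\V_{\delta+1}(S/J, L_g)}$ have been separated, no component of $\V\cap\W$ dominates a component of $\overline{\V_{\delta+1}(S/J, L_g)}$; as $\mathcal Z$ has the expected dimension $g-\delta-1$ by the codimension-one statements of Propositions \ref{cohen} and \ref{ruled}, its general point lies off $\overline{\V_{\delta+1}(S/J, L_g)}$, so $n$ identifies $\mathcal Z$ with a component of the intersection of the two corresponding components of $\overline{\V_{\delta}(S/J, L_g)}$. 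A general point of $\mathcal Z$ parametrizes a curve $X=X_0\cup\cdots\cup X_m$ in an expansion $S[m]_0$, and \'etale-locally $\overline{\V_{\delta}(S/J, L_g)}$ decomposes as a product of equigeneric loci for the factor $X_0$ on $S$ and the factors $X_i$ on $R$, modulo the action of $(\CC^*)^m$. The two components $\V$ and $\W$ differ in exactly one factor, and that factor is a component of the intersection of two components of $\overline{V^{h}(S,L_g)}$ (when $m=0$) or of $\overline{V^{h_i}(R,N_i(\underline g))}$ (for a bubbled factor), whose general member involves neither a curve of $|L_{g-1}|$ nor the sections $J_0,J_\infty$. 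Generic reducedness of that distinguished factor is precisely the content of Proposition \ref{cohen} or of Proposition \ref{ruled}(ii); since generic reducedness is preserved under products with reduced factors and under the free part of the $(\CC^*)^m$-action, $\mathcal Z$ is generically reduced.

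The main obstacle lies in the two bookkeeping reductions. In part (i) it is the rigorous extension of the lift $\tilde\mu^\dagger$ across $\tilde\M_{g-\delta}(S/J,L_g)^{\dagger}\setminus\tilde\M_{g-\delta}(S/J,L_g)^{sm}$, whose points parametrize maps with non-nodal image lying in the singular locus of $\overline{\V_{\delta}(S/J, L_g)}$: one must check that the branch-separation persists in the limit so that the universal property of the relative normalization still applies. In part (ii) the delicate point is the \'etale-local product description of $\overline{\V_{\delta}(S/J, L_g)}^n$ at a general point of $\mathcal Z$ lying in a nontrivial expansion, together with the verification that the distinguished factor avoids $J_0$ and $J_\infty$, so that Proposition \ref{ruled}(ii) is genuinely applicable; once the reduction to a single factor is in place, the conclusion is formal.
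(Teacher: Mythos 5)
Your part (i) follows the paper's argument, and the obstacle you flag there is closable with a tool you already have in hand but did not deploy: the last clause of Theorem \ref{expanded} states that any $[f]\in\M_{g-\delta}(S/J,L_g)^{\dagger}$ whose image is nodal lies in $\M_{g-\delta}(S/J,L_g)^{sm}$. Since a general point $[X]$ of $\overline{\V_{\delta+1}(S/J,L_g)}$ parametrizes an irreducible curve with exactly $\delta+1$ nodes, the entire fiber $(\mu^\dagger)^{-1}([X])$ is therefore contained in the smoothable locus and consists precisely of the $\delta+1$ partial normalizations of $X$; hence $\tilde\M_{g-\delta}(S/J,L_g)^{\dagger}$ is generically smooth along $(\mu^\dagger)^{-1}(\overline{\V_{\delta+1}(S/J,L_g)})$, and the factorization of $\mu^\dagger$ through $\overline{\V_{\delta}(S/J,L_g)}^n$ only requires this generic statement along the preimage of the divisor --- no separate analysis of the limit behavior over $\tilde\M^{\dagger}\setminus\tilde\M^{sm}$ is needed. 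This is exactly how the paper constructs $\mu^n$, after which connectedness of both spaces is immediate from Theorem \ref{expanded}.

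In part (ii) there is a genuine gap: you assert that $\mathcal Z$ has the expected dimension $g-\delta-1$ ``by the codimension-one statements of Propositions \ref{cohen} and \ref{ruled}'' and that $\V$ and $\W$ differ in exactly one factor. Neither claim is justified, and neither is true in general: Propositions \ref{cohen} and \ref{ruled}(ii) control intersections of components of the Severi/equigeneric loci on $S$ and on $R$ \emph{separately}, not of components of $\overline{\V_{\delta}(S/J,L_g)}^n$, whose intersections may sit in a boundary stratum $\mathcal V(\underline h,\underline g)$ of codimension $n\geq 1$ and may moreover differ in several factors there, so that $\mathrm{codim}\,\mathcal Z$ can exceed $1$. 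Note the statement itself only claims generic reducedness, not pure codimension one. The paper's proof avoids your dimension count entirely: (a) $\mathcal Z$ is not contained in the preimage of $\overline{\V_{\delta+1}(S/J,L_g)}$ because the relative normalization is unibranch along it (normal singularities are unibranched, cf. \cite{Ko2}); (b) if the general point of $\mathcal Z$ parametrizes an irreducible curve, the map to $|L_g|$ is a local isomorphism and Proposition \ref{cohen} applies; (c) otherwise $\mathcal Z\subset\mathcal V(\underline h,\underline g)$, identified with an open substack of $\overline{V^{h_0}(S,L_{g_0})}\times\prod_{i\geq 1}\left[\overline{V^{h_i}(R,N_i(\underline g))}/\CC^*\right]$, and one distinguishes $\mathrm{codim}\,\mathcal Z=n$ (then $\mathcal Z$ is a component of the stratum itself, generically reduced by Propositions \ref{dimension} and \ref{ruled}(i)) from $\mathrm{codim}\,\mathcal Z>n$ (then $\mathcal Z$ is an open subset of a component of the product of the pairwise factor intersections $(\V_0'\cap\W_0')\times\prod_i\left[\V_i'\cap\W_i'/\CC^*\right]$, each generically reduced by Propositions \ref{cohen} and \ref{ruled}(ii), a property preserved under products and categorical quotients). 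Your ``single differing factor'' reduction must be replaced by this product argument; on the other hand, the delicacy you flag about the factors avoiding $J_0$ and $J_\infty$ dissolves, since points of $|L_g|^{\mathrm{exp}}$ by definition have no components in the singular locus of the expansion or in the relative divisor.
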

\begin{proof}
The stack $\overline{\V_{\delta}(S/J, L_g)}$ coincides with the image of $ \mu^\dagger$ and is thus connected. We consider the divisor $\overline{\V_{\delta+1}(S/J, L_g)}\subset \overline{\V_{\delta}(S/J, L_g)}$. Since a general point $[X]\in\overline{\V_{\delta+1}(S/J, L_g)}$ parametrizes a nodal irreducible curve $X$ with $\delta+1$ nodes, then $ (\mu^\dagger)^{-1}([X])\subset  \tilde\M_{g-\delta}(S/J,L_g)^{sm}$ by the last statement in Theorem \ref{expanded}. Hence, $(\mu^\dagger)^{-1}([X])$ consists of the $\delta+1$ partial normalizations of $X$; in particular, $\tilde\M_{g-\delta}(S/J,L_g)^{\dagger}$ is generically smooth along $(\mu^{\dagger})^{-1}(\overline{\V_{\delta+1}(S/J, L_g)})$. Hence, $\mu^\dagger$ factors through a map $$\mu^n:\tilde\M_{g-\delta}(S/J,L_g)^{\dagger}\to \overline{\V_{\delta}(S/J, L_g)}^n$$ and point (i) directly follows from Theorem \ref{expanded}. 

Let now $\mathcal Z$ be as in point (ii).  Since normal singularities are unibranched (cf., e.g., \cite{Ko2}), then $\mathcal Z$ is not contained in the inverse image of $\overline{\V_{\delta+1}(S/J, L_g)}$ under the normalization map $\overline{\V_{\delta}(S/J, L_g)}^n\to \overline{\V_{\delta}(S/J, L_g)}$.

We first assume that a general point $\zeta$ of $\mathcal Z$ parametrizes an irreducible curve, so that locally around $\zeta$ the morphism 
$$\overline{\V_\delta(S/J,L_g)}\to \overline{V_{\delta}(S,L_g)}$$ (obtained as restriction of $|L_g|^\mathrm{exp}\to |L_g|$) is an isomorphism; the result thus follows from Proposition \ref{cohen}. 

We now treat the case where a general point $\zeta$ of $\mathcal Z$ parametrizes a curve $X=X_0\cup\ldots\cup X_n\in S[n]_0$ for some $n\geq 1$. More precisely, there exist $\underline h=(h_0,\ldots,h_n)\in \mathbb Z^{n+1}$ and $\underline g=(g_0,\ldots,g_n)\in\mathbb Z^{n+1}$ with $\sum_{i=0}^nh_i=g-\delta$, $\sum_{i=0}^ng_i=g$, $0\leq h_0\leq g_0$ and $1\leq h_i\leq g_i$ for $i>0$, such that $\mathcal Z$ is contained in the substack $\mathcal V({\underline h},{\underline g})$ of $\overline{\V_{\delta}(S/J, L_g)}$ parametrizing curves $$X=X_0\cup X_1\cup\ldots\cup X_n\subset S\cup R\cup\ldots\cup R=S[n]_0$$ such that $X_i$ has arithmetic genus $g_i$ and geometric genus $h_i$. The substack $\mathcal V({\underline h},{\underline g})$ lies in the intersection of $\overline{\V_{\delta}(S/J, L_g)}$ with $n$ Cartier divisors and can be identified with the open substack 
$$
U\subset \overline{V^{h_0}(S,L_{g_0})}\times \left[\overline{V^{h_1}(R,N_{1}(\underline g))}/\mathbb C^*\right]\times \cdots\times \left[\overline{V^{h_n}(R,N_{n}(\underline g))}/\mathbb C^*\right]
$$ parametrizing curves with no components in the singular locus of $S[n]_0$. Propositions \ref{dimension} and \ref{ruled} then imply that $\mathcal V({\underline h},{\underline g})$ has codimension $n$ in $\overline{\V_{\delta}(S/J, L_g)}$ and is generically reduced, thus proving that $\mathcal Z$ is generically reduced if $\mathrm{codim}\mathcal Z=n$. If instead $\mathrm{codim}\mathcal Z>n$, then $\V':=\V\cap \mathcal V({\underline h},{\underline g})$ and $\W':=\W\cap \mathcal V({\underline h},{\underline g})$ are unions of components of  $\mathcal V({\underline h},{\underline g})$ and $\mathcal Z$ is a component of the intersection $\V'\cap \W'$. Hence, $\V'$ and $\W'$ can be identified with open substacks of two products $\V'_0\times [\V_1'/\mathbb C^*]\times \cdots[\V_n'/\mathbb C^*]$ and $\W'_0\times [\W_1'/\mathbb C^*]\times \cdots[\W_n'/\mathbb C^*]$, where $\V'_0,\W_0'$ are components of $\overline{V^{h_0}(S,L_{g_0})}$, while $\V'_i,\W_i'$ are components of $\overline{V^{h_i}(R,N_{i}(\underline g))}$ for $i\geq1$. Hence, $\mathcal Z$ can be identified with an open subset of an irreducible component of $$(\V'_0\cap\W_0')\times [\V_1'\cap\W_1'/\mathbb C^*]\times \cdots[\V_n'\cap \W_n'/\mathbb C^*],$$ which is generically reduced again by Propositions \ref{cohen} and \ref{ruled} (using the fact that a categorical quotient of a generically reduced object is still generically reduced by the universal property).
\end{proof}
\color{black}

\section{Connectedness on a general $K3$ surface}\label{quattro}
In this section we will show how Theorems \ref{main} and \ref{expanded} imply analogous results on a general polarized $K3$ surface. 

Let $S$ and $S'$ be two surfaces both obtained as blow-ups of $\PP^2$ at two $9$-uples of general points, $p_1,\ldots,p_9$ and $p_1',\ldots, p_9'$, respectively. We assume that the anticanonical divisors on $S$ and $S'$ are represented by the same elliptic curve $J$ and that the relation $N_{J/S}\simeq N_{J/S'}^\vee$ holds. We glue $S$ and $S'$ along $J$ so that  $p_9'=p_{10}(g)$. Since $N_{J/S}\simeq \mathcal O_J(p_{10}(h)-p_{10}(h-1))$ for every $h\geq 0$, and the same holds for $S'$, our assumptions yield $p_{10}(h)=p_{10}(g-h)'$ for every $0\leq h\leq g$. In particular, all the pairs $(L_h,L_{g-h}')\in \Pic(S)\times \Pic(S')$ define equivalent polarizations on $Y_0:=S\cup_J S'$, as they differ only by the twist for a multiple of $(N_{J/S},N_{J/S'})$; we set $L:=[(L_h,L_{g-h}')]$.   The surface $Y_0$ is a stable $K3$ surface of type II and thus occurs as the central fiber of a flat map $$\chi:\mathcal Y\to \mathbb D$$ over a disc whose general fiber $Y_t$ is a smooth $K3$ surface of genus $g$ (cf. \cite[Prop. 2.5, Thm. 5.10]{Fr}). Furthermore, for every $0\leq h\leq g$ the family $\mathcal Y$ comes equipped with a relative line bundle $\mathcal L(h)$ restricting to the genus $g$ polarization $L_t$ on a general fiber $Y_t$ and to the polarization $(L_h,L_{g-h}')$ on $Y_0$. Since the line bundles $\mathcal L(h)$ only differ by a twist for a multiple of some component of the central fiber, they are all equivalent and we call $\mathcal L$ their class. We remark that this degeneration in the particular case where $S=S'$ and the points $p_1, \ldots, p_9\in \mathbb P^2$ are the base locus of a general pencil of plane cubics is the one used in \cite[\S 4]{MPT}.  

We denote by 
$$
\chi_\delta:\M_{g-\delta}(\mathcal Y^\mathrm{exp},\mathcal L)\to \mathbb D$$
the moduli stack of connected stable maps to expanded degenerations of $\chi$ constructed in \cite{Li1,Li2}. Over a point $t\in \mathbb D^*=\mathbb D\setminus\{ 0\}$, the fiber of $\chi_\delta$ is simply the moduli stack $\M_{g-\delta}(Y_t,L_t)$ of ordinary stable maps on $Y_t$, while the fiber over $0$ is the stack $\M_{g-\delta}(Y_0^\mathrm{exp},L)$ parametrizing stable maps (in the sense of J. Li) to some expanded target degeneration of $Y_0$ of the form:
$$
Y_0[n]_0:=S\cup_J R\cup_J\ldots\cup_JR\cup_JS'$$
for some $n\geq 0$. As already used in \cite{MPT}, a stable map to such an expanded degeneration can be split in a non-unique way into relative stable maps to $(S,J)$ and $(S',J)$. In particular, $\M_{g-\delta}(Y_0^\mathrm{exp},L)$ can be written as a non-disjoint union
\begin{equation}\label{cor}\M_{g-\delta}(Y_0^\mathrm{exp},L)=\bigcup_{\substack{g_1+g_2=g\\h_1+h_2=g-\delta}}\M_{h_1}(S/J,L_{g_1})\times \M_{h_2}(S'/J,L_{g_2}'),\end{equation}
where each factor in the above decomposition can be realized as a Cartier divisor on $\M_{g-\delta}(\mathcal Y^\mathrm{exp},\mathcal L)$. Let $$\M_{g-\delta}(\mathcal Y^\mathrm{exp},\mathcal L)^{sm}\to \mathbb D$$ be the substack of $\M_{g-\delta}(\mathcal Y^\mathrm{exp},\mathcal L)$ whose fiber over $t\in \mathbb D^*$ is the substack $\M_{g-\delta}(Y_t,L_t)^{sm}$ of $\M_{g-\delta}(Y_t,L_t)$ parametrizing smoothable stable maps, and set $\M_{g-\delta}(Y_0^\mathrm{exp},L)^{sm}:=\M_{g-\delta}(\mathcal Y^\mathrm{exp},\mathcal L)^{sm}\times_{\mathbb D}0$. 

Similarly, we denote by $$e_\delta: |\mathcal L|^{\mathrm{exp}}\to \mathbb D$$ the good degeneration of the relative linear system $|\mathcal L|^*\to\mathbb D^*$, which is a particular case of good degenerations of relative Hilbert schemes introduced and studied in \cite{LW}. The space $|\mathcal L|^{\mathrm{exp}}$ is a Deligne-Mumford stack, separated, proper over $\mathbb D$ and of finite type. A fiber of $e_\delta$ over a general $t\in \mathbb D$ is isomorphic to the linear system $|L_t|$ on $Y_t$. Points of the central fiber, that we denote by $|L|^{\mathrm{exp}}$, parametrize equivalence classes of curves $X=X_0\cup X_1\cup\ldots\cup X_n\cup X_0'$ in some expanded target degenerations $Y_0[n]_0$ of $Y_0$ with no components in the singular locus of $Y_0[n]_0$. Since Severi varieties may be defined functorially, for any fixed $0\leq \delta\leq g$ there is a $\chi$-relative Severi variety $s_\delta:\V_{\delta}(\Y, \mathcal L)^*\to \mathbb D^*$ such that the fiber over $t\in \mathbb D^*$ is the Severi variety $V_\delta(Y_t,L_t)$.  We denote by $\overline{\V_\delta(\mathcal Y^{\mathrm{exp}},\mathcal L)}$ the closure of $\V_{\delta}(\Y, \mathcal L)^*$ in $ |\mathcal L|^{\mathrm{exp}}$ and by 
$$\overline s_{\delta}:\overline{\V_\delta(\mathcal Y^{\mathrm{exp}},\mathcal L)} \to\mathbb D$$ the natural morphism. The stack $\overline{\V_\delta(\mathcal Y^{\mathrm{exp}},\mathcal L)}$ can be alternatively realized as the image of the natural map 
$$
\M_{g-\delta}(\mathcal Y^\mathrm{exp},\mathcal L)^{sm}\to |\mathcal L|^{\mathrm{exp}}.
$$
The analougue of \eqref{cor} for the central fiber $\overline{\mathcal V_\delta(Y_0^\mathrm{exp},L)}:=\overline{\V_\delta(\mathcal Y^{\mathrm{exp}},\mathcal L)}\times_{\mathbb D}0$ of $\overline s_{\delta}$ is then stated in the following result.
\begin{lem}\label{tosto}
The stack $\overline{\mathcal V_\delta(Y_0^\mathrm{exp},L)}$ decomposes in the following non-disjoint union:
\begin{equation}\label{mf}
\overline{\mathcal V_\delta(Y_0^\mathrm{exp},L)}=\bigcup_{\substack{g_1+g_2=g\\\delta_1+\delta_2=\delta}}\overline{\V_{\delta_1}(S/J,L_{g_1})}\times \overline{\V_{\delta_2}(S'/J,L_{g_2}')}.
\end{equation}
Each factor in the decomposition \eqref{mf} appears with multiplicity $1$ and defines a Cartier divisor in $\overline{\V_\delta(\mathcal Y^{\mathrm{exp}},\mathcal L)}$.
\end{lem}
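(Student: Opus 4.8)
The plan is to read off the decomposition \eqref{mf} from Li's decomposition \eqref{cor} by restricting to the smoothable locus and then pushing forward to images in $|\mathcal L|^{\mathrm{exp}}$. Recall that $\overline{\mathcal V_\delta(Y_0^\mathrm{exp},L)}$ is the central fiber of $\overline{\V_\delta(\mathcal Y^{\mathrm{exp}},\mathcal L)}$, which is the image of $\M_{g-\delta}(\mathcal Y^\mathrm{exp},\mathcal L)^{sm}$ under the map to $|\mathcal L|^{\mathrm{exp}}$ sending a stable map to its image; taking central fibers, $\overline{\mathcal V_\delta(Y_0^\mathrm{exp},L)}$ is therefore the image of $\M_{g-\delta}(Y_0^\mathrm{exp},L)^{sm}$. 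This image map is compatible with the splitting of an expanded target $Y_0[n]_0$ into its $(S,J)$- and $(S',J)$-relative halves, so on each stratum it factors as the product of the two image maps $\tilde\mu$ of Section~\ref{tre}.

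First I would restrict \eqref{cor} to the smoothable locus. The key point is that, since $L\cdot J=1$, a stable map to $Y_0[n]_0$ meets the singular locus of the expansion in a single node, so its domain is a one-point union $C_1\cup_p C_2$ of the two relative halves. Hence a connected smoothable map forces each half to be a connected, relatively smoothable map, and conversely the gluing of two relatively smoothable halves is automatic, the two evaluation maps at the relative divisor both taking the value $p_{10}(g_1)=p_{10}(g_2)'$ by the relation $p_{10}(h)=p_{10}(g-h)'$. Thus \eqref{cor} restricts to
\begin{equation*}
\M_{g-\delta}(Y_0^\mathrm{exp},L)^{sm}=\bigcup_{\substack{g_1+g_2=g\\h_1+h_2=g-\delta}}\M_{h_1}(S/J,L_{g_1})^{sm}\times \M_{h_2}(S'/J,L_{g_2}')^{sm},
\end{equation*}
and since $p_a(C_1\cup_p C_2)=h_1+h_2$, the curve on a general fiber $Y_t$ has genus $h_1+h_2=g-\delta$, as it should.

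Next I would pass to images. By the description in Section~\ref{tre} of $\overline{\V_{\delta_i}}$ as the image under $\tilde\mu$ of a smoothable stable-map stack, the image of the factor $\M_{h_1}(S/J,L_{g_1})^{sm}\times \M_{h_2}(S'/J,L_{g_2}')^{sm}$ is $\overline{\V_{\delta_1}(S/J,L_{g_1})}\times \overline{\V_{\delta_2}(S'/J,L_{g_2}')}$, the image of a relative half of geometric genus $h_i$ and arithmetic genus $g_i$ having $\delta_i:=g_i-h_i$ nodes. Re-indexing by $\delta_i=g_i-h_i$ turns $h_1+h_2=g-\delta$ into $\delta_1+\delta_2=\delta$, so \eqref{cor} becomes \eqref{mf}; the union stays non-disjoint because the place to cut the chain of ruled surfaces between $S$ and $S'$ is not unique, exactly as for \eqref{cor}.

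For the final assertion, each factor of \eqref{cor} appears with multiplicity $1$ and is a Cartier divisor in $\M_{g-\delta}(\mathcal Y^\mathrm{exp},\mathcal L)$ by \cite[Prop.~4.13]{Li1} and \cite[\S 3.1]{Li2}, as already used for \eqref{arbitro} in Lemma~\ref{federico}, while the analogous properties on the relative Hilbert-scheme side $|\mathcal L|^{\mathrm{exp}}$ come from the good-degeneration formalism of Li--Wu \cite{LW}. Multiplicity $1$ descends to \eqref{mf} since the image map is generically injective on each factor: a general point parametrizes a reduced curve $X_1\cup X_2$ with $X_i$ irreducible and $\delta_i$-nodal, whose only preimage is its normalization. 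The main obstacle is precisely the compatibility of smoothability with the splitting across $J$ --- that a map smoothable in the total family $\mathcal Y$ restricts to relatively smoothable halves, and conversely that glued relatively smoothable halves smooth out the separating node over $J$ --- which rests on Li's gluing and smoothing theory; granting this, the genus and node bookkeeping and the descent of the multiplicity and Cartier statements are routine.
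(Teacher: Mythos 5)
Your reduction hinges on the claim that Li's decomposition \eqref{cor} restricts to the smoothable loci, i.e.\ that $\M_{g-\delta}(Y_0^\mathrm{exp},L)^{sm}=\bigcup \M_{h_1}(S/J,L_{g_1})^{sm}\times \M_{h_2}(S'/J,L_{g_2}')^{sm}$, and you yourself defer both directions of this to ``Li's gluing and smoothing theory''. That is precisely the mathematical content of the lemma, not a routine compatibility, and it does not follow from \cite{Li1,Li2}: Li's decomposition holds for the full stack of expanded stable maps, whereas $\M_{g-\delta}(Y_0^\mathrm{exp},L)^{sm}$ is by definition the fiber over $0$ of the closure of the smoothable loci on the nearby $K3$ fibers. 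In one direction, a glued map whose halves match at $p_{10}(g_1)=p_{10}(g_2)'$ is certainly a point of the central fiber of the full stack, but whether it lies in the closure of the maps on $Y_t$, $t\neq 0$, with smooth domain birational onto the image is an unobstructedness statement that must be proved; points of the central fiber of a moduli stack over $\mathbb D$ need not deform out of it. In the other direction, a limit of smoothable maps on $Y_t$ has no a priori reason to have halves that are smoothable \emph{within the relative stacks} of $(S,J)$ and $(S',J)$, so ``forces each half to be relatively smoothable'' is unjustified as stated.

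The paper avoids restricting \eqref{cor} and argues directly on the image side in $|\mathcal L|^{\mathrm{exp}}$. For the inclusion $\supset$, it glues a general $\delta_1$-nodal $C\in V_{\delta_1}(S,L_{g_1})$ to a general $\delta_2$-nodal $C'\in V_{\delta_2}(S',L_{g_2}')$ at $p_{10}(g_1)=p_{10}(g_2)'$ and verifies by deformation theory that the resulting curve $X=C\cup C'\subset Y_0$ deforms without obstruction out of the central fiber of $\overline s_\delta$ (the same pattern as in the proof of Proposition \ref{cz}: compute the normal sheaf $N_f$ on the two normalized halves and conclude smoothability, using \cite{Va}-type criteria). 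For the inclusion $\subset$, it uses that every component $\V$ of the central fiber has dimension exactly $g-\delta$ by upper semicontinuity over the one-dimensional disc, and then the dimension counts of Propositions \ref{dimension} and \ref{ruled} to force $n=0$ and geometric genus exactly $g-\delta$ at the general point of $\V$, whence $X=C\cup C'$ with $C\in\overline{V_{\delta_1}(S,L_{g_1})}$ and $C'\in\overline{V_{\delta_2}(S',L_{g_2}')}$ via the density statement of Proposition \ref{nodal}. You would need to supply both of these arguments (or equivalents) to close the gap; once they are in place, the multiplicity-one and Cartier assertions do follow from the corresponding properties of \eqref{cor} in \cite{Li2}, exactly as in the paper, so that part of your write-up is sound.
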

\begin{proof}
As concerns the inclusion $\supset$, we recall that a general point in any irreducible component of $\overline{\V_{\delta_1}(S/J,L_{g_1})}$ (respectively, $\overline{\V_{\delta_2}(S'/J,L_{g_2}')}$) parametrizes an irreducible curve $C\in V_{\delta_1}(S,L_{g_1})$ (respectively, $C'\in V_{\delta_2}(S',L_{g_2}')$). By gluing $C$ and $C'$ along their intersection point $p_{10}(g_1)=p_{10}(g_2)'$ with $J$, one obtains a curve $X=C\cup C'\subset Y_0=S\cup S'$ with $\delta_1+\delta_2$ nodes outside of $X$; there is no obstruction to deforming such an $X$ outside of the central fiber of $\overline s_\delta$ and this proves $\supset$.

We now prove the opposite inclusion $\subset$. Let $\V$ be a component of the central fiber $\overline{\mathcal V_\delta(Y_0^\mathrm{exp},L)}$ of $\overline s_\delta$. Then $\V$ has dimension $g-\delta$ by upper semicontinuity along with the fact that $\mathbb D$ is $1$-dimensional. A general point of $\V$ parametrizes a curve $X\subset Y_0[n]_0$ for some $n\geq 0$ such that $X\in|\mathcal L|^{\mathrm{exp}}$  and the normalization of $X$ outside of the singular locus of $Y_0[n]_0$ has arithmetic genus $h\leq g-\delta$. Propositions \ref{dimension} and \ref{ruled} then yield $n=0$ and $h=g-\delta$, so that $X=C\cup C'$ with $C\in\overline{V^{g_1-\delta_1}(S,L_{g_1})} =\overline{V_{\delta_1}(S,L_{g_1})}$ and $C'\in\overline{V^{g_2-\delta_2}(S',L_{g_2})} =\overline{V_{\delta_2}(S',L_{g_2}')}$ for some integers $0\leq \delta_1\leq g_1$, $0\leq \delta_2\leq g_2$ such that $g_1+g_2=g$ and $\delta_1+\delta_2=\delta$. This proves $\subset$.

The last part of the statement is a consequence of the same property for the decomposition \eqref{cor}, that follows from \cite{Li2} since $L_h\cdot J=1$ for every $h\geq 0$. 
\end{proof}

\color{black}
\begin{prop}\label{paths}
If $0\leq\delta\leq g-1$, every component $\V$ of $\overline{\mathcal V_\delta(Y_0^\mathrm{exp},L)}$ can be connected to $|L_{0}|\times \overline{\V_{g-\delta}(S'/J,L_{g}')}$ through a sequence of irreducible components $$\V=\V_0,\V_1,\ldots, \V_r\subset |L_{0}|\times \overline{\V_{g-\delta}(S'/J,L_{g}')}$$ such that for all $0\leq i\leq r-1$ the following hold:
\begin{itemize}
\item[(i)] the intersection $\V_i\cap \V_{i+1}$ has codimension $1$;
\item[(ii)] a general point of $\V_i\cap \V_{i+1}$ parametrizes a curve $$X=X_0\cup X_1\cup X_0'\subset Y_0[1]_0$$ such that the components $X_0\subset S$ and $X_0'\subset S'$ are nodal,  the component $X_1\subset R$ is immersed, the normalization of $X$ outside of its intersection points with the singular locus of $ Y_0[1]_0$ has arithmetic genus precisely $g-\delta$. 
\end{itemize}
In particular, the intersection $\V_i\cap \V_{i+1}$ is generically reduced.
\end{prop}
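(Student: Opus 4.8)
The plan is to connect the given component $\V$ to the target by a finite chain of \emph{elementary moves}, each passing through a single codimension-one boundary stratum in which one ruled component is inserted between $S$ and $S'$. By Lemma \ref{tosto}, $\V=\V_S\times\V_{S'}$ for a splitting $g_1+g_2=g$, $\delta_1+\delta_2=\delta$ with $0\leq\delta_i\leq g_i$, where $\V_S$ (resp.\ $\V_{S'}$) is a component of $\overline{\V_{\delta_1}(S/J,L_{g_1})}$ (resp.\ $\overline{\V_{\delta_2}(S'/J,L_{g_2}')}$). I would study the strata of $\overline{\mathcal V_\delta(Y_0^\mathrm{exp},L)}$ parametrizing curves $X=X_0\cup X_1\cup X_0'$ on $Y_0[1]_0=S\cup R\cup S'$. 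By Propositions \ref{dimension} and \ref{ruled} such a stratum, with $X_0,X_1,X_0'$ of geometric genera $h_0,h_1,h_2$, has dimension $h_0+(h_1-1)+h_2$, hence is of codimension one exactly when $h_0+h_1+h_2=g-\delta$; contracting the neck $R$ onto $S$ or onto $S'$ then displays it as a codimension-one intersection of the two components obtained by absorbing $X_1$ into one side or the other. If $X_1$ has arithmetic genus $b$, this absorption transfers $b$ units of arithmetic genus and $b-h_1$ nodes across the neck, and, as soon as $X_1$ is immersed and $X_0,X_0'$ are nodal, the stratum has exactly the shape required in (ii).

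I would use only two such moves. The \emph{elliptic move} uses a smooth genus-one curve on $R$ ($b=h_1=1$), transferring one unit of genus and no node; it occurs in the boundary of a Severi component precisely when that side has positive geometric genus, so that an elliptic tail may be pinched off onto the neck. The \emph{node move} uses an immersed curve on $R$ of arithmetic genus two and geometric genus one ($b=2$, $h_1=1$, a single node), transferring two units of genus and one node. In either case $X_1$ is immersed and the two ends can be taken general, hence nodal, so (ii) holds; the construction of Lemma \ref{federico} identifies these strata with genuine Cartier boundary divisors, so the resulting intersections are honestly of codimension one, and their generic reducedness follows factorwise from Propositions \ref{ruled}, \ref{cohen} and \ref{fabbrica}(ii). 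A key structural point, from the stability constraint recalled before Proposition \ref{ruled}, is that a neck component is never rational ($h_1\geq1$): a node can therefore only be carried across together with a unit of geometric genus, which is why the node move is needed rather than a bare transfer of a node.

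It then remains to steer $(g_1,\delta_1)$ to $(0,0)$ within the admissible region $0\leq\delta_1\leq g_1$, $0\leq g_1-\delta_1\leq g-\delta$, each step being realized by one of the two moves in either direction. I would first apply forward elliptic moves $(g_1,\delta_1)\mapsto(g_1-1,\delta_1)$ as long as the $S$-side has positive geometric genus, arriving at $(\delta_1,\delta_1)$, where the $S$-curve is rational. I would then lower the surviving $\delta_1$ nodes one at a time by the diagonal cycle: a backward elliptic move $(\delta_1,\delta_1)\mapsto(\delta_1+1,\delta_1)$, pinching an elliptic tail off the $S'$-side, followed by a forward node move $(\delta_1+1,\delta_1)\mapsto(\delta_1-1,\delta_1-1)$. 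Each cycle keeps the configuration admissible and lowers $\delta_1$ by one, so after finitely many steps the chain reaches a component $|L_0|\times\mathcal B'$, where $\mathcal B'$ is a component of $\overline{\V_\delta(S'/J,L_g')}$. As the target is the entire union $|L_0|\times\overline{\V_\delta(S'/J,L_g')}$, reaching any such $\mathcal B'$ completes the sequence.

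The point I expect to require the most care is the node bookkeeping of the second stage. Since the Severi component of a rational curve on $S$ is zero-dimensional it admits no positive-genus tail of its own, and since a neck component is never rational, nodes cannot be slid directly across the neck; the backward elliptic move circumvents this by borrowing a unit of geometric genus from the opposite surface. This is exactly where $\delta\leq g-1$ enters: it guarantees $g-\delta\geq1$, so the $S'$-side always retains enough geometric genus to feed the auxiliary elliptic tail and the diagonal cycle never stalls. The residual verifications --- nonemptiness of each stratum, its incidence with the prescribed component, and its generic reducedness of codimension one --- are all obtained by applying Propositions \ref{dimension}, \ref{ruled}, \ref{cohen} and \ref{fabbrica}(ii) to each factor of the product description of the strata.
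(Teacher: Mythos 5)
Your construction of the chain agrees with the paper's in its first stage and in all the supporting technology: the codimension-one one-neck strata with dimension count $h_0+(h_1-1)+h_2$, the observation that a neck component is never rational (so a node cannot cross the neck without a unit of geometric genus), the forward elliptic moves pinching a smooth elliptic tail onto $R$ until the $S$-side becomes rational, and the factorwise appeal to Propositions \ref{dimension}, \ref{ruled} and \ref{cohen} for generic reducedness are all exactly what the paper does. You diverge after reaching $(\delta_1,\delta_1)$: you shuttle the surviving nodes across one at a time, via the diagonal cycle of a backward elliptic move followed by a forward node move with a genus-two one-nodal neck. The paper instead performs a single bulk transfer: inside the $S'$-factor it passes in codimension one to curves $\tilde J\cup D_0'$ where $\tilde J\in V^1(R,(\delta_2+1)J_0+f_{p_{10}(\delta+1)})$ is an \emph{irreducible} elliptic curve carrying all $\delta_2$ nodes at once and $D_0'\in |L'_{g-\delta-1}|$, landing in $\overline{\V_{\delta}(S/J,L_{\delta+1})}\times|L'_{g-\delta-1}|^{\mathrm{exp}}$; one final move through $E_9+\overline J+F_0'$, with $\overline J\in V^1(R,(\delta+1)J_0+f_{p_{10}(\delta+1)})$, then contracts the $S$-side to $|L_0|$. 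This gives a chain of length $g_1-\delta_1+2$ with only three stratum-existence claims, and after the bulk move the $S'$-factor is a full linear system, hence irreducible, which eliminates all component-tracking on that side.

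The genuine gap is in your diagonal cycle. The forward node move must be executed from the \emph{specific} component $\W_1\subset\overline{\V_{\delta_1}(S/J,L_{\delta_1+1})}$ that the preceding backward elliptic move lands on, and you give no argument that $\W_1$ contains the required stratum $C_0\cup C_1$ with $C_0\in V_{\delta_1-1}(S,L_{\delta_1-1})$ and $C_1\in V^1(R,2J_0+f)$ one-nodal. Since $\dim\W_1=1$, its boundary in the expanded linear system is a \emph{finite} set of points, each of one of the types $h=0,\dots,\delta_1$ permitted by Proposition \ref{cz} (neck of arithmetic genus $h+1$, geometric genus one, $h$ nodes); nothing in your setup excludes that every boundary point of $\W_1$ is of type $h=0$ (merely undoing your backward move) or of type $h\geq 2$, and in a one-dimensional component with finitely many boundary points there is no genericity left to exploit. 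The same unproven claim recurs at every iteration of the cycle, in components that arise dynamically and cannot be chosen. This is precisely the difficulty the paper's design sidesteps: by loading all remaining nodes onto the irreducible elliptic neck in a single move, it never has to extract a node from a rigid zero- or one-dimensional configuration, and each stratum it passes through is exhibited explicitly. (Your final landing in $|L_0|\times\overline{\V_\delta(S'/J,L_g')}$ matches the paper's proof; the $\V_{g-\delta}$ in the statement is an index slip of the paper itself.)
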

\begin{proof}
By Lemma \ref{tosto}, there exist integers $0\leq \delta_1\leq g_1$, $0\leq \delta_2\leq g_2$ such that $g_1+g_2=g$ and $\delta_1+\delta_2=\delta$ and $\V_0:=\V=\W_0\times \W_0'$ for some irreducible components $\W_0$ of $\overline{\V_{\delta_1}(S/J,L_{g_1})}$ and $\W_0'$ of $ \overline{\V_{\delta_2}(S'/J,L_{g_2}')}$. 

If $g_1-\delta_1\geq 1$, then $\W_0$ contains in codimension $1$ points that parametrize curves $C=C_0\cup C_1\subset S[1]_0$ with $C_0\in V_{\delta_1}(S,L_{g_1-1})$ and $C_1\simeq J\in |J_0+f_{p_{10}(g_1)}|$. 
 For any $C_0'\in V_{\delta_2}(S',L_{g_2}')$ the nodal curve $C:=C_0\cup C_1\cup C_0'$ also defines a point of $\overline{\V_{\delta_1}(S/J,L_{g_1-1})}\times \overline{\V_{\delta_2}(S'/J,L_{g_2+1}')}$ and this proves that $\V_0:=\W_0\times \W_0'$ can be connected to a component $$\V_1=\W_1\times \W_1'\subset \overline{\V_{\delta_1}(S/J,L_{g_1-1})}\times \overline{\V_{\delta_2}(S'/J,L_{g_2+1}')}$$ so that the intersection $\V_0\cap \V_1$ satisfies conditions (i)-(ii) in the statement. By repeating the same argument $g_1-\delta_1$ times, we find a sequence
$$\V=\V_0,\V_1,\ldots, \V_{g_1-\delta_1}$$
of irreducible components of $\overline{\mathcal V_\delta(Y_0^\mathrm{exp},L)}$ such that $\V_i=\W_i\times \W_i'$ is an irreducible component of $\overline{\V_{\delta_1}(S/J,L_{g_1-i})}\times \overline{\V_{\delta_2}(S'/J,L_{g_2+i}')}$ and that conditions (i)-(ii) in the statement hold for the intersection $\V_i\cap \V_{i+1}$.
 
We now start from $$\V_{g_1-\delta_1}=\W_{g_1-\delta_1}\times \W_{g_1-\delta_1}'\subset \overline{\V_{\delta_1}(S/J,L_{\delta_1})}\times \overline{\V_{\delta_2}(S'/J,L_{g-\delta_1}')}$$ and notice that $\W_{g_1-\delta_1}'\subset \overline{\V_{\delta_2}(S'/J,L_{g-\delta_1}')}$ contains in codimension $1$ points that parametrize curves $D=\tilde J\cup D_0'\subset S[1]_0'$, where $D_0'\in |L_{g-\delta-1}'|$ and $\tilde J$ is an irreducible elliptic curve such that $\tilde J\in V^1(R,(\delta_2+1)J_0+f_{p_{10}(\delta+1)})$. For every rational curve $D_0\in V_{\delta_1}(S,L_{\delta_1})$ the curve $D:=D_0\cup \tilde J\cup D_0'$ also defines a point of $\overline{\V_{\delta}(S/J,L_{\delta+1})}\times |L_{g-\delta-1}'|^{\mathrm{exp}}$; this proves that $\V_{g_1-\delta_1}$ can be connected to a component $$\V_{g_1-\delta_1+1}:=\W_{g_1-\delta_1+1}\times |L_{g-\delta-1}'|^{\mathrm{exp}}\subset \overline{\V_{\delta}(S/J,L_{\delta+1})}\times |L_{g-\delta-1}'|^{\mathrm{exp}}$$ so that (i) and (ii) hold.

Finally, we use the fact that the component $\W_{g_1-\delta_1+1}\subset\overline{\V_{\delta}(S/J,L_{\delta+1})}$ contains in codimension $1$ curves of the form $E_9+\overline J$, where $E_9$ is the ninth exceptional divisor on $S$ (and thus the only curve in the linear system $|L_0|$) and $\overline J$ is an irreducible curve such that $\overline J\in V^1(R, (\delta+1)J_0+f_{p_{10}(\delta+1)})$. For any curve $F_0'\in |L_{g-\delta-1}'|$, the divisor $E_9+\overline J+F_0'\in Y_0[1]_0$ also defines a point of $|L_0|\times \overline{\V_\delta(S'/J,L_g')}$. This proves that $\V_{g_1-\delta_1+1}$ is connected to a component $\V_{g_1-\delta_1+2}$ of $|L_0|\times \overline{\V_\delta(S'/J,L_g')}$ so that (i)-(ii) hold for $\V_{g_1-\delta_1+1}\cap\V_{g_1-\delta_1+2}$. 

It only remains to prove that conditions (i)-(ii) imply that the intersection $\V_i\cap \V_{i+1}$ is generically reduced. Conditions (i)-(ii) together with Propositions \ref{dimension} and \ref{ruled} yield that every component of $\V_i\cap \V_{i+1}$ is birational to an open substack of 
$$
\overline{V_{\delta_0}(S,L_{g_0})}\times \left[\overline{V^{h_1}(R,g_1J_0+ f_{p_{10}(g_0+g_1)})}/\mathbb C^*\right]\times \overline{V_{\delta_0'}(S',L_{g_0'}')}
$$
for some integers $0\leq \delta_0\leq g_0$, $0\leq \delta_0'\leq g_0'$, $1\leq h_1\leq g_1$ such that $g_0+g_1+g_0'=g$ and $\delta_0+g_1-h_1+\delta_0'=\delta$. The generic reducedness of $\V_i\cap \V_{i+1}$ thus follows from Propositions \ref{dimension} and \ref{ruled}.\end{proof}
\begin{thm}\label{connected}
Let $(Y,L)$ be a general primitively polarized $K3$ surface of genus $g\geq 2$ and fix $0\leq \delta\leq g-1$. Then the closure of the Severi variety $\overline{V_{\delta}(Y,L)}\subset |L|$ is connected and the same holds true for the relative normalization $\overline{V_{\delta}(Y,L)}^n$ of $\overline{V_{\delta}(Y,L)}$ along $\overline{V_{\delta+1}(Y,L)}$. 
\end{thm}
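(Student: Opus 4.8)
The plan is to reduce both statements, via a single degeneration together with upper semicontinuity of $h^0(\mathcal O)$, to the connectedness results already available on the Halphen side (Theorem \ref{main}, Theorem \ref{expanded} and Proposition \ref{fabbrica}). I would work with the proper morphism $\overline s_\delta:\overline{\V_\delta(\mathcal Y^{\mathrm{exp}},\mathcal L)}\to\mathbb D$ attached to the type II degeneration $Y_0=S\cup_J S'$, whose fibre over a general $t$ is the reduced scheme $\overline{V_\delta(Y_t,L_t)}$ and whose central fibre is $X_0:=\overline{\mathcal V_\delta(Y_0^{\mathrm{exp}},L)}$. Since $\overline{V_\delta(Y_t,L_t)}$ is reduced, its number of connected components equals $h^0(\mathcal O)$; by upper semicontinuity of $h^0(\mathcal O)$ along $\mathbb D$ it therefore suffices to prove $h^0(X_0,\mathcal O_{X_0})=1$. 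The same semicontinuity, applied over the irreducible base $\mathcal F_g^\circ$ to the fibrewise closure of the universal Severi variety $\mathcal V_{g,\delta}$, then upgrades the conclusion from our particular $(Y_t,L_t)$ to an arbitrary general member of $\mathcal F_g$. Throughout I assume $1\le\delta\le g-1$, the case $\delta=0$ being trivial (see the last paragraph).

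The heart of the matter is that I must prove the scheme-theoretic equality $h^0(\mathcal O_{X_0})=1$, not merely that $X_0$ is connected. Connectedness itself is immediate: by Proposition \ref{paths} every irreducible component of $X_0$ is joined, through a chain of codimension-$1$ generically reduced intersections, to a component of $|L_0|\times\overline{\V_{g-\delta}(S'/J,L_g')}$, and this stratum is connected because $|L_0|$ is a single point and $\overline{\V_{g-\delta}(S'/J,L_g')}$ is connected by Proposition \ref{fabbrica}(i). To pass from connectedness to $h^0(\mathcal O_{X_0})=1$ I would perform a Mayer--Vietoris induction along a spanning tree of the (connected) dual graph of $X_0$: each vertex is an irreducible component which occurs with multiplicity $1$ in $X_0$ by Lemma \ref{tosto} and, once $X_0$ is known to be reduced (see the next paragraph), is integral with $h^0(\mathcal O)=1$; each edge is a nonempty generically reduced intersection (Proposition \ref{paths}(ii) and Proposition \ref{fabbrica}(ii)) along which the constants on the two adjacent pieces must agree. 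Attaching the components one at a time and using the exact sequence $0\to\mathcal O_{Y\cup Z}\to\mathcal O_Y\oplus\mathcal O_Z\to\mathcal O_{Y\cap Z}\to 0$, the value $h^0(\mathcal O)=1$ is preserved at every step.

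The delicate point, and the one I expect to be the main obstacle, is to equip $X_0$ with a reduced structure, so that the above Mayer--Vietoris bookkeeping is not corrupted by nilpotents: generic reducedness alone does not suffice, since an irreducible but non-reduced proper scheme can have $h^0(\mathcal O)>1$. I would settle this by Serre's criterion. The multiplicity-$1$ decomposition of Lemma \ref{tosto} already gives that $X_0$ is generically reduced ($R_0$). On the other hand $X_0=\overline s_\delta^{-1}(0)$ is a Cartier divisor in the total space, so it is $S_1$ as soon as that total space is Cohen--Macaulay (equivalently $S_2$) along $X_0$; I would establish this Cohen--Macaulayness exactly as in Proposition \ref{cohen}, by realizing the relative Severi variety as a degeneracy locus of the expected dimension. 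Then $R_0+S_1$ forces $X_0$ to be reduced, and the argument of the previous paragraph applies. It is precisely the interplay between the multiplicity-$1$ Cartier structure and the generically reduced pairwise intersections on which the whole argument rests.

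For the second assertion, concerning the relative normalization $\overline{V_\delta(Y,L)}^n$ of $\overline{V_\delta(Y,L)}$ along $\overline{V_{\delta+1}(Y,L)}$, I would run the identical argument after replacing $\overline s_\delta$ by its relative normalization along the divisor $\overline{V_{\delta+1}}$: Proposition \ref{fabbrica}(i) supplies connectedness of $\overline{\V_\delta(S/J,L_g)}^n$ on the Halphen side, while Proposition \ref{fabbrica}(ii) ensures that the pairwise intersections of components on the normalized central fibre stay generically reduced, so that the same Mayer--Vietoris induction and $h^0$-semicontinuity go through. Finally, the degenerate case $\delta=0$, where $\overline{V_0(Y,L)}=|L|$ is already smooth and irreducible (so that its relative normalization is an isomorphism), is trivial and is treated separately.
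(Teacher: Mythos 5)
Your overall scaffolding (degenerate to $Y_0=S\cup_J S'$, use Propositions \ref{paths} and \ref{fabbrica} to connect the strata of the central fibre, then transfer to a general fibre) matches the paper, but the step on which your entire $h^0$-semicontinuity argument rests --- reducedness of the central fibre $X_0=\overline{\mathcal V_\delta(Y_0^\mathrm{exp},L)}$ --- is a genuine gap, and it cannot be filled ``exactly as in Proposition \ref{cohen}.'' That proposition establishes Cohen--Macaulayness only for the incidence degeneracy locus $I\subset |L_g|\times S^{[\delta]}$ (and only away from $t^{-1}|L_{g-1}|$); the Severi variety itself is merely the \emph{image} of $I$ under the generically finite projection $t$, and Cohen--Macaulayness does not descend along such maps --- which is precisely why the paper never claims that $\overline{V_\delta(S,L_g)}$, let alone the good degeneration $\overline{\V_\delta(\mathcal Y^{\mathrm{exp}},\mathcal L)}$, is Cohen--Macaulay, and why it only ever extracts codimension-$1$ and generic-reducedness conclusions from the degeneracy-locus description. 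For the total space of the expanded degeneration the situation is worse still: its central fibre contains strata living on expansions $Y_0[n]_0$, described via products of relative Severi stacks and $\mathbb C^*$-quotients (Lemma \ref{tosto}, Proposition \ref{fabbrica}), for which no degeneracy-locus model is available in the paper. So your Serre-criterion route ($R_0$ from Lemma \ref{tosto} plus $S_1$ from ``Cartier divisor in a CM total space'') has no support for the $S_1$ half. Note also an internal redundancy: if $X_0$ \emph{were} reduced and connected, then $h^0(\mathcal O_{X_0})=1$ is automatic for a proper connected reduced scheme, so your Mayer--Vietoris induction along a spanning tree, and with it the generically reduced pairwise intersections, would do no work; conversely, without reducedness the Mayer--Vietoris bookkeeping proves nothing. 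In your proposal the generically reduced intersections are therefore decorative, whereas they should be the crux.

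The paper's actual proof is designed to avoid exactly the statement you cannot prove. It works with the relative normalization $s_\delta^n$ and needs only: (i) the central fibre is \emph{generically} reduced (Lemma \ref{tosto} plus Proposition \ref{dimension}(iii)), so that distinct components of a general fibre restrict to distinct components of the central fibre; and (ii) the chains of Proposition \ref{paths}, in which consecutive components meet in generically reduced loci not contained in $\overline{\mathcal V_{\delta+1}(Y_0^\mathrm{exp},L)}$. If a general fibre of $s_\delta^n$ were disconnected, there would be two components $\Z_1,\Z_2$ of the total space with $\Z_1\cap\Z_2$ contained in the central fibre, and then, by the local criterion \cite[76.36.8]{St}, a general point of $Z_1\cap Z_2$ would be a \emph{nonreduced} point of the central fibre --- contradicting (ii). In other words, nonreducedness is not excluded globally but only at the generic points of the relevant intersections, which is all the degeneracy-locus technology of Propositions \ref{cohen} and \ref{ruled} actually delivers. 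To repair your write-up you should replace the semicontinuity-of-$h^0$ framework by this pointwise nonreducedness criterion on the total space (run, as you correctly propose for the second assertion, on the family normalized along $\overline{\V_{\delta+1}}$, where Proposition \ref{fabbrica} supplies both connectedness and the generic reducedness of the intersections).
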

\begin{proof} 
Let $\overline s_{\delta}:\overline{\V_\delta(\mathcal Y^{\mathrm{exp}},\mathcal L)} \to\mathbb D$ be the good degeneration of the relative Severi variety to the family $\chi:\mathcal Y\to \mathbb D$ as at the beginning of this section. We denote by $ s_{\delta}^n:\overline{\V_\delta(\mathcal Y^{\mathrm{exp}},\mathcal L)}^n \to\mathbb D$ the relative normalization of $\overline{\V_\delta(\mathcal Y^{\mathrm{exp}},\mathcal L)}$ along $\overline{\V_{\delta+1}(\mathcal Y^{\mathrm{exp}},\mathcal L)}$. In particular, a general fiber of $ s_{\delta}^n$ is the normalization of $\overline{V_\delta(Y_t,L_t)}$ along $\overline{V_{\delta+1}(Y_t,L_t)}$, while the central fiber is the normalization of $\overline{\mathcal V_\delta(Y_0^\mathrm{exp},L)}$ along $\overline{\mathcal V_{\delta+1}(Y_0^\mathrm{exp},L)}$. We need to show that a general fiber of $ s_{\delta}^n$ is connected.

First of all, we note that the central fiber $\overline{\mathcal V_\delta(Y_0^\mathrm{exp},L)}$ of $\overline s_{\delta}$ is generically reduced. Indeed, this follows directly from Lemma \ref{tosto} and Proposition \ref{dimension}(iii). Obviously, the same holds true for the central fiber of $ s_{\delta}^n$, thus implying that two components of a general fiber of $ s_{\delta}^n$ remain distinct also on the central fiber. 

By Proposition \ref{paths}, every component of the central fiber $\overline{\mathcal V_\delta(Y_0^\mathrm{exp},L)}$ of $\overline s_{\delta}$ can be connected to $|L_{0}|\times \overline{\V_{g-\delta}(S'/J,L_{g}')}$ through a sequence of irreducible components $\V=\V_0,\V_1,\ldots, \V_r\subset |L_{0}|\times \overline{\V_{g-\delta}(S'/J,L_{g}')}$ such that the intersections $\V_i\cap\V_{i+1}$ are generically reduced and not contained in $\overline{\mathcal V_{\delta+1}(Y_0^\mathrm{exp},L)}$. In particular, the latter property also implies that every component $\V^n$ of the relative normalization $\overline{\mathcal V_\delta(Y_0^\mathrm{exp},L)}^n$ of $s_{\delta}^n$ can be connected to $|L_{0}|\times \overline{\V_{g-\delta}(S'/J,L_{g}')}^n$  through a sequence of irreducible components $\V^n=\V_0^n,\V_1^n,\ldots, \V_r^n\subset |L_{0}|\times \overline{\V_{g-\delta}(S'/J,L_{g}')}^n$ such that the intersections $\V_i^n\cap\V_{i+1}^n$ are generically reduced. Since $|L_0|$ is a point, Proposition \ref{fabbrica} implies that $|L_{0}|\times \overline{\V_{g-\delta}(S'/J,L_{g}')}^n$ is connected and that the intersection of two components of $|L_{0}|\times \overline{\V_{g-\delta}(S'/J,L_{g}')}^n$ is generically reduced. 

We use this in order to conclude that a general fiber of $ s_{\delta}^n$ is connected, too. If these were not the case, there would exist two irreducible components $\Z_1$ and $\Z_2$ of $\overline{\V_\delta(\mathcal Y^{\mathrm{exp}},\mathcal L)}^n$ (whose restriction to the central fiber of $ s_{\delta}^n$ we denote by $Z_1$ and $Z_2$, respectively) such that $\Z_1\cap\Z_2=Z_1\cap Z_2$; in particular, a general point of $Z_1\cap Z_2$ should be a nonreduced point of the central fiber by, e.g., \cite[76.36.8]{St}. However, we may assume that this is not the case because the above discussion yields that any two components of the central fiber can be connected through a sequence of components whose intersection is generically reduced. 
\end{proof}

\section{Irreducibility on a general $K3$ surface}\label{cinque}
In this section, we will focus on the Severi problem for general polarized $K3$ surfaces. As we have already proved that Severi varieties of positive dimension are connected, the irreducibility problem can be approached by investigating how two irreducible components may intersect.

\begin{prop}\label{two}
Let $(Y,L)$ be a general primitively polarized $K3$ surface of genus $g\geq 2$ and fix $0\leq \delta\leq g-1$. The intersection of two irreducible components of $\overline{V_\delta(Y,L)}$, if nonempty, has pure codimension $1$. 
\end{prop}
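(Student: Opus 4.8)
The plan is to rerun the degeneracy-locus argument of Proposition \ref{cohen} directly on the general $K3$ surface $(Y,L)$, the crucial simplification being that here no analogue of the exceptional subsystem $|L_{g-1}|$ intervenes. Since $\mathrm{Pic}(Y)=\mathbb Z L$ with $L$ primitive, every member of $|L|$ is integral, so the incidence variety
$$
I:=\overline{\left\{ (C,z)\in |L|\times Y^{[\delta]}\,\textrm{ s.t. }\, C\in |L\otimes I_z^2|\right\}}\subset |L|\times Y^{[\delta]}
$$
is mapped by the first projection $t_I:I\to |L|$ entirely into $\overline{V_\delta(Y,L)}$: indeed any $(C,z)\in I$ forces $C$ to be singular along $z$, hence $\delta(C)\geq \delta$, and Theorem \ref{dense} gives the equality $\overline{V^{g-\delta}(Y,L)}=\overline{V_\delta(Y,L)}$. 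Thus the entire locus $I$ lands in the $(g-\delta)$-dimensional variety $\overline{V_\delta(Y,L)}$, with no sheets escaping into a lower-dimensional linear subsystem as happened on the Halphen surface.

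Exactly as in Proposition \ref{cohen}, I would realize $I$ as the degeneracy locus $D(\widetilde\phi)$ of the map $\widetilde\phi:\mathcal U\to \pi^*F$ on $\mathbb P(E)\simeq |L|\times Y^{[\delta]}$, where $E=q_*(p^*L)$ has rank $g+1$, $F=q_*(p^*L|_{2\Delta})$ has rank $3\delta$, and $\mathcal U\subset\pi^*E$ is the universal subbundle; its expected dimension is $g-\delta$. To certify that $D(\widetilde\phi)$ attains this dimension, I would transport Lemma \ref{lemcz} to the present setting. Its proof only uses the density statement (now Theorem \ref{dense} in place of Proposition \ref{nodal}), the dimension count $\dim\overline{V_{\delta+k+2}(Y,L)}=g-\delta-k-2$ supplied by Proposition \ref{known}, and purely local facts about singularities of small $\delta$-invariant; it never invokes the decomposition Lemma \ref{decomposition}, so it carries over verbatim. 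Consequently the locus in $I$ over which the fibers of $t_I$ are not finite has dimension $\leq g-\delta-2$, whence $\dim I\leq g-\delta$ and $t_I$ is generically injective (birational) onto $\overline{V_\delta(Y,L)}$.

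Since $I=D(\widetilde\phi)$ now has everywhere the expected dimension $g-\delta$, it is locally Cohen-Macaulay (cf. \cite[II, Prop. 4.1]{ACGH}), with no exceptional locus to excise. Hartshorne's Connectedness Theorem (cf. \cite[Thm. 18.12]{Ei}) then guarantees that every irreducible component of the intersection of two components of $I$ has pure codimension $1$ in $I$, hence dimension $g-\delta-1$. Finally, given two components $V,W$ of $\overline{V_\delta(Y,L)}$ meeting along a component $Z$, I would set $I_V:=t_I^{-1}(V)$, $I_W:=t_I^{-1}(W)$ and note $Z=t_I(I_V\cap I_W)$; by the fiber bound above, the codimension-$1$ locus $I_V\cap I_W$ maps generically finitely onto $Z$, forcing $\dim Z=g-\delta-1$, which is codimension $1$ in $\overline{V_\delta(Y,L)}$, as desired. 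The main obstacle is the careful transfer of Lemma \ref{lemcz} together with the dimensional bookkeeping in this last step: one must ensure that the generically finite projection $t_I$ does not drop the dimension of $I_V\cap I_W$, which is precisely what confining the positive-dimensional-fiber locus to dimension $\leq g-\delta-2$ secures.
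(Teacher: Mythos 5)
Your proposal is correct and follows essentially the same route as the paper's own proof, which likewise reruns the degeneracy-locus argument of Proposition \ref{cohen} on $(Y,L)$, noting that since every curve in $|L|$ is integral one has $I=D(\widetilde\phi)$ with no excess components supported over a smaller linear system, and that the proof of Lemma \ref{lemcz} carries over to bound the locus of positive-dimensional fibers of $t_I$ by dimension $g-\delta-2$. The concluding steps you spell out --- Cohen--Macaulayness of the expected-dimensional degeneracy locus, Hartshorne's Connectedness Theorem, and the pushdown along the generically finite projection $t_I$ --- are precisely what the paper invokes by reference to Proposition \ref{cohen}.
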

\begin{proof}
Since $(Y,L)$ is general, we may assume that all curves in $|L|$ are integral and that $\overline{V^\delta(Y,L)}=\overline{V_{g-\delta}(Y,L)}$. The proof proceeds as the one of Proposition \ref{cohen} and is actually easier because all curves in $|L|$ are integral. In this case $I=D(\widetilde{\phi})$ and the same proof as that of Lemma \ref{lemcz} implies that the locus in $I$ where the fibers of the projection $t: I\longrightarrow |L|$ have positive dimension has dimension $\leq g-\delta-2$.
\end{proof}

\begin{thm}\label{local}
Let $(Y,L)$ be a general primitively polarized $K3$ surface of genus $g\geq 4$ and fix $0\leq \delta \leq g-4$. Then, the Severi variety $V_{\delta}(Y,L)$ is irreducible.
\end{thm}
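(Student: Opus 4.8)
The plan is to deduce irreducibility from the connectedness statements of Theorem~\ref{connected} together with the codimension estimate of Proposition~\ref{two} and a local study of $\mathrm{Sing}\,\overline{V_\delta(Y,L)}$ following~\cite{CD2}. Since $(Y,L)$ is general I may assume every curve in $|L|$ is integral and use Theorem~\ref{dense} to identify $\overline{V_\delta(Y,L)}$ with the equigeneric locus $\overline{V^{g-\delta}(Y,L)}$. Arguing by contradiction, suppose $\overline{V_\delta(Y,L)}=V_1\cup\cdots\cup V_k$ with $k\geq 2$ distinct irreducible components. By Proposition~\ref{two} any two of them meet, if at all, in pure codimension~$1$, so it suffices to understand the codimension-one components of the singular locus.

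I would first locate a good intersection locus. Along $\overline{V_{\delta+1}(Y,L)}$ the variety $\overline{V_\delta(Y,L)}$ consists, locally, of at most $\delta+1$ smooth sheets indexed by the partial normalizations of the parametrized curve (cf. the discussion in the proof of Proposition~\ref{cohen} and~\cite{DH}), and the relative normalization $\overline{V_\delta(Y,L)}^n$ precisely separates these sheets. Hence two distinct components meeting only inside $\overline{V_{\delta+1}(Y,L)}$ would have disjoint preimages upstairs; since $\overline{V_\delta(Y,L)}^n$ is connected by Theorem~\ref{connected}, some pair $V_i\neq V_j$ must meet along a component $W$ of $V_i\cap V_j$, of dimension $g-\delta-1$, that is \emph{not} contained in $\overline{V_{\delta+1}(Y,L)}$. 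A general $C\in W$ is therefore an integral curve of geometric genus exactly $g-\delta$ that is not $\delta$-nodal.

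The core of the proof is the classification of such a $W$. Writing $f\colon\widetilde C\to Y$ for the normalization of a general $C\in W$ and using that on the $K3$ surface the torsion-free normal sheaf satisfies $\overline N_f\simeq\omega_{\widetilde C}(-R)$, with $R$ the ramification divisor of $f$, the bound $\dim_{[C]}\overline{V^{g-\delta}(Y,L)}\leq h^0(\overline N_f)$ (applied as in Proposition~\ref{dimension}) together with $\dim W=g-\delta-1$ pins down the worst singularity of $C$. As in~\cite{CD2} one finds exactly two possibilities: either the singularities of $C$ consist of smooth branches, possibly mutually tangent, with at most one cusp; or $\widetilde C$ is hyperelliptic of genus $g-\delta$. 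In the first case the versal deformation of a cusp and of tangent smooth branches is irreducible, so $\overline{V_\delta(Y,L)}$ is unibranched along $W$; this contradicts $W\subseteq V_i\cap V_j$ with $V_i\neq V_j$, since two distinct components passing through a general point of $W$ would give at least two analytic branches there.

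The remaining hyperelliptic alternative is where the hypothesis $\delta\leq g-4$ is used, and I expect the local unibranchedness analysis of the cusp/tangency stratum to be the main obstacle, as it requires a careful deformation-theoretic argument rather than a dimension count. Curves in $|L|$ whose normalization is hyperelliptic of fixed geometric genus $\geq 2$ form a locus of dimension $2$ by~\cite[Rmk.~5.6]{KLM}; since $\delta\leq g-4$ yields $\dim W=g-\delta-1\geq 3>2$, the stratum $W$ cannot be of hyperelliptic type. Both alternatives being excluded, the locus $W$ cannot exist, contradicting the reducibility of $\overline{V_\delta(Y,L)}$; hence $k=1$ and $V_\delta(Y,L)$ is irreducible.
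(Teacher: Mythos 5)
Your proposal is correct and takes essentially the same route as the paper: the same reduction via Theorem \ref{connected} and Proposition \ref{two} to a codimension-one intersection component $W$ not contained in $\overline{V_{\delta+1}(Y,L)}$, the same classification of a general $C\in W$ through $\overline N_f\simeq \omega_{\tilde C}(-R)$ with $\deg R\leq 2$, and the same exclusion of the hyperelliptic case via \cite[Rmk. 5.6]{KLM} using $\delta\leq g-4$. The one point where you deviate --- unibranchedness along $W$ in the immersed/one-cusp case, which you justify by a loose appeal to irreducibility of equigeneric versal strata and yourself flag as the main obstacle --- is handled in the paper more directly: one computes $N_f\simeq\omega_{\tilde C}$ (resp. $N_f\simeq\omega_{\tilde C}(-Q)\oplus\mathcal O_Q$ for one cusp), so $h^0(N_f)=g-\delta$ and $[f]$ is a smooth point of $M_{g-\delta}(Y,L)^{\mathrm{sm}}$, and since $\mu$ is injective near $[f]$ (an integral curve of geometric genus exactly $g-\delta$ has its normalization as unique preimage), the image germ is unibranched, with no versality analysis of the singularities needed.
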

\begin{proof}
Since $(Y,L)$ is general, we may assume that all curves in $|L|$ are integral. By Theorem \ref{connected}, $\overline{V_{\delta}(Y,L)}$ and its relative normalization $\overline{V_{\delta}(Y,L)}^n$ along $\overline{V_{\delta+1}(Y,L)}$ are connected. If reducible, $\overline{V_{\delta}(Y,L)}$ contains two irreducible components $V,W$ whose intersection $V\cap W$ is nonempty and thus of codimension $1$ by Proposition \ref{two}. Since $\overline{V_{\delta}(Y,L)}^n$ is still connected, we may further assume that $V\cap W$ is not contained in $\overline{V_{\delta+1}(Y,L)}$. It is therefore enough to show that no codimension $1$ component of the singular locus of $\overline{V_{\delta}(Y,L)}$ may contain such an intersection.

Let $Z$ be a component of $\mathrm{Sing}\overline{V_{\delta}(Y,L)}$ such that $Z$ is not contained in $\overline{V_{\delta+1}(Y,L)}$ and $Z$ has codimension $1$. Let $C\in Z$ be a general point and denote by $f$ the composition of the normalization map $\nu:\tilde C\to C$ with the inclusion of $C$ in $Y$. Since $Z$ has dimension $g-\delta-1$, Theorem \ref{dense} and Proposition \ref{known} imply that $\tilde C$ is a smooth irreducible curve of genus either $g-\delta$ or $g-\delta-1$, and the latter case does not occur because $Z$ is not contained in $\overline{V_{\delta+1}(Y,L)}$  .

Hence, $\tilde C$ has genus $g-\delta$ and, by generality, we can assume that all points in a dense open subset of $Z$ parametrize curves with the same singularities as $C$. We may thus apply \cite[p. 26]{AC} as in the proof of Proposition \ref{dimension} to obtain
\begin{equation}\label{atene}
g-\delta-1=\dim Z\leq h^0(\overline N_f),
\end{equation}
where $\overline N_f\simeq \omega_{\tilde C}(-R)$ with $R$ being the ramification divisor of $f$. Inequality \eqref{atene} then yields $\deg R\leq 2$. 

If $\deg R=0$, then $f$ is unramified and $N_f=\ov N_f=\omega_{\tilde C}$,  If $\deg R=1$, then $C$ has only one ordinary cusp and, denoting by $Q$ the point of $\tilde C$ mapping to it, we have $N_f=\omega_{\tilde C}(-Q)\oplus \mathcal O_Q$. In both cases one has $h^0(N_f)=g-\delta$ and thus $f$ defines a smooth point of the moduli space of genus $g-\delta$ stable maps $M_{g-\delta}(Y,L)$, and more precisely of the locus $M_{g-\delta}(Y,L)^{\mathrm{sm}}$ parametrizing smoothable stable maps. Let $\mu$ be the morphism from the semi-normalization of $M_{g-\delta}(Y,L)^{\mathrm{sm}}$ to $\overline{V^{g-\delta}(Y,L)}=\overline{V_\delta(Y,L)}$. Locally around $f$ the morphism $\mu$ is injective: indeed, the inverse image under $\mu$ of an irreducible curve of geometric genus exactly $g-\delta$ is the only point defined by the composition of its normalization map with its inclusion in $S$. Therefore, the smoothness of  $M_{g-\delta}(Y,L)^{\mathrm{sm}}$ at $f$ yields that $\overline{V_\delta(S,L)}$, if singular at $C$, has a unibranched singularity there. In particular, $W$ cannot lie in the intersection of two irreducible components of $\overline{V_\delta(S,L)}$.

We now treat the remaining case $\deg R=2$, where \eqref{atene} implies that  $\tilde C$ is hyperelliptic and $R$ is a divisor in the $g^1_2$. By \cite[Rmk. 5.6]{KLM}, curves in $|L|$ with hyperelliptic normalization of any fixed geometric genus $\geq 2$ move in dimension $2$ and hence $g-\delta-1=\dim W\leq 2$ yielding a contradiction as soon as $\delta\leq g-4$. 
\end{proof}

\end{document}